\newcommand{\R}{\mathbbm{R}}
\newcommand{\C}{\mathbbm{C}}
\newcommand{\N}{\mathbbm{N}}
\newcommand{\Cheb}{\mathrm{Cheb}}
\newcommand{\Oc}{\mathcal{O}}
\newcommand{\ee}{\varepsilon}
\newcommand{\lo}{\longrightarrow}
\newcommand{\li}{\left}
\newcommand{\re}{\right}
\newcommand{\BLT}{\mathrm{BLT}}
\newtheorem{experiment}{\sc Experiment}
\begin{document}
\title{Multivariate Newton Interpolation in Downward Closed Spaces Reaches the Optimal Geometric Approximation Rates for Bos--Levenberg--Trefethen Functions}

\author{
Michael Hecht\thanks{Corresponding author. Email: m.hecht@hzdr.de}\textsuperscript{1,2,3},
Phil-Alexander Hofmann\textsuperscript{1,2},
Damar Wicaksono\textsuperscript{1,2},
Uwe Hernandez Acosta\textsuperscript{1,2},
Krzysztof Gonciarz\textsuperscript{6,7},
Jannik Kissinger,
Vladimir Sivkin\textsuperscript{4},
Ivo F. Sbalzarini\textsuperscript{5,6,7,8}
}

\affil{\textsuperscript{1}Center for Advanced Systems Understanding (CASUS), Görlitz, Germany}
\affil{\textsuperscript{2}Helmholtz-Zentrum Dresden-Rossendorf e.V. (HZDR)}
\affil{\textsuperscript{3}Mathematical Institute, University of Wrocław}
\affil{\textsuperscript{4}HSE University, Moscow, Russia}
\affil{\textsuperscript{5}Technische Universität Dresden, Faculty of Computer Science, Dresden, Germany}
\affil{\textsuperscript{6}Max Planck Institute of Molecular Cell Biology and Genetics, Dresden, Germany}
\affil{\textsuperscript{7}Center for Systems Biology Dresden, Dresden, Germany}
\affil{\textsuperscript{8}Center for Scalable Data Analytics and Artificial Intelligence (ScaDS.AI), Dresden, Germany}


\maketitle

\begin{abstract}
{We extend the univariate Newton interpolation algorithm to arbitrary spatial dimensions and for any choice of downward-closed polynomial space, while preserving its quadratic runtime and linear storage cost. The generalisation supports any choice of the provided notion of  non-tensorial unisolvent interpolation nodes, whose number coincides with the dimension of the chosen-downward closed space. Specifically, we prove that by selecting Leja-ordered Chebyshev-Lobatto or Leja nodes,
the optimal geometric approximation rates for a class of analytic functions---termed Bos--Levenberg--Trefethen functions---are achieved and extend to the derivatives of the interpolants.
In particular, choosing Euclidean degree results in downward-closed spaces whose dimension only grows sub-exponentially with spatial dimension, while delivering approximation rates close to, or even matching those of the tensorial maximum-degree case, mitigating the curse of dimensionality.
Several numerical experiments demonstrate the performance of the resulting multivariate Newton interpolation compared to state-of-the-art alternatives and validate our theoretical results.}
{Newton interpolation,  unisolvent nodes, analytic functions, geometric approximation, non-tensorial grids.}
\end{abstract}

\maketitle


\section{Introduction}
\label{intro}
Polynomial interpolation goes back to Newton, Lagrange, and others  \citep[see, e.g.,][]{LIP}, and its fundamental importance in mathematics and computing is undisputed.
Interpolation is based on the fact that, in 1D, one and only one polynomial $Q_{f,n}$ of degree $n$ can interpolate a function
$f : \R \lo \R$ in $n+1$ distinct \emph{unisolvent interpolation nodes} $P_n \subseteq \R$, $Q_{f,n}(p_i) =f(p_i)$ for all $p_i \in P_n$, $0\leq i \leq n$.
Though the famous \emph{Weierstrass approximation theorem} \citep{weier1}
states that any continuous function $f \in C^0(\square_m)$, $\square_m=[-1,1]^m$, $\|f\|_{C^0(\square_m)} = \sup_{x \in \square_m}|f(x)|<\infty$ can be uniformly approximated by
polynomials, this does not necessarily apply for interpolation. In contrast to interpolation, the Weierstrass approximation theorem does not require the polynomials to coincide with $f$ anywhere, meaning there is a sequence of polynomials $Q_{f,n}$ with $Q_{f,n}(x) \not = f(x)$ for all $x\in \square_m$, but still
\begin{equation}
Q_{f,n} \xrightarrow[n \rightarrow \infty]{} f \quad \text{uniformly on}\,\,\, \square_m\,.
\end{equation}
There are several constructive proofs of the Weierstrass approximation theorem, including the prominent version given by Serge~\citet{bernstein1912}. Although the resulting Bernstein approximation scheme
is \emph{universal} (i.e., approximating any continuous function) and has been proven to reach the optimal (inverse-linear) approximation rate for the absolute value function $f(x) = |x|$ \citep{bernstein1914},
it achieves only slow convergence rates for analytic functions, resulting in a high computational cost in practice.

In contrast, interpolation in \emph{Chebyshev, Legendre, Pad\'e, or Leja nodes} \citep{Bos2010,trefethen2019} is known to be \emph{non-universal}~\citep{faber},
but ensures the approximation of Lipschitz continuous functions---Runge's overfitting phenomenon completely disappears---with exponential
approximation rates appearing for analytic functions~\citep{trefethen2019,chifka2013}.

There has thus been much research into multi-dimensional ($m$D) extensions of one-dimensional (1D) interpolation schemes and their approximation capabilities. While multivariate \(C^k\) smooth functions can be approximated at a maximal algebraic rate of \(\Oc(n^{-k/m})\) \citep{Devore1989, novak2}, we extend the discussion based on the results of \cite{bos2018bernstein,trefethen2017a}, addressing the question of which multivariate function class can be approximated by polynomials with a geometric rate.

\subsection{Bos--Levenberg--Trefethen functions}

Consider the multi-index sets
$A_{m,n,p} = \li\{\alpha \in \N^m :  \|\alpha\|_p \leq n \re\} \subseteq \N^m$ of bounded $l_p$-norm and the induced polynomial spaces
$\Pi_{m,n,p} = \mathrm{span}\{x^\alpha = x^{\alpha_1}\cdots x^{\alpha_m}\}_{ \alpha \in A_{m,n,p}}$, generalising the notion of polynomial degree
to multi-dimensional $l_p$-degree, with \emph{total degree}, \emph{Euclidean degree}, and \emph{maximum degree} appearing for the choice of $p=1,2,\infty$, respectively.

Assume that a given continuous function $f : \square_m \lo \R$ on the hypercube $\square_m = [-1,1]^m$, possesses
a Chebyshev series expansion (holding true for any Lipschitz continuous function \citep[Theorem~4.1]{mason1980})
\begin{equation}\label{eq:cheb}
 f(x) = \sum_{\alpha \in \N^m}c_\alpha T_\alpha(x)\,,
 \quad c_\alpha = \li<\omega f,T_{\alpha}\re>_{L^2(\square_m)} = \int_{\square_m} \omega(x)f(x) T_\alpha(x) dx\,,
\end{equation}
where $T_\alpha(x) = \prod_{i=1}^m T_{\alpha_i}(x_i)$ is the product of the univariate Chebyshev polynomials of order $\alpha_i$, the coefficients $c_\alpha$ are given by the orthogonal, $\omega$-weighted $L^2$-projection, where $\omega(x) = 2^{m-a}/\pi^m\prod_{i=1}^m (1-x_i^2)^{-1/2}$, with $a$ being the number of zero entries of $\alpha$  \citep{trefethen2019}. Then, the truncation of the Chebyshev series to $\Pi_{m,n,p}$ can reach the following approximation rates:

\begin{theorem}[\cite{trefethen2017a}]
Given a continuous function $f : \square_m \lo \R$ satisfying Eq.~\eqref{eq:cheb}, assume that $f$ possesses an analytic (holomorphic) extension to the \emph{Trefethen domain}
\begin{equation}\label{Tdomain}
   N_{m,\rho} = \li\{ (z_1,\dots,z_m) \in \C^m :  (z_1^2 + \cdots  + z_m^2) \in E_{m,h^2}^2 \re\}\,, \quad m \in \N\,,
 \end{equation}
where
$E_{m,h^2}^2$ denotes the \emph{Newton ellipse} with foci $0$ and $m$ and leftmost point $-h^2$, $h \in [0,1]$.
Setting $\rho = h + \sqrt{1 + h^2}$,
the following \emph{upper bounds} on the convergence rate of the truncation
$\mathcal{T}_{A_{m,n,p}}(f)  =\sum_{\alpha \in A_{m,n,p}}c_\alpha T_{\alpha}\in \Pi_{m,n,p}$
apply:
\begin{equation}\label{eq:rate}
  \| f - \mathcal{T}_{A_{m,n,p}}(f)\|_{C^0(\square_m)} =
  \begin{cases}
    \mathcal{O}_\epsilon(\rho^{-n/\sqrt{m}}) & ,\, p = 1 \\
    \mathcal{O}_\epsilon(\rho^{-n}) & ,\, p = 2 \\
    \mathcal{O}_\epsilon(\rho^{-n}) & ,\, p = \infty \, ,
  \end{cases}
\end{equation}

where  $g \in \Oc_\ee(\rho^{-n})$ if and only if $g \in \Oc((\rho-\ee)^{-n})$,  $\forall \rho > \ee >0$.
\label{theo:Nick}
\end{theorem}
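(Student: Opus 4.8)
\medskip\noindent\emph{Proof plan.} The plan is to reduce the whole statement to one sharp estimate on the Chebyshev coefficients, measured in the \emph{Euclidean} multi-index norm: for every $\rho'\in(1,\rho)$ there should exist $C_{\rho'}<\infty$ with
\[
|c_\alpha|\ \le\ C_{\rho'}\,(\rho')^{-\|\alpha\|_2},\qquad \|\alpha\|_2:=\Big(\sum_{j=1}^m\alpha_j^2\Big)^{1/2},\quad\forall\,\alpha\in\N^m .
\]
Granting this, since $\|T_\alpha\|_{C^0(\square_m)}=1$ one has $\|f-\mathcal{T}_{A_{m,n,p}}(f)\|_{C^0(\square_m)}\le\sum_{\|\alpha\|_p>n}|c_\alpha|\le C_{\rho'}\sum_{\|\alpha\|_p>n}(\rho')^{-\|\alpha\|_2}$, and the three cases follow by counting lattice shells: for $p=2,\infty$ we use $\|\alpha\|_2\ge\|\alpha\|_p$, so the tail is $O\big(\sum_{k>n}k^{m-1}(\rho')^{-k}\big)$ because there are $O(k^{m-1})$ multi-indices in a shell $\|\alpha\|_2\in[k,k+1)$; for $p=1$ we use $\|\alpha\|_2\ge\|\alpha\|_1/\sqrt m$, so the tail is $O\big(\sum_{k>n}k^{m-1}(\rho'^{1/\sqrt m})^{-k}\big)$, as there are $O(k^{m-1})$ multi-indices with $\|\alpha\|_1=k$. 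Letting $\rho'\uparrow\rho$ and absorbing the polynomial prefactors into the definition of $\Oc_\ee$ then yields exactly the stated rates $\Oc_\ee(\rho^{-n/\sqrt m})$, $\Oc_\ee(\rho^{-n})$, $\Oc_\ee(\rho^{-n})$.

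For the coefficient estimate I would pass to the Fourier side. Substituting $x_j=\cos\theta_j$ in \eqref{eq:cheb} and using that $f(\cos\theta_1,\dots,\cos\theta_m)\prod_j\cos(\alpha_j\theta_j)$ is even and $2\pi$-periodic in each $\theta_j$, the coefficient becomes $c_\alpha=2^{-a}\pi^{-m}\int_{[-\pi,\pi]^m}f(\cos\theta_1,\dots,\cos\theta_m)\,e^{\mathrm i\sum_j\alpha_j\theta_j}\,d\theta$, where $a$ is the number of zero entries of $\alpha$. For a shift vector $s=(s_1,\dots,s_m)\in\R_{\ge0}^m$ let $D_s:=\prod_{j=1}^m\overline{\mathcal D(e^{s_j})}$ be the product of the closed Bernstein-ellipse regions of parameters $e^{s_j}$, which is precisely the image of the polytube $\{|\mathrm{Im}\,\theta_j|\le s_j\ \forall j\}$ under $\theta\mapsto(\cos\theta_j)_j$. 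If $D_s\subseteq N_{m,\rho}$, the integrand (now the holomorphic extension of $f$) is holomorphic and $2\pi$-periodic in each variable on that polytube, so Cauchy's theorem permits the substitution $\theta\mapsto\theta+\mathrm i s$ (the boundary terms at $\theta_j=\pm\pi$ cancel by periodicity), giving $|c_\alpha|\le 2^{m}\,e^{-\sum_j\alpha_j s_j}\,\sup_{D_s}|f|$. The remaining problem is purely geometric: choose an \emph{admissible} shift $s$ (one with $D_s\subseteq N_{m,\rho}$) maximising $\sum_j\alpha_j s_j$, ideally $\ge(\sinh^{-1}h)\,\|\alpha\|_2$, since $\rho=h+\sqrt{1+h^2}=e^{\sinh^{-1}h}$.

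To characterise admissibility, note $z\mapsto z^2$ maps $\overline{\mathcal D(e^{s_j})}$ onto the filled ellipse $\mathcal E_j$ with centre $\tfrac12$ and semi-axes $\tfrac12\cosh 2s_j$ (real direction), $\tfrac12\sinh 2s_j$; hence $\{\sum_j z_j^2:z\in D_s\}$ is the Minkowski sum $\sum_j\mathcal E_j$, and $D_s\subseteq N_{m,\rho}\iff\sum_j\mathcal E_j\subseteq E^2_{m,h^2}$. Comparing support functions (the centres add to $m/2$ on both sides and cancel) and using $\tfrac14\cosh^2 2s_j-\tfrac14\sinh^2 2s_j=\tfrac14$ and $a^2-b^2=\tfrac{m^2}{4}$ for the Newton ellipse (semi-axes $a=\tfrac m2+h^2$, $b^2=h^2(m+h^2)$), this containment rewrites as $\sum_{j=1}^m\sqrt{\xi_j^2-\delta}\le m\sqrt{(\xi^\ast)^2-\delta}$ for all $\delta\in[0,1]$, with $\xi_j:=1+2\sinh^2 s_j\ge1$ and $\xi^\ast:=1+\tfrac{2h^2}{m}$. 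Because $\xi\mapsto\sqrt{\xi^2-\delta}$ is concave on $\{\xi\ge\sqrt\delta\}$, Jensen's inequality collapses this entire family to the single inequality $\tfrac1m\sum_j\xi_j\le\xi^\ast$, i.e. $\sum_{j=1}^m\sinh^2 s_j\le h^2$. Now fix $h'<h$ and take $s_j:=\sinh^{-1}\!\big(h'\alpha_j/\|\alpha\|_2\big)$ for $\alpha\ne0$: then $\sum_j\sinh^2 s_j=(h')^2<h^2$, so $s$ is admissible; all such $D_s$ lie in the $\alpha$-independent compact set $\overline{\mathcal D\big(h'+\sqrt{1+(h')^2}\big)}^{\,m}\cap\{z:\sum_j z_j^2\in\overline{E^2_{m,(h')^2}}\}\Subset N_{m,\rho}$, so $\sup_{D_s}|f|$ is bounded uniformly in $\alpha$; and since $\sinh^{-1}$ is concave with $\sinh^{-1}0=0$ (hence $\sinh^{-1}(\lambda t)\ge\lambda\sinh^{-1}t$ for $\lambda\in[0,1]$),
\[
\sum_{j=1}^m\alpha_j s_j=\sum_{j=1}^m\alpha_j\,\sinh^{-1}\!\Big(h'\,\tfrac{\alpha_j}{\|\alpha\|_2}\Big)\ \ge\ \big(\sinh^{-1}h'\big)\sum_{j=1}^m\tfrac{\alpha_j^2}{\|\alpha\|_2}=\big(\sinh^{-1}h'\big)\,\|\alpha\|_2 .
\]
Hence $|c_\alpha|\le C_{h'}\,\big(h'+\sqrt{1+(h')^2}\big)^{-\|\alpha\|_2}$, and letting $h'\uparrow h$ gives the coefficient bound, closing the argument.

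The step I expect to be the main obstacle is the geometric one of the last paragraph. The point with no one-dimensional counterpart is that the contour shift must be \emph{tilted along the multi-index} $\alpha$: an isotropic choice $s_1=\dots=s_m$ is admissible only up to $s_j=\sinh^{-1}(h/\sqrt m)$, yielding the base $h/\sqrt m+\sqrt{1+h^2/m}$, which is strictly smaller than $\rho$ and so misses the $p=2,\infty$ rates. Proving that the tilted polyellipse still fits inside the Trefethen domain is where the real work lies; reducing the one-parameter family of support-function inequalities to the single estimate $\sum_j\sinh^2 s_j\le h^2$ through concavity of $\xi\mapsto\sqrt{\xi^2-\delta}$ is the clean route. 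Everything else — the passage to the Fourier integral, the contour deformation, and the shell-counting tail bounds, with the limit $h'\uparrow h$ and the polynomial lattice factors handled by $\Oc_\ee$ — is routine.
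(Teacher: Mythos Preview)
The paper does not supply its own proof of this theorem; it is quoted as a result from \cite{trefethen2017a} and used as motivation for the BLT framework. Your argument is correct and is essentially the one in Trefethen's original article: obtain the Euclidean-norm coefficient decay $|c_\alpha|\le C_{\rho'}(\rho')^{-\|\alpha\|_2}$ by an $\alpha$-dependent contour shift in a product of Bernstein ellipses, then sum the tails over $\N^m\setminus A_{m,n,p}$ using the norm comparisons $\|\alpha\|_2\ge\|\alpha\|_\infty$ and $\|\alpha\|_2\ge\|\alpha\|_1/\sqrt m$. Your treatment of the only genuinely multivariate step---the admissibility of the anisotropic shift---is clean: reducing the support-function containment of the Minkowski sum $\sum_j\mathcal E_j$ in the Newton ellipse to the single scalar constraint $\sum_j\sinh^2 s_j\le h^2$ via the concavity of $\xi\mapsto\sqrt{\xi^2-\delta}$ is exactly the right mechanism, and the choice $s_j=\sinh^{-1}(h'\alpha_j/\|\alpha\|_2)$ together with the concavity bound $\sinh^{-1}(\lambda h')\ge\lambda\sinh^{-1}h'$ then delivers $\sum_j\alpha_js_j\ge(\sinh^{-1}h')\|\alpha\|_2$. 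The uniform-in-$\alpha$ bound on $\sup_{D_s}|f|$ via the fixed compact $\overline{\mathcal D(\rho')}^{\,m}\cap\{\sum_jz_j^2\in\overline{E^2_{m,(h')^2}}\}\Subset N_{m,\rho}$ is also handled correctly. Nothing is missing.
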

Note that the number of coefficients for total degree interpolation $|A_{m,n,1}| = \binom{m+n}{n} \in \Oc(m^n) \cap \Oc(n^m)$ scales polynomially,
for Euclidean degree $|A_{m,n,2}|\approx \frac{(n+1)^m }{\sqrt{\pi m}} \li(\frac{\pi \mathrm{e}}{2m}\re)^{m/2} \in o(n^m)$ scales sub-exponentially, whereas
for maximum degree $|A_{m,n,\infty}| = (n+1)^m$ scales exponentially with the dimension $m \in \N$. Consequently, in case the exponential rate, Eq.~\eqref{eq:rate}, applies, approximating  functions with respect to Euclidean degree might resist the curse of dimensionality, while approximation with total or maximum degree results to be sub-optimal.

This observation motivated \citet{trefethen2017a} to conjecture the converse statement to hold:
If a function $f : \square_m \lo \R$ possesses a polynomial approximation of exponential approximation rate $\Oc(\rho^{-n})$, then it can be analytically extended to $N_{m,\rho}$. By generalising Bernstein--Walsh theory to functions $f: K \lo \C$ defined on \emph{PL-regular, compact domains} $K \subseteq \C^m$ (including the case $K = \square_m$) \citet{bos2018bernstein} extended Trefethen's statement, in particular proving a refined version of the conjecture:


\begin{theorem}[\cite{bos2018bernstein}]\label{theo:BL} Let $K \subseteq \C^m$, $m \in \N$, be compact and PL-regular, $f: K \lo \C$ be continuous.
  Denote by $\Pi(nP)$ the polynomial space induced by a \emph{convex body} $P \subseteq \R^{m,+}$
  (including the cases $\Pi_{m,n,p}$). Let $\rho>1$ and $\Omega_{\rho(P,K)}:= \{z \in \C^m : V_{P,K}(z) < \log(\rho)\}$, where $ V_{P,K}(z) = \lim_{n\rightarrow\infty} \sup_{p\in \Pi(nP)}\{\frac{1}{n}\log|p(z)| :  \|p\|_{C^0(K)} \leq 1\}$.


  \begin{enumerate}[label=\roman*)]
    \item If $f$ is the restriction to $K$ of a function holomorphic in $\Omega_\rho(P,K)$, then
\begin{equation}\label{eq:bestR}
   \|f - p_n^*\|_{C^0(K)}  \lesssim \rho^{-n}\,,
\end{equation}
where $p_n^* \in \Pi(nP)$, with $\|f - p_n^*\|_{C^0(K)}=\inf_{p_n \in \Pi(nP)} \|f - p_n\|_{C^0(K)}$, denotes the \emph{best approximation} of $f$ in $\Pi(nP)$.
    \item If $  \|f - p_n^*\|_{C^0(K)}  \lesssim \rho^{-n}$, then $f$ is the restriction to $K$ of a function holomorphic in $\Omega_{\rho(P,K)}$.
  \end{enumerate}
  \label{theo:bos}
\end{theorem}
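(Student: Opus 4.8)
The plan is to read this as a Bernstein--Walsh-type equivalence governed entirely by the weighted pluricomplex Green function $V_{P,K}$, and to reduce both implications to a single elementary inequality. First I would record three facts about $V_{P,K}$, taken as black boxes from (convex-body) pluripotential theory: (a) $V_{P,K}$ is plurisubharmonic and nonnegative on $\C^m$, vanishes on $K$, and --- because $K$ is PL-regular --- is continuous on all of $\C^m$, so $\Omega_\rho(P,K)=\{z:V_{P,K}(z)<\log\rho\}$ is open and contains $K$; (b) since the convex body $P$ has nonempty interior, $V_{P,K}$ has logarithmic growth of $P$-support-function type at infinity, so each set $\overline{\Omega_\sigma(P,K)}$ is compact; (c) the \emph{Bernstein--Walsh inequality} $|p(z)|\le\|p\|_{C^0(K)}\exp(nV_{P,K}(z))$ for every $p\in\Pi(nP)$ and every $z\in\C^m$, which is immediate from the displayed definition of $V_{P,K}$ as a limit of suprema over normalised elements of $\Pi(nP)$. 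I would also use the nesting $\Pi((n-1)P)\subseteq\Pi(nP)$.

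I would prove the converse (ii) first, as it is the soft direction. Assuming $\|f-p_n^*\|_{C^0(K)}\lesssim\rho^{-n}$, fix $\sigma\in(1,\rho)$; then $d_n:=p_n^*-p_{n-1}^*\in\Pi(nP)$ and $\|d_n\|_{C^0(K)}\le\|f-p_n^*\|_{C^0(K)}+\|f-p_{n-1}^*\|_{C^0(K)}\lesssim\sigma^{-n}$. Applying (c) to the telescoping series $f=p_0^*+\sum_{n\ge1}d_n$ shows that, on each compact $\overline{\Omega_\tau(P,K)}$ with $1<\tau<\sigma$, the $n$-th term is bounded by a constant times $(\tau/\sigma)^n$; hence the series converges uniformly on such sets to a holomorphic function on $\Omega_\sigma(P,K)$ that restricts to $f$ on $K$. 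Letting $\sigma\uparrow\rho$, and using the identity theorem on the connected open set $\Omega_\rho(P,K)\supseteq K$ (with $K$ non-pluripolar) to see the extensions are consistent, yields a holomorphic extension of $f$ to $\Omega_\rho(P,K)$.

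For the forward direction (i) --- the substantive part --- I would fix $\sigma\in(1,\rho)$, so that $f$ is holomorphic on the compact set $\overline{\Omega_\sigma(P,K)}$ by (b), and construct near-best polynomials explicitly. Taking Fekete-type arrays $F_n\subset K$ unisolvent for $\Pi(nP)$, one has (via the $P$-transfinite-diameter / Zakharyuta-conjecture asymptotics for such arrays) that the associated Lagrange fundamental functions $\ell_{n,j}$ satisfy $\tfrac1n\log\sum_j|\ell_{n,j}(z)|\to V_{P,K}(z)$ locally uniformly, while the interpolation remainder $f-L_nf$ admits a Cauchy--Hermite-type integral representation over the level set $\partial\Omega_\sigma(P,K)$ with kernel controlled by $\exp(n(V_{P,K}(z)-V_{P,K}(\zeta)))$. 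For $z\in K$ this gives $\|f-L_nf\|_{C^0(K)}\le C_n\sigma^{-n}$ with $C_n^{1/n}\to1$, so $\limsup_n\|f-p_n^*\|_{C^0(K)}^{1/n}\le\sigma^{-1}$; since $\sigma<\rho$ was arbitrary, the bound $\|f-p_n^*\|_{C^0(K)}\lesssim\rho^{-n}$ follows (the subexponential factors being absorbed precisely through the slack $\sigma<\rho$).

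The main obstacle is the pluripotential-theoretic input behind (i): establishing continuity of $V_{P,K}$ from PL-regularity, the transfinite-diameter asymptotics that pin down the growth of the Lagrange functions of Fekete-type arrays up to subexponential error, and the Hermite-type integral representation of the interpolation remainder --- in short, the full analogue, for the non-tensorial degree induced by the convex body $P$, of the classical Siciak--Zakharyuta and Walsh--Bernstein machinery. Once that is in place, both inclusions reduce, as above, to the Bernstein--Walsh inequality (c) via soft arguments.
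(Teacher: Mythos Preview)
The paper does not prove this theorem at all: it is quoted verbatim from \cite{bos2018bernstein} as background, used only to define the BLT class and to feed into Theorem~\ref{theo:APP}. There is therefore no ``paper's own proof'' to compare your sketch against.

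That said, your outline is essentially the standard Bernstein--Walsh argument adapted to the convex-body setting, and direction~(ii) is exactly how it is done in the cited reference. For direction~(i), however, the route you propose --- Fekete-type arrays, Lagrange interpolation, and a ``Cauchy--Hermite-type integral representation'' of the remainder over $\partial\Omega_\sigma$ --- is more fragile than you let on. In several complex variables there is no clean analogue of the one-variable Hermite remainder formula with a contour integral over a single level set; the multivariate kernel you would need does not exist in that form. The argument in Bos--Levenberg (following Siciak and Zaharjuta) instead goes through the identification of $V_{P,K}$ with the $P$-Siciak extremal function $\log\Phi_{P,K}(z)=\limsup_n\max\{|p(z)|^{1/n}:p\in\Pi(nP),\ \|p\|_K\le1\}$ and constructs near-best approximants more directly, without an interpolation remainder formula. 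A Fekete-point approach can be pushed through, but it then rests on the Bloom--Levenberg asymptotics for $P$-Fekete arrays (equidistribution and subexponential Lebesgue constants), which is itself a substantial result rather than a lemma one invokes in passing. Your closing paragraph correctly flags exactly this as the hard part, so the diagnosis is right even if the specific mechanism you name for~(i) would not survive as written.
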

While in the hypercube $K=\square_m$ we show the \emph{best approximation} of exponential rate to induce geometric \emph{near-best interpolation}, Theorem~\ref{theo:APP},
Theorems~\ref{theo:Nick}~and~\ref{theo:bos} motivate us to define the following function class:


\begin{definition}[Bos--Levenberg--Trefethen functions] Let $K \subseteq \C^m$, $m \in \N$, be compact and PL-regular, we call the class of functions $\BLT(K) \subseteq C^0(K,\C)$ that are
restrictions of functions being holomorphic in $\Omega_\rho(P,K)$, $\rho >1$,
    \emph{Bos-Levenberg-Trefethen (BLT)-functions}.
\end{definition}
\begin{remark}\label{rem:BLT}
In contrast to the previously introduced unbounded Trefethen domain $N_{m,\rho}$, BLT-functions only need to be holomorphic in the bounded pre-compact
\emph{Bos--Levenberg domain} $\Omega_\rho(P,K) \subset\subset \C^m$. Containing Trefethen's former notion and all entire functions, the BLT functions are a large function class, covering many approximation tasks that frequently arise in applications. However, this comes at the cost that Euclidean-degree approximation will not always deliver the same rate as maximum-degree approximation, as it holds for functions $f$ being restrictions of a function holomorphic in $N_{m,\rho}$.
\end{remark}

We illustrate Remark~\ref{rem:BLT}:
Throughout this article, we focus on the case $K = \square_m$, for which the famous
\emph{Runge function}
\begin{equation}\label{eq:Runge}
f : \square_m \lo \R\,, \quad f(x) = \frac{1}{s^2 + r^2\|x\|^2}\,, \quad r,s  \neq 0\,,
\end{equation}
is a BLT function.
As a consequence of Theorem~\ref{theo:bos}, \cite{bos2018bernstein} explicitly proved the approximation rates in Eq.~\eqref{eq:rate} to apply, $ \|f - p_n^*\|_{C^0(K)}  \lesssim \rho^{-n}$, with
\begin{equation}\label{eq:Rbounds}
\rho= \begin{cases}
\frac{h + \sqrt{h^2 + m}}{\sqrt{m}} & \text{if } p = 1 \\
h + \sqrt{h^2 + 1} & \text{if } 2 \leq p \leq \infty
\end{cases}\,, \quad  h = \frac{s}{r}\,,
\end{equation}
indeed showing the choice of Euclidean degree to be optimal.

However, as aforementioned, the choice of Euclidean degree is not optimal for all BLT functions. \cite{bos2018bernstein} computed $\rho_2 = 2.0518 < \rho_\infty = 2.1531$ for the $l_p$-degree choices $p=2,\infty$, respectively, in the case of the shifted Runge function
\begin{equation}\label{eq:BLT}
f : \square_2 \lo \R \,, \quad   f(x,y) = \frac{1}{(x-a)^2 + (y-a)^2} \,, \quad a =5/4\,.
\end{equation}

In light of these facts, two questions arise:

\begin{enumerate}[left=0pt,label=\textbf{Q\arabic*)}]
\item\label{Q1} How to stably and efficiently compute near-best polynomial approximations $f \approx Q_f \in \Pi(nP)$, by sampling $f$ at only
$\dim \Pi(nP)$-many nodes?

Though \citet{trefethen2017a} demonstrated the optimal Euclidean approximation rate to apply for the Runge function (in the 2D case $m=2$), this was realized by least-square regression on a fine grid, not answering this question.

\item \label{Q2}Given a BLT function $f : \square_m \lo \R$, how to identify the polynomial space $\Pi(nP)$ such that the relative rate
\begin{equation}
  \|f-Q_{f,n}\|_{C^0(\square_m)}\frac{ \dim \Pi(nP)}{\dim \Pi_{m,n,\infty}}
\end{equation}
emerges as optimal among the potential choices? This question was already raised by \cite{cohen3}.
\end{enumerate}

We next detail our contribution, addressing these questions, in relation to former approaches.

\subsection{Related work and contribution}


While \emph{tensorial Chebyshev interpolation} is a well-established interpolation scheme, as for example realised in the prominent MATLAB package {\sc chebfun} \citep{chebfun},
it is limited to the maximum-degree case and, so far, only implemented up to dimension $m=3$, reflecting its non-resilience to the curse of dimensionality. Sparse tensorial interpolation as proposed by \cite{kuntz,Guenther,sauertens,dyn},
delivers high-dimensional function approximations efficiently. However, it does not apply for the present general definition of the spaces $\Pi(nP)$.

If $\Pi(nP) = \Pi_{A}=\mathrm{span}\{x^\alpha : \alpha \in A\}$, with $A$ being \emph{downward closed}, interpolation in \emph{Leja points} or more general \emph{nested node sets} has been proposed by \cite{cohen2,cohen3}.
Further studies of the resulting Lebesgue constants and approximation power were provided by \cite{chifka2013,Narayan2014,Nobile2014,griebel2016tensor}.

However, the underlying interpolation algorithms require super-quadratic $\Omega(|A|^2)$ up to cubic runtime $\mathcal{O}(|A|^3)$. The resulting high computational cost might be dominated by the high sampling costs of the function $f: \square_m \to \R$, as presumed in the case of parametric PDEs by \cite{cohen2}. As a result, applications are hampered by the runtime and storage requirements, limiting the addressable dimension and instance sizes. This might be the reason why, apart from the maximum-degree case, none of those approaches has yet been demonstrated to reach the optimal approximation rates for BLT functions, especially not in dimensions $m \geq 4$.

Our contribution focuses both on resolving the algorithmic issues and on achieving optimal approximation power:

\begin{enumerate}[left=0pt,label=\textbf{C\arabic*)}]
\item \label{CC1} By extending our previous work \citep{PIP1,PIP2,MIP}, we contribute to solving the interpolation task for arbitrary downward closed polynomial spaces $\Pi_A=\mathrm{span}\{x^\alpha : \alpha \in A\}$, $A\subseteq \N^m$, by delivering a \emph{multivariate (Newton) interpolation algorithm (MIP)} of \emph{quadratic runtime}, $\mathcal{O}(|A|^2)$, and \emph{linear storage}, $\mathcal{O}(|A|)$, Theorem~\ref{theo:MIP}.

Hereby, MIP samples the function $f: \square_m \to \R$ solely in \emph{unisolvent non-tensorial interpolation nodes} $P_A\subseteq \square_m$ of size $|P_A| = \dim P_A = |A|$, answering \ref{Q1}. In particular, MIP is highly flexible in choosing a particular set of unisolvent nodes, relaxing former stiffer implementations such as {\sc chebfun} \citep{chebfun}.

\item \label{CC2} In Theorem~\ref{theo:APP}, we prove that the geometric rate of the best approximation of a BLT-function and its derivatives extends to the interpolant in suitable grids, Lemma~\ref{lemma:LEB}, such as \emph{Leja point grids (LP nodes)} $$\| f - Q_{f,n} \|_{C^k(\square_m)} = \mathcal{O}_{\ee}(\rho^{-n})\,, \quad k \in \N\,.$$

\item \label{CC3} While \emph{barycentric Lagrange interpolation} \citep{berrut,trefethen2019} is known as a pivotal choice in 1D, enabling numerically stable interpolation up to degree $n \approx 1.000.000$,
in $m$D only degrees up to $n \approx 1000$ might be computable.

That is why MIP extends 1D Newton interpolation to $m$D, which is known to be stable in this range of degrees for Leja-ordered nodes \citep{tal1988high}.

Apart from its numerical stability, we empirically demonstrate that, when choosing LP nodes or \emph{Leja-ordered Chebyshev--Lobatto nodes (LCL nodes)}, MIP reaches the optimal approximation rates for several BLT functions. Hereby, the Euclidean degree ($p = 2$) emerges as pivotal choice for mitigating the curse of dimensionality, which provides at least an empirical answer to \ref{Q2}.

Moreover, we prove that posterior evaluation and $k$-th order differentiation of MIP-interpolants can be realised efficiently in $\mathcal{O}(m|A|)$ and $\mathcal{O}(mn^k|A|)$, respectively (Theorem~\ref{theo:DIFF}) and we numerically demonstrate the maintenance of the optimal geometric rates for up to $2$-nd order derivatives.

\end{enumerate}

\subsection{Notation}\label{sec:NOT}
\begin{table}[!hpt]
\centering
\renewcommand{\arraystretch}{1.15}
\setlength{\tabcolsep}{4pt}
\small
\begin{tabular}{|c|l||c|l||c|l|}
\hline
$\square_m$ & $m$-dimensional hypercube & $\Pi_A$, $\Pi_{m,n,p}$ & polynomial space & $n$ & polynomial degree \\
$A$, $A_{m,n,p}$ &  multi-index set &
$\alpha, \beta \in A $ & multi-indices  & $i,j,k$ & indices, integers \\  $|\cdot|$ &  cardinality &
 $\|\cdot\|_p$ & \(\ell^p\)-norm  & $\text{span}$ & linear hull \\
  $C^{k}(\square_m)$ &  space of  differentiable functions  & $\quad\|\cdot\|_{C^k(\square_m)}$ & $C^k$-norm  &
 $e_i$ & standard basis \\
 $P_A$ & unisolvent nodes & $\Lambda$ & Lebesgue constant & $\lesssim$ & asymptotically smaller
 \\
\hline
\end{tabular}
\caption{Notation used throughout the article.}
\label{tab:notation}
\end{table}

Let $m,n \in \N$, $p>0$. Throughout this article, $\square_m=[-1,1]^m$ denotes the $m$-dimensional \emph{standard hypercube}.
We denote by  $A_{m,n,p} \subseteq \N^m$ all multi-indices $\alpha =(\alpha_1,\dots,\alpha_m)\in \N^m$ with $l_p$-norm $\|\alpha\|_p  \leq n$, $1\leq p \leq \infty$.
We order a finite set $A\subseteq \N^m$, $m \in \N$, of multi-indices with respect to the lexicographical order $\leq_L$ on $\N^m$ proceeding from the last entry to the first, e.g.,
$(5,3,1)\leq_L (1,0,3) \leq_L (1,1,3)$.
A multi-index set $A \subseteq \N^m$ is called
\emph{downward closed} (also termed \emph{monotone} or \emph{lower set}) if and only if
$\alpha = (a_1,\dots,a_m) \in A$ implies $\beta = (b_1,\dots,b_m) \in A$
whenever $b_i \leq a_i$, $ \forall \,  i=1,\dots,m$.
The sets $A_{m,n,p}$ are downward closed for all $m,n\in \N$, $p>0$ and induce the generalised notion of \emph{polynomial $l_p$-degree}.


We denote by $\Pi_m$ the $\R$-\emph{vector space of all real polynomials} in $m$ variables.
For $A\subseteq \N^m$, $\Pi_{A} \subseteq \Pi_m$ denotes the \emph{polynomial subspace} $\Pi_A = \mathrm{span}\{x^\alpha\}_{\alpha \in A}$ spanned by the \emph{canonical} \emph{basis}, whereas \emph{total degree} $A = A_{m,n,1}$, \emph{Euclidean degree} $A= A_{m,n,2}$, and \emph{maximum degree}
$A= A_{m,n,\infty}$ are of particular interest. We abbreviate $\Pi_{m,n,p} = \Pi_{A_{m,n,p}}$.

With $C^0(\square_m)$ we denote the
\emph{Banach space}
of continuous functions $f : \square_m \lo \R$ with norm
\linebreak
$\|f\|_{C^0(\square_m)} = \sup_{x \in \square_m}|f(x)|$ and with $C^k(\square_m)$, $k \in \N$,
$\|f\|_{C^k(\square_m)} =  \sum_{\beta \in A_{m,n,1}}\|\partial_\beta f\|_{C^0(\square_m)}$, $\partial_\beta f (x) = \partial^{l}_{x_1^{\beta_1}\cdots x_m^{\beta_m}}f(x)$, $\|\beta\|_1=l \leq k$, the Banach space of functions continuously differentiable in the interior of $\square_m$ up to $k$-th order.

Further notation is summarised in Table \ref{tab:notation}.

%


\section{The notion of unisolvence}
\label{sec:UN}

Essential for polynomial interpolation is the uniqueness of the interpolant $Q_{f,A} \in \Pi_A$, $Q_{f,A} (p_\alpha) = f(p_\alpha)$, $ \forall \alpha \in A \subseteq \N^m$, of a function
$f : \R^m \lo \R$. Interploation nodes $P_A\subseteq R^m$ guaranteeing the uniqueness are called \emph{unisolvent nodes} with respect to $\Pi_A$. Equivalently, unisolvent nodes $P_A$ exclude the existence of a non-zero polynomial $Q \in \Pi_A\setminus \{0\}$ vanishing on $P_A$, $Q(p_\alpha) =0$, $\forall \alpha \in A$.

The pioneering works of \citet{kuntz} and \citet{Guenther} with extensions by \citet{Chung} proposed constructions of unisolvent nodes $P_A \subseteq \square_m$
for the cases $A= A_{m,n,1}, A_{m,n,\infty}$. An explicit extension to the case of arbitrary downward closed spaces $\Pi_A$ has been given by \citet{cohen2,cohen3}.
Here, we provide a more general construction leading directly to a notion of unisolvence that permits implementing the initially announced MIP-algorithm.

\subsection{Unisolvent nodes}
We provide a constructive notion of unisolvence, resting on the follwoing defintions:

\begin{definition}[Transformations] An \emph{affine transformation}  $\tau : \R^m \lo \R^m$, $m \in \N$,
is a map $\tau(x) = Bx +b$, where $B \in \R^{m\times m}$ is an invertible matrix and $b \in \R^m$. An \emph{affine translation} is an affine transformation with $B=I$ the identity matrix. A
\emph{linear transformation} is an affine transformation with $b=0$.
\end{definition}
In this definition, the following holds:
  \begin{lemma}\label{lemma:TRF} Any affine transformation $\tau : \R^m \lo \R^m$, $m \in \N$, induces a ring isomorphism \linebreak $\tau^*: \R[x_1,\dots,x_m]\lo \R[x_1,\dots,x_m]$,
   $ \tau^*(Q)(x) = Q(\tau(x)) \,, \forall \, x \in \R^m$.
That is:
\begin{enumerate}[label=\roman*)]
 \item $\tau^*(1)=1$,
 \item  $\tau^*(\lambda Q_1 + \mu Q_2) = \lambda\tau^*(Q_1) + \mu \tau^*(Q_2)$ for all $Q_1,Q_2 \in \Pi_m$ and $\lambda,\mu \in \R$,
 \item  $\tau^*(Q_1Q_2) = \tau^*(Q_1)\tau^*(Q_2)$ for all $Q_1,Q_2 \in \Pi_m$ ,
 \item  the extension of $\tau^*$ to the ring of rational functions $\R[x_1,\dots,x_m]$ fulfills
 $\tau^*(Q_1/Q_2) =$  \linebreak $\tau^*(Q_1)/\tau^*(Q_2)$ for all $Q_1,Q_2 \in \Pi_m$, $Q_2 \neq 0$.
\end{enumerate}
\end{lemma}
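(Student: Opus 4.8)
The plan is to verify everything by the most elementary route: set $\tau^*(Q) := Q\circ\tau$, check it lands in $\Pi_m$, read off (i)--(iii) from pointwise arithmetic, deduce bijectivity from the invertibility of $B$, and then extend to rational functions through the field of fractions.

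First I would note that, writing $\tau(x) = Bx+b$, each coordinate function $x \mapsto (Bx+b)_i = \sum_{j=1}^m B_{ij}x_j + b_i$ is an affine polynomial in $x_1,\dots,x_m$; substituting these into a monomial $x^\alpha$ produces a polynomial of degree $\le \|\alpha\|_1$, and extending $\R$-linearly shows that $\tau^*$ maps $\Pi_m = \R[x_1,\dots,x_m]$ into itself without increasing degree. Properties (i)--(iii) are then immediate from the pointwise operations on functions: for every $x \in \R^m$ one has $\tau^*(1)(x) = 1$, $\tau^*(\lambda Q_1 + \mu Q_2)(x) = \lambda Q_1(\tau(x)) + \mu Q_2(\tau(x))$, and $\tau^*(Q_1 Q_2)(x) = Q_1(\tau(x))\,Q_2(\tau(x))$. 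Since two polynomials agreeing on all of $\R^m$ are equal, these are identities in $\Pi_m$, so $\tau^*$ is a unital $\R$-algebra homomorphism.

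For the isomorphism claim I would use that invertibility of $B$ makes $\tau$ a bijection of $\R^m$ with again-affine inverse $\tau^{-1}(y) = B^{-1}y - B^{-1}b$; by the same argument $(\tau^{-1})^*$ is an algebra homomorphism $\Pi_m \to \Pi_m$. Precomposition is contravariantly functorial, $(\sigma\circ\tau)^* = \tau^*\circ\sigma^*$ and $\id^* = \id_{\Pi_m}$, so $(\tau^{-1})^*$ is a two-sided inverse of $\tau^*$, which together with (i)--(iii) yields the ring isomorphism. Finally, for (iv): injectivity of $\tau^*$ forces $\tau^*(Q_2)\neq 0$ whenever $Q_2\neq 0$, so $Q_1/Q_2 \mapsto \tau^*(Q_1)/\tau^*(Q_2)$ is meaningful on $\R(x_1,\dots,x_m)$; it is well defined because $Q_1 Q_4 = Q_2 Q_3$ implies $\tau^*(Q_1)\tau^*(Q_4) = \tau^*(Q_2)\tau^*(Q_3)$ by (iii), and it is precisely the unique extension of the ring isomorphism $\tau^*$ to the field of fractions, agreeing with $Q \mapsto Q\circ\tau$ wherever the denominators do not vanish.

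There is no genuine obstacle; the lemma is routine. The only points deserving a word of care are that $\tau^*$ genuinely outputs polynomials (the degree-non-increase observation), and that it is invertibility of $B$ — not mere affineness of $\tau$ — that upgrades the homomorphism to an isomorphism and underwrites the rational-function extension.
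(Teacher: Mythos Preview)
Your proposal is correct and, for (i)--(iii), follows the same elementary pointwise verification the paper has in mind; the paper's own proof is only a one-line sketch declaring these ``trivial'' and ``straightforward'', and it does not spell out the bijectivity argument you give via $(\tau^{-1})^*$. The only notable difference is in (iv): the paper takes the extension of $\tau^*$ to rational functions as given by composition and derives the quotient identity from (iii) via $\tau^*(Q_1) = \tau^*((Q_2/Q_2)Q_1) = \tau^*(Q_2)\,\tau^*(Q_1/Q_2)$, whereas you build the extension through the field of fractions and check well-definedness. Both routes are standard and equally short; yours is slightly more careful about the logical order (define first, then verify), while the paper's trick is marginally slicker once one accepts that $\tau^*$ on rational functions is just precomposition.
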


\begin{proof} While $(i)$ is trivial and $(ii)$, $(iii)$ are straightforward to prove, $(iv)$ follows from $(iii)$ using the identity
 $\tau^*(Q_1)=\tau^*(1\cdot Q_1) = \tau^*((Q_2/Q_2)Q_1) = \tau^*(Q_2)\tau^*(Q_1/Q_2)$.
\end{proof}

\begin{definition}If $\Pi \subseteq \Pi_m$, $m \in \N$, is a finite-dimensional polynomial subspace, then we call $\tau : \R^m \lo \R^m$ a \emph{canonical transformation} with respect to $\Pi$ if
and only if $\tau$ is an affine transformation such that the induced transformation
$\tau^* : \Pi \lo \Pi_m$ satisfies  $\tau^*(\Pi)\subseteq \Pi$, resulting in $\tau^*$ to be an automorphism of $\Pi$.
\end{definition}

Note that not all transformations $\tau$ are canonical:
\begin{example}
Consider $Q(x,y) = x^ky^k$, $k \in \N$ and $\tau(x,y) = (x+y,x-y)$. Then $Q(\tau(x,y)) = (x+y)^k(x-y)^k = x^{2k} + \ldots $.
When choosing $k=\lfloor n /\sqrt{2}\rfloor$ maximal, such that $Q \in \Pi_{2,n,2}$, we deduce  $2k =2 \lfloor n /\sqrt{2}\rfloor >  n $ for $n \gg 1$. Hence, $\tau^*(Q) \not \in \Pi_{2,n,2}$, implying that $\tau$ is not canonical with respect to $\Pi_{2,n,2}$.
\end{example}
We continue formalising the concept of unisolvent nodes:
\begin{definition}[Unisolvence for hyperplane splits] Let $m \in \N$ and $\Pi \subseteq \Pi_m$ be a finite-dimensional polynomial subspace,
and let $H\subseteq \R^m$, $H = Q_H^{-1}(0)$ be a hyperplane defined by a linear polynomial $Q_H\in \Pi_{m,1,1}\setminus \{0\}$,
such that any affine transformation $\tau_H : \R^m \lo \R^m$ with $\tau_H(H) = \R^{m-1} \times \{0\}$ is canonical with respect to $\Pi$.
We consider
\begin{align}
 \Pi_{|H} &= \li \{ Q \in \Pi : \tau_H^*(Q) \in \Pi \cap (\Pi_{m-1}\times\{0\})\re\} \label{PIH} \\
 \Pi_{|H}^\# &= \li \{ Q \in \Pi_m : Q_H Q \in \Pi\re\} \nonumber
\end{align}
and call $P \subseteq \R^m$ \emph{unisolvent with respect to the hyperplane splitting} $(\Pi, H)$ if and only if:
\begin{enumerate}[label=\roman*)]
 \item there is no polynomial $Q \in \Pi_{|H}$ with $\tau^*_H(Q) \not = 0$ and $Q(P\cap H) =0$, and
 \item there is no polynomial $Q \in \Pi_{| H}^\# \setminus \{0\}$ with $Q(P\setminus H) =0$.
\end{enumerate}
\end{definition}

\begin{figure}[t!]
\center
\includegraphics[scale=0.15]{./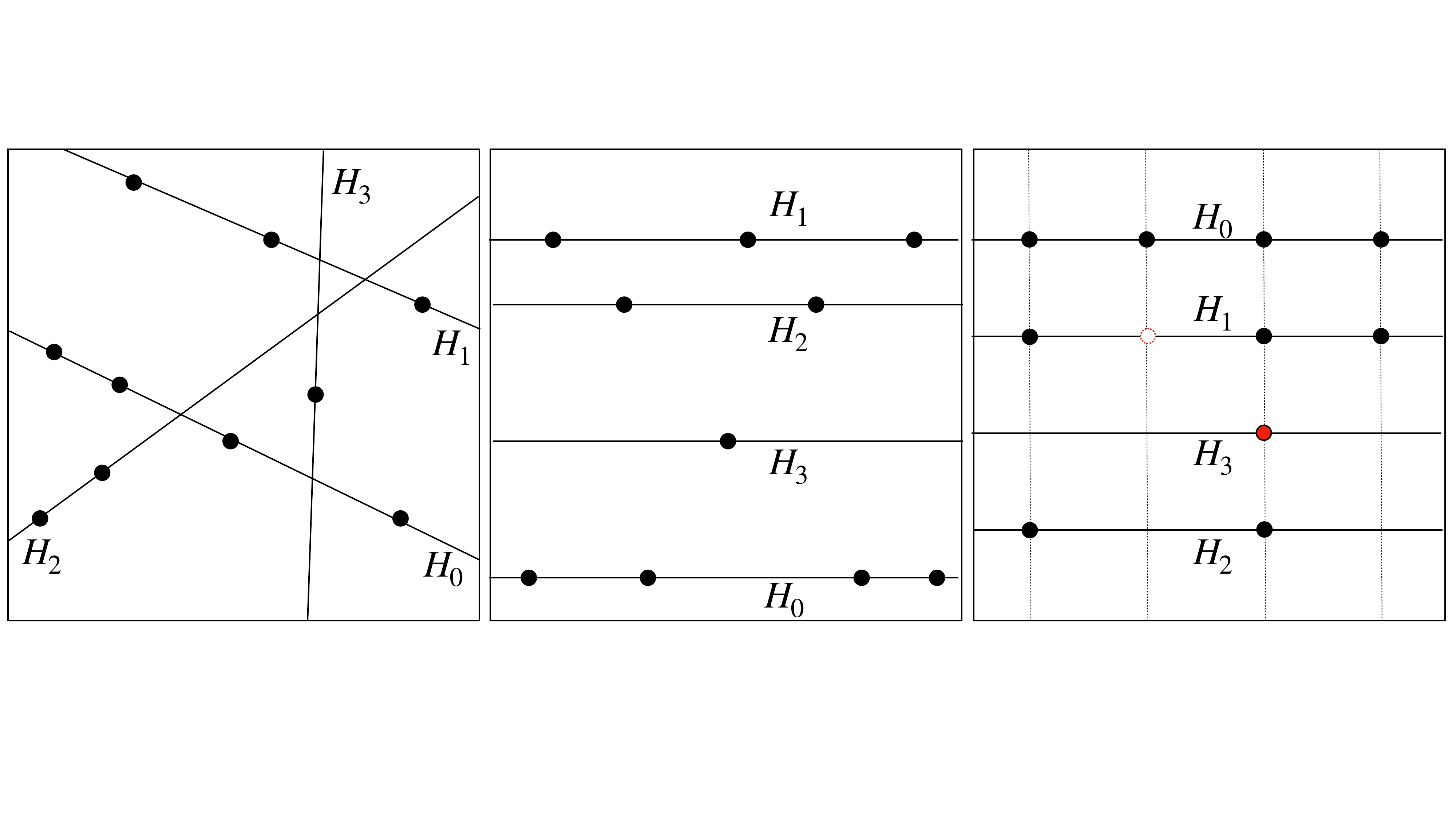}
\caption{Examples of unisolvent nodes $P_A$ for $A= A_{2,3,1}$ in general (left), irregular (middle), and non-tensorial (right) grids. In the right panel, non-tensorial nodes are indicated in red with missing symmetric counterparts shown as open symbols. \label{Fig:UN}}
\end{figure}

With the provided ingredients we state:
\begin{theorem} Let $m \in \N$, $ \Pi \subseteq \Pi_m$ be a finite-dimensional polynomial subspace, $P \subseteq \R^m$ a finite set of nodes, and $H = Q_H^{-1}(0)$ be a hyperplane of co-dimension 1 defined by a polynomial $Q_H \in \Pi_{m,1,1}\setminus\{0\}$
such that:
\begin{enumerate}[label=\roman*)]
 \item\label{A1}  the affine transformation $\tau_H : \R^m \lo \R^m$ with $\tau_H(H) = \R^{m-1} \times \{0\}$  induces a canonical transformation $\tau^*_H : \Pi \lo \Pi$, and
 \item\label{A2}  $P$ is unisolvent with respect to the hyperplane splitting $(\Pi,H)$.
\end{enumerate}
Then $P$ is unisolvent with respect to $\Pi$.
\label{theorem:UN}
\end{theorem}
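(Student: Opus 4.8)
The plan is to prove the contrapositive: assuming $P$ is \emph{not} unisolvent with respect to $\Pi$, produce a nonzero polynomial witnessing the failure of one of the two conditions in the definition of unisolvence with respect to the hyperplane splitting $(\Pi,H)$. So suppose there exists $Q \in \Pi\setminus\{0\}$ with $Q(P)=0$. Since the statement is invariant under applying a canonical transformation (by Lemma~\ref{lemma:TRF}, $\tau_H^*$ is a ring automorphism of $\Pi$ carrying the zero set of $P$ to the zero set of $\tau_H(P)$, and carrying $Q_H$ to a nonzero multiple of $x_m$), I would first reduce to the normalized situation $H = \R^{m-1}\times\{0\}$, so that $Q_H(x) = c\,x_m$ for some $c \neq 0$ and $\tau_H = \id$. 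In this normalized coordinate system, the key algebraic fact is the division/restriction dichotomy for a polynomial $Q$ with respect to the variable $x_m$: either $x_m \mid Q$ in $\R[x_1,\dots,x_m]$, in which case $Q = x_m \widetilde Q$ with $\widetilde Q \in \Pi_{|H}^{\#}$; or $x_m \nmid Q$, in which case the restriction $Q|_{x_m=0}$ is a nonzero polynomial in $\R[x_1,\dots,x_{m-1}]$, and this restriction is exactly $\tau_H^*(Q)$ evaluated on $\R^{m-1}\times\{0\}$, i.e. it records the image of $Q$ under the map $\Pi_{|H}\ni Q \mapsto \tau_H^*(Q) \in \Pi\cap(\Pi_{m-1}\times\{0\})$.

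With this dichotomy in hand the argument splits into the two cases. If $x_m \nmid Q$: first note $Q \in \Pi_{|H}$ — here I would need that $\tau_H^*(Q)=Q$ (in normalized coordinates) indeed lands in $\Pi\cap(\Pi_{m-1}\times\{0\})$; this requires knowing that $Q$ itself, once we restrict attention to the $H$-part, stays inside $\Pi$, which follows because $\Pi$ is a canonical subspace and the "truncation to $x_m=0$" respects $\Pi$ for downward-closed-type spaces. Actually the cleaner route: since $Q(P)=0$ and in particular $Q(P\cap H)=0$, and $Q|_{x_m=0}\neq 0$ means $\tau_H^*(Q)\neq 0$, we get a polynomial violating condition (i) of unisolvence for hyperplane splits — provided $Q\in\Pi_{|H}$, which is where I'd have to be careful about whether every $Q\in\Pi$ vanishing on $P\cap H$ automatically lies in $\Pi_{|H}$, or whether I should instead first project $Q$ onto its "$x_m$-free part." If $x_m \mid Q$: write $Q = Q_H \cdot \widetilde Q / c$ with $\widetilde Q \neq 0$; then $\widetilde Q \in \Pi_{|H}^{\#}$ by definition, and since $Q$ vanishes on $P$ while $Q_H$ vanishes exactly on $P\cap H$, the polynomial $\widetilde Q$ must vanish on $P\setminus H$ — giving a violation of condition (ii). In both cases we have contradicted the assumed unisolvence with respect to $(\Pi,H)$, completing the contrapositive.

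I expect the main obstacle to be the bookkeeping around the space $\Pi_{|H}$ in the first case — specifically, showing that a general $Q\in\Pi$ vanishing on $P\cap H$ can be replaced, without loss of generality, by a representative that genuinely belongs to $\Pi_{|H}$ as defined in Eq.~\eqref{PIH}, so that condition (i) applies to it. The subtlety is that $\Pi_{|H}$ is defined via the \emph{image} condition $\tau_H^*(Q)\in\Pi\cap(\Pi_{m-1}\times\{0\})$, which is not automatic for every element of $\Pi$; one must use that $\tau_H^*$ is an automorphism of $\Pi$ (Lemma~\ref{lemma:TRF} together with hypothesis~\ref{A1}) and that the decomposition "multiple of $x_m$" plus "$x_m$-free part" can be carried out within $\Pi$ — i.e. that $\Pi = x_m\Pi' \oplus (\Pi\cap(\Pi_{m-1}\times\{0\}))$ for the appropriate $\Pi'$, which is precisely the content of having a canonical (hence, in the downward-closed setting, "graded in $x_m$") subspace. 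Once that structural decomposition of $\Pi$ is established, the rest is the clean two-case argument above; I would therefore open the proof by first recording that decomposition as the real engine, then feeding it into the contrapositive.
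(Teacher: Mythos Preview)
Your dichotomy ``$x_m \mid Q$ versus $x_m \nmid Q$'' does not work as stated, and you correctly diagnose why in your final paragraph: the condition $x_m \nmid Q$ does \emph{not} imply $Q \in \Pi_{|H}$, since Eq.~\eqref{PIH} requires $\tau_H^*(Q)$ to lie entirely in $\Pi_{m-1}\times\{0\}$ (no $x_m$-dependence at all), whereas a polynomial not divisible by $x_m$ may still carry $x_m$-terms. So the case split cannot be fed directly into condition~(i). The fix you propose---first establish the decomposition $Q = Q_1 + Q_H Q_2$ with $Q_1$ the $x_m$-free part and $Q_2 \in \Pi_{|H}^\#$---is exactly the right engine, and once you have it the case split becomes superfluous: $Q_1$ agrees with $Q$ on $H$, hence vanishes on $P\cap H$, so $\tau_H^*(Q_1)=0$ by hypothesis~(i), whence $Q_1=0$; then $Q_H Q_2 = Q$ vanishes on $P\setminus H$ where $Q_H\neq 0$, so $Q_2=0$ by hypothesis~(ii); thus $Q=0$.

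This is precisely the paper's proof, carried out directly rather than by contrapositive. The paper writes down the decomposition explicitly as $Q_1 = \tau_H^{*-1}\pi_{m-1}\tau_H^*(Q)$ and $Q_2 = (Q-Q_1)/Q_H$, then verifies $Q_2 \in \Pi_{|H}^\#$ in its Step~1 and concludes $Q_1=0$, $Q_2=0$ in Step~2. Note in particular that the paper's formula for $Q_1$ is stated for general $\Pi$ under hypothesis~\ref{A1}, not only for downward-closed spaces; you should adopt that explicit construction rather than appealing to a structural assumption on $\Pi$ that the theorem does not make.
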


\begin{proof} Let $Q \in \Pi$ with $Q(P)=0$.
We consider the affine transformation $\tau_H : \R^m \lo \R^m$ with $\tau_H(H) = \R^{m-1} \times \{0\}$ and the projection $\pi_{m-1}: \Pi_m \lo \Pi_{m-1} \times \{0\}$.
Let further
\begin{equation}\label{split}
 Q_1 = \tau^{*-1}_H\pi_{m-1}\tau^*_H(Q) \in \Pi_H \quad \text{and}\quad  Q_2= (Q -Q_1)/Q_H\,.
\end{equation}

{\bf Step 1:} We show that $Q_2 \in \Pi_{H}^\# $.  Certainly, $Q_2$ is a well-defined function on $\R^m\setminus H$.
Furthermore, we note that the linearity of $\tau_H$ implies  $\tau_H^*(Q_H) = \lambda x_m$, $\lambda \in \R\setminus \{0\}$. W.l.o.g.,~we assume $\lambda=1$ and use Lemma \ref{lemma:TRF}$iii)$ to reformulate Eq.~\eqref{split} as
\begin{align*}
 Q_2 &=  \tau^{*-1}_H\big(\tau^{*}_H(Q) - \pi_{m-1}\tau^*_H(Q)\big)\big /(\tau^{*-1}_H (\tau^{*}_H(Q_H))   \\
 &= \tau^{*-1}_H\big((\tau^{*}_H(Q) - \pi_{m-1}\tau^*_H(Q))/x_m\big) .
\end{align*}
Since $Q_0:=\tau_H^*(Q)- \pi_{m-1}\tau^*_H(Q)$ can be expanded into canonical form, of which all monomials share the variable $x_m$, the quotient $(\tau^{*}_H(Q) - \pi_{m-1}\tau^*_H(Q))/x_m \in \Pi$ is a polynomial, implying $Q_2 \in \Pi$.
Further, by Lemma \ref{lemma:TRF}$ii)$, we obtain
$$Q_HQ_2 = \tau_H^{*-1}(x_m)\tau_H^{*-1}(Q_0 /x_m)=\tau_H^{*-1}(Q_0) \in \Pi.$$
Hence, $Q_2 \in \Pi_{| H}^\# $ as claimed.

{\bf Step 2:} We show that $Q=0$. Indeed, $Q(p) = Q_1(p) =0$ for all $p \in P\cap H$ implies that $Q_1=0$ due to assumption~\ref{A1}. Consequently, $Q_HQ_2(p) =0$ for all $p\in P\setminus H$.
Since $Q_H(p)\not =0$ for all $p \in P\setminus H$, we get $Q_2(p) =0$, $\forall p \in P\setminus H$. Since $P$ is unisolvent with respect to
the hyperplane splitting $(\Pi,H)$, and due to Step 1, we have $Q_2 \in \Pi_{| H}^\# $, this implies $Q_2=0$.
Thus, $Q=0$ is the zero polynomial, proving $P$ to be unisolvent with respect to $\Pi$.
\end{proof}

\begin{example}In Fig.~\ref{Fig:UN}, we show examples of unisolvent nodes in 2D for $A= A_{2,3,1}$, generated by recursively applying Theorem~\ref{theorem:UN}. The three panels show examples for three different choices of the, in this case 1D, hyperplanes $H_0,\ldots ,H_3$ (solid lines) from Theorem~\ref{theorem:UN}. In the left panel, the hyperplanes are chosen arbitrarily. This starts by first choosing a hyperplane (line) $H_0$ and $n+1=4$ unisolvent nodes on $H_0$. Then choose $H_1 \not = H_0$ and 3 unisolvent nodes on $H_1 \setminus H_0$,
and recursively continue until choosing 1 unisolvent node on $H_3\setminus (H_0 \cup H_1 \cup H_2)$. When choosing the hyperplanes parallel to each other, as shown in the middle panel, this construction results in an irregular grid. Quantizing the distance between hyperplanes as well as between nodes on them further leads to non-tensorial grids, as shown in the right panel.
\end{example}

This example illustrates how the notion of unisolvence presented here extends beyond notions resting on (sparse) symmetric, tensorial, or nested grids, such as Leja points \citep{cohen2,cohen3}. However, even this generalised notion admits multivariate interpolation algorithms, thanks to the following splitting statement:
\begin{theorem}\label{GS} Let the assumptions of Theorem \ref{theorem:UN} be fulfilled and $f : \R^m \lo \R$ be a function.
Assume there are polynomials $Q_1 \in \Pi_{|H}$, $Q_2 \in \Pi_{| H}^\# $ with $\Pi_{|H}$, $\Pi_{| H}^\# $ from Eq.~\eqref{PIH}, such that:
\begin{enumerate}[label=\roman*)]
 \item $Q_1(p) = f(p)\,, \forall\, p \in P\cap H$,
 \item $Q_2(p) = (f(p) - Q_1(p))/Q_H(p)\,, \forall\, p \in P\setminus H$.
\end{enumerate}
Then, $Q = Q_1 +Q_HQ_2 \in \Pi$ is the unique polynomial in $\Pi$ that interpolates $f$ in $P$, i.e., $Q(p)=f(p) \,\, \forall\, p \in P$.
\label{theorem:PIP}
\end{theorem}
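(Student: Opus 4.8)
The plan is to verify the two claimed properties of $Q = Q_1 + Q_H Q_2$ — membership in $\Pi$ and interpolation of $f$ on $P$ — and then invoke Theorem~\ref{theorem:UN} to upgrade interpolation to \emph{unique} interpolation.

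First I would check that $Q \in \Pi$. Since $Q_1 \in \Pi_{|H} \subseteq \Pi$ by definition of $\Pi_{|H}$ in Eq.~\eqref{PIH}, it suffices to show $Q_H Q_2 \in \Pi$. But this is precisely the defining condition of $\Pi_{|H}^\#$: we have $Q_2 \in \Pi_{|H}^\#$, which means $Q_H Q_2 \in \Pi$. Hence $Q = Q_1 + Q_H Q_2 \in \Pi$ by linearity of $\Pi$ as a vector space.

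Next I would verify $Q(p) = f(p)$ for all $p \in P$, splitting $P$ into $P \cap H$ and $P \setminus H$. For $p \in P \cap H$: since $H = Q_H^{-1}(0)$, we have $Q_H(p) = 0$, so $Q(p) = Q_1(p) + Q_H(p)Q_2(p) = Q_1(p) = f(p)$ by hypothesis $(i)$. For $p \in P \setminus H$: here $Q_H(p) \neq 0$, and hypothesis $(ii)$ gives $Q_2(p) = (f(p) - Q_1(p))/Q_H(p)$, so $Q(p) = Q_1(p) + Q_H(p) \cdot (f(p) - Q_1(p))/Q_H(p) = Q_1(p) + (f(p) - Q_1(p)) = f(p)$. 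Thus $Q$ interpolates $f$ on all of $P$.

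Finally, for uniqueness: suppose $\tilde Q \in \Pi$ also satisfies $\tilde Q(p) = f(p)$ for all $p \in P$. Then $Q - \tilde Q \in \Pi$ vanishes on $P$. By Theorem~\ref{theorem:UN}, the hypotheses \ref{A1} and \ref{A2} (which are exactly the assumptions of Theorem~\ref{theorem:UN} carried over here) imply $P$ is unisolvent with respect to $\Pi$, so the only polynomial in $\Pi$ vanishing on $P$ is the zero polynomial; hence $Q = \tilde Q$. This establishes both existence and uniqueness. I do not anticipate a genuine obstacle here — the statement is essentially an assembly of the definitions of $\Pi_{|H}$, $\Pi_{|H}^\#$, and unisolvence, together with Theorem~\ref{theorem:UN} — the only point requiring a moment's care is confirming that $Q_H Q_2$ lands in $\Pi$ rather than merely in $\Pi_m$, which is exactly what the definition of $\Pi_{|H}^\#$ supplies.
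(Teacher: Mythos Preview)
Your proposal is correct and follows essentially the same approach as the paper: verify $Q\in\Pi$ via the definitions of $\Pi_{|H}$ and $\Pi_{|H}^\#$, check interpolation on $P\cap H$ and $P\setminus H$ separately, and invoke Theorem~\ref{theorem:UN} for uniqueness. If anything, you are slightly more explicit than the paper in spelling out why $Q\in\Pi$.
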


\begin{figure}[t!]
\center
\includegraphics[scale=0.15]{./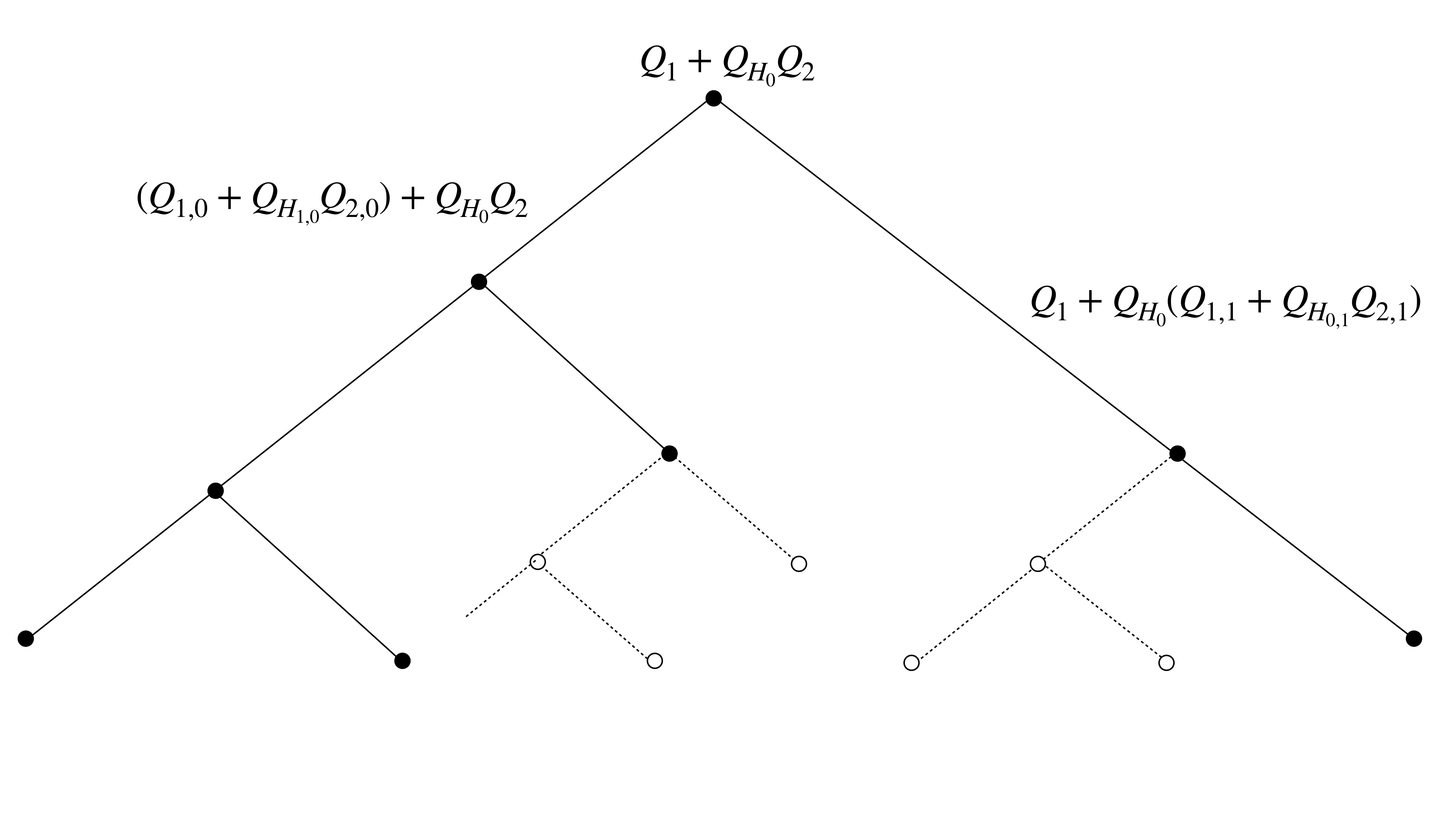}
\caption{The generalised divided difference scheme given by recursively choosing suitable hyper(sub)planes $H_{0}, H_{1,0}, H_{0,1}, \ldots$ and nodes $P=P_0 =P_{1,0}\cup P_{0,1}, \ldots$ according to Theorem~\ref{theorem:UN}, and applying the splitting Theorem~\ref{GS} to the separated polynomials $Q_{1},Q_2,Q_{1,0},Q_{2,0},Q_{1,1},Q_{2,1},\ldots$. \label{Fig:TREE}}
\end{figure}

\begin{proof} $Q_H\not = 0$ on $\R^m \setminus H$ implies that $Q(p)=f(p)$, $\forall\, p \in P$. Thus, $Q$ interpolates $f$ in $P$.
To show the uniqueness of $Q$ let $Q' \in \Pi$ interpolate $f$ in  $P$.
Then, $Q-Q' \in \Pi$ and $(Q-Q')(p) = 0 \, \, \forall p \in P$.  Due to Theorem \ref{theorem:UN},  $P$ is unisolvent with respect to
$\Pi$. Thus, $Q'-Q\equiv 0$ is the zero polynomial, proving that $Q$ is uniquely determined in $\Pi$.
\end{proof}

\begin{remark} Recursion of Theorem~\ref{GS} yields a general divided difference scheme, as
illustrated in Fig.~\ref{Fig:TREE} \citep[see also][]{PIP1,IEEE}, which requires evaluating $Q_1$in $P \setminus H$ in each recursion step.
Unless the computational costs of these evaluations can be reduced to liner time $\Oc(|P\setminus H|)$, one ends up with super-quadratic, up to cubic $\Oc(|A|^3)$, runtime, as in \citep{cohen2,cohen3}.
Choosing unisolvent nodes as non-tensorial grids (cf.~Fig.~\ref{Fig:UN}, right panel), however, avoids the $Q_1$-evaluation, resulting in an interpolation algorithm with quadratic runtime complexity $\Oc(|A|^2)$.
\end{remark}

\subsection{Unisolvent non-tensorial grids}
As a direct consequence of Theorem~\ref{theorem:UN}, we deduce:

\begin{corollary}\label{cor:grid} Let $m \in \N$, $A\subseteq \N^m$ be a downward closed set of multi-indices, and $\Pi_A \subseteq \Pi_m$ the polynomial sub-space induced by $A$.
Let $P_i = \{p_{0,i},\dots,p_{n_i,i}\} \subseteq \square_1$ be arbitrary sets of size $n_i \geq \max_{\alpha \in A} \alpha_i$. Then, the node set
\begin{equation}\label{eq:LCL}
  P_A = \li\{ (p_{\alpha_1,1}\,, \dots \,, p_{\alpha_m,m} ) : \alpha \in A\re\}
\end{equation}
is unisolvent with respect to $\Pi_A$.
\end{corollary}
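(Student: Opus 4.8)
The plan is to prove Corollary~\ref{cor:grid} by induction on the dimension $m$, using Theorem~\ref{theorem:UN} to peel off one coordinate hyperplane at a time, exactly as suggested by the recursive picture in Figure~\ref{Fig:TREE}. The base case $m=1$ is the classical fact that any $n_1+1 \geq \max_{\alpha\in A}\alpha_1 + 1$ distinct points in $\square_1$ are unisolvent for the univariate space $\Pi_A = \mathrm{span}\{x^{\alpha_1} : \alpha_1 \in A\}$, which follows because a nonzero polynomial in that span has degree at most $\max_{\alpha\in A}\alpha_1 \le n_1$ and hence at most $n_1$ roots. For the induction step I would fix the last coordinate and use the hyperplane $H = Q_H^{-1}(0)$ with $Q_H(x) = x_m - p_{n_m, m}$ (an affine polynomial in $\Pi_{m,1,1}\setminus\{0\}$), whose associated affine translation $\tau_H(x) = (x_1,\dots,x_{m-1}, x_m - p_{n_m,m})$ sends $H$ to $\R^{m-1}\times\{0\}$. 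Since $\tau_H$ is a translation, $\tau_H^*$ maps each monomial $x^\alpha$ to a polynomial whose monomials $x^\beta$ all satisfy $\beta \le \alpha$ coordinatewise; because $A$ is downward closed, every such $\beta$ lies in $A$, so $\tau_H^*(\Pi_A)\subseteq \Pi_A$ and hypothesis \ref{A1} of Theorem~\ref{theorem:UN} holds.

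It then remains to verify hypothesis \ref{A2}: that $P_A$ is unisolvent with respect to the hyperplane splitting $(\Pi_A, H)$, i.e.\ conditions $(i)$ and $(ii)$ of that definition. I would describe $P_A \cap H$ and $P_A\setminus H$ explicitly: writing $A' = \{\alpha' \in \N^{m-1} : (\alpha', n_m) \in A\}$ and $A'' = \{\alpha \in A : \alpha_m < n_m\}$, the slice $P_A\cap H$ is the grid built from $P_1,\dots,P_{m-1}$ over the index set $A'$, and $P_A\setminus H$ is the grid built over $A''$ (which still involves all $m$ coordinates, but with the last-coordinate range reduced to $\{p_{0,m},\dots,p_{n_m-1,m}\}$). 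One checks that $A'$ is downward closed in $\N^{m-1}$ and that $A''$ is downward closed in $\N^m$ with $\max_{\alpha\in A''}\alpha_m \le n_m - 1$. For condition $(i)$: any $Q \in \Pi_{A|H}$ with $\tau_H^*(Q)$ supported on $\Pi_{m-1}\times\{0\}$ corresponds, after restriction to $H$, to an $(m-1)$-variate polynomial in $\Pi_{A'}$; applying the induction hypothesis to the grid from $P_1,\dots,P_{m-1}$ and index set $A'$ (whose per-coordinate sizes $n_i \ge \max_{\alpha\in A}\alpha_i \ge \max_{\alpha'\in A'}\alpha'_i$ suffice) forces $Q|_H = 0$, hence $\tau_H^*(Q)=0$. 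For condition $(ii)$: if $Q \in \Pi_{A|H}^\#\setminus\{0\}$, then $Q_H Q \in \Pi_A$; I would argue the canonical expansion of such a product, together with downward-closedness of $A$, forces $Q \in \Pi_{A''}$ (the $x_m$-degree of $Q_H Q$ exceeds that of $Q$ by one, and being in $A$ caps it, so $Q$'s $x_m$-degree is at most $n_m-1$, and the lower-index monomials also lie in $A''$), and then apply the induction hypothesis to $P_A\setminus H$ with index set $A''$ to conclude $Q = 0$.

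The main obstacle I anticipate is the bookkeeping in condition $(ii)$: showing cleanly that $\{Q : Q_H Q \in \Pi_A\} \subseteq \Pi_{A''}$ and that the point set $P_A\setminus H$ is genuinely the $A''$-grid in the sense required by the induction hypothesis — in particular that its last-coordinate sample set $\{p_{0,m},\dots,p_{n_m-1,m}\}$ has size $n_m \ge \max_{\alpha\in A''}\alpha_m + 1$, which holds since $\max_{\alpha\in A''}\alpha_m \le n_m - 1$. A subtlety is that $A''$ may have smaller per-coordinate maxima than $A$ in the other variables too, but monotonicity of the hypothesis sizes $n_i \ge \max_{\alpha\in A}\alpha_i \ge \max_{\alpha\in A''}\alpha_i$ handles this. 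Once both conditions are in place, Theorem~\ref{theorem:UN} yields that $P_A$ is unisolvent with respect to $\Pi_A$, completing the induction. I would remark in passing that this is precisely the recursive construction of Figure~\ref{Fig:TREE} specialised to axis-aligned hyperplanes and product grids, and that it is the case exploited by the MIP algorithm to avoid the costly $Q_1$-evaluations.
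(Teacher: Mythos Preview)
There is a genuine gap in your verification of condition~(i). The space $\Pi_{A|H}$ does \emph{not} depend on which level $p_{j,m}$ you slice at: since $\tau_H$ is a translation in $x_m$ only, $\tau_H^*(Q)\in\Pi_{m-1}\times\{0\}$ iff $Q$ is independent of $x_m$, so $\Pi_{A|H}=\mathrm{span}\{x^\alpha:\alpha\in A,\ \alpha_m=0\}=\Pi_{A_1}$, the \emph{bottom} slice, regardless of your choice $Q_H=x_m-p_{n_m,m}$. But with that choice $P_A\cap H=\{p_\alpha:\alpha_m=n_m\}$ is indexed by the \emph{top} slice $A'$, and for a generic downward closed $A$ one has $|A'|<|A_1|=\dim\Pi_{A|H}$ (indeed $A'=\emptyset$ whenever $n_m>\max_{\alpha\in A}\alpha_m$). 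Hence there always exists a nonzero $Q\in\Pi_{A|H}$ vanishing on $P_A\cap H$, and condition~(i) fails. Concretely, for $A=\{(0,0),(1,0),(0,1)\}$ with $n_2=1$ your $P_A\cap H$ is a single point while $\Pi_{A|H}=\mathrm{span}\{1,x_1\}$ is two-dimensional. The paper avoids this by slicing at the \emph{first} node $p_{0,m}$, so that $P_A\cap H$ is indexed by $A_1$ itself and the cardinalities match.

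A second, more structural issue is that your induction on $m$ alone cannot close condition~(ii): the set $A''$ (or, more precisely, the correct set $\{\alpha:\alpha+e_m\in A\}$, which is strictly smaller than your $A''$ in general) still lives in $\N^m$, so you cannot invoke the hypothesis for dimension $m-1$. The paper therefore inducts simultaneously on $m$ \emph{and} on $|A|$; with the bottom-slice hyperplane, condition~(i) reduces to an $(m{-}1)$-dimensional instance and condition~(ii) to an $m$-dimensional instance with strictly fewer multi-indices.
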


\begin{proof} We argue by induction on $m$ and $|A|$. For $m=1$  the claim follows from the fact that $\dim \Pi_A= |A|$ and no polynomial $Q\in \Pi_A$ can vanish in  $|A|$ distinct nodes
$P_A$. The claim becomes trivial for $|A|=1$. Now assume that $m >1$ and $|A|>1$. We consider $A_1=\li\{\alpha \in A :  \alpha_m =0\re\}$, $A_2 = A \setminus A_1$.
By decreasing $m$ if necessary and w.l.o.g., we can assume that $A_2 \not = \emptyset$. Consider the hyperplane
$H =\{ (x_1,\dots,x_{m-1},p_{0,m}):  (x_1,\ldots,x_{m-1}) \in \R^{m-1}\}$ and
$Q_H \in \Pi_{m,1,1}$ with $Q_H(x)= x_m-p_{0,m}$. Induction yields that $P_A$ is unisolvent with respect to $(\Pi_A,H)$.
By realising that the affine translation
$\tau_H(x) = (x_1,\ldots,x_{m-1},x_m)- (0,\ldots,0,p_{0,m})$ is canonical with respect to $\Pi_A$, Theorem~\ref{theorem:UN} applies and proves $P_A$ to be unisolvent with respect to $\Pi_A$.
\end{proof}

\begin{figure}[t!]
\center
\includegraphics[scale=0.15]{./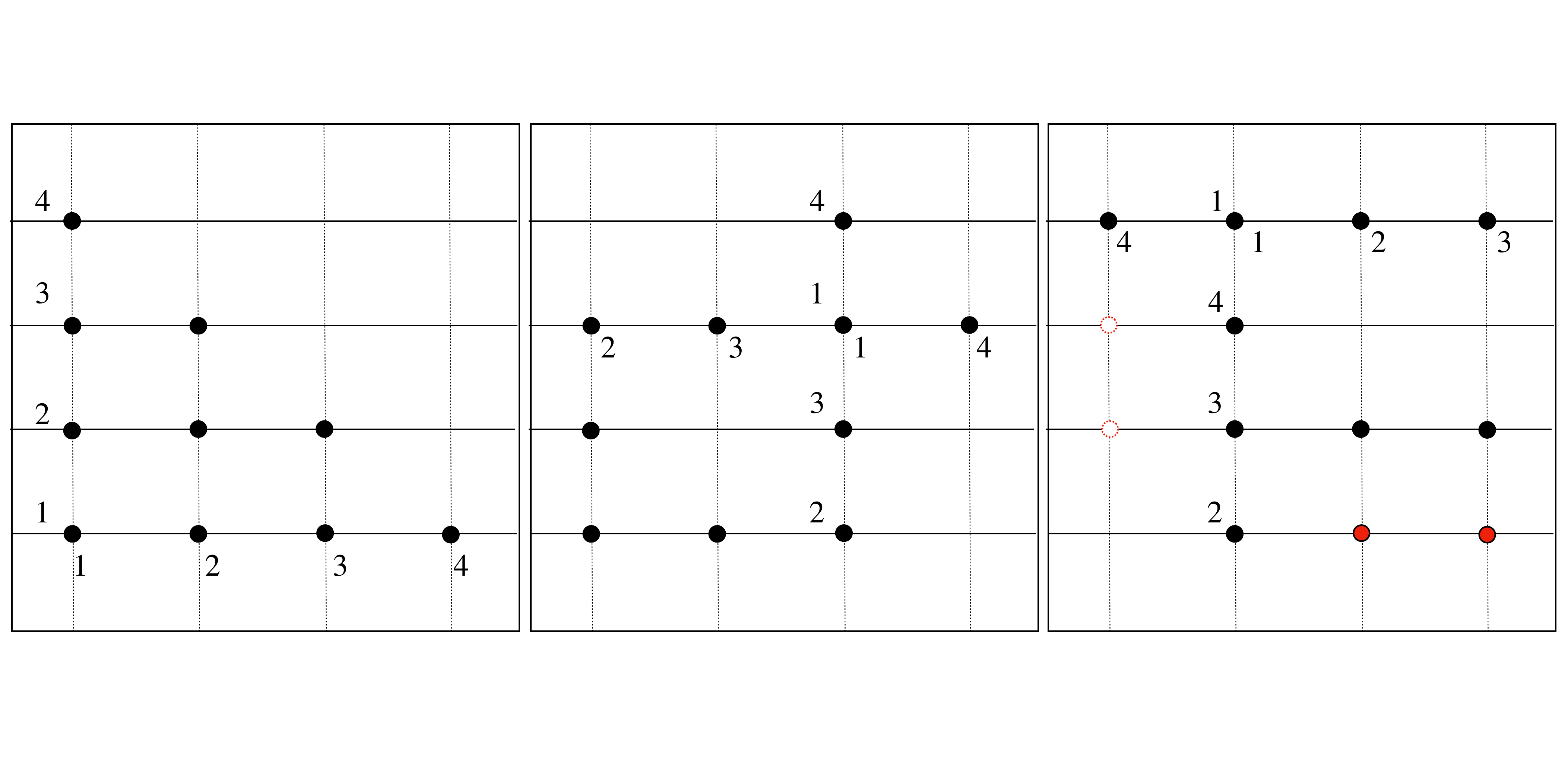}
\caption{Examples of unisolvent nodes for $A= A_{2,3,1}$ (left, middle) and $A_{2,3,2}$ (right). Note that $(2,2) \in A_{2,3,2}\setminus A_{2,3,1}$ generates an extra node. Orderings in $x,y$--directions are indicated by numbers, and non-tensorial nodes are shown in red. \label{Fig:UN2}}
\end{figure}

\begin{example}
It is important to note that although the index sets $A$ are assumed to be downward closed,
the flexibility in ordering the $P_i$ results in unisolvent nodes $P_A$ that may induce \emph{non-tensorial (non-symmetric) grids}, in which there are nodes $p=(p_x,p_y) \in P_A$ with $(p_y,p_x)\not \in P_A$. This might occur even when all $P_i =P$, $1 \leq i \leq m$, only differ by reordering. Examples are shown in Fig.~\ref{Fig:UN} (right), Fig.~\ref{Fig:UN2} (right), and Fig.~\ref{Nodes}. In comparison, Fig.~\ref{Fig:UN2} (left, middle) shows examples of symmetric  grids, which occur if, in addition, all $P_i$ coincide in their ordering.
\end{example}

We continue by showing that efficient interpolation is possible in non-tensorial grids.

\section{Multivariate Newton interpolation in non-tensorial grids}

We provide a natural extension of the classic Newton interpolation scheme to arbitrary dimensions, directly based on the notion of unisolvence, Theorem~\ref{theorem:UN}, and Corollary~\ref{cor:grid}. This completes previous contributions \citep{neidinger2019}, which did not guarantee unique interpolants apart from the total- and maximum-degree case.
Examples and less formal explanations are given in the documentation of the accompanying Python package {\sc minterpy}~\citep{minterpy}.

\subsection{Multivariate Newton interpolation}\label{sec:NEWT}

The extension relies on recursively applying Theorem \ref{theorem:PIP} and Corollary \ref{cor:grid}. We start by defining:
\begin{definition}[Multivariate Newton polynomials] Let $A\subseteq \N^m$ be downward closed and $P_A\subseteq \square_m$ unisolvent nodes as in Corollary~\ref{cor:grid}.
We define the \emph{multivariate Newton polynomials} as:
 \begin{equation}\label{Newt}
  N_\alpha(x) = \prod_{i=1}^m\prod_{j=0}^{\alpha_i-1}(x_i-p_{j,i}) \,, \quad \alpha \in A\,.
 \end{equation}
\end{definition}

In dimension $m=1$, this reduces to the classic 1D Newton polynomials~\citep[see, e.g.,][]{Stoer,gautschi,trefethen2019}. In $m$D the notion allows interpolating by a divided difference scheme:

\begin{definition}[Multivariate divided differences] \label{def:DDS}  Let $A\subseteq \N^m$ be downward closed, $P_A\subseteq \square_m$ unisolvent nodes as in  Corollary~\ref{cor:grid},
and $f : \R^m \lo \R$ a function.
For $\alpha =(a_1,\dots,a_m)\in A$ we define $\beta^{\alpha,i,j} = (b_1,\dots,b_m)\in A$, with $\leq j  <a_i$, $i=1,\dots,m$, as:
\begin{equation}\label{BETA}
b_h = \li\{\begin{array}{ll}
                        a_h & \,\,\, \text{if}\,\,\, h \not =  i  \\
                        j   & \,\,\, \text{if}\,\,\, h = i \, .
                        \end{array}\re.
\end{equation}
Then, we recursively define the {\em multivariate divided differences}:
$$
F_{\alpha,m,0}  =f(p_{\alpha}) \,, \quad    F_{\alpha,i,0}  =F_{\alpha,i+1,a_{i+1}-1}\ \quad  \text{for}\,\,\,   1 \leq i < m
$$
and
\begin{equation}\label{Falpha}
    F_{\alpha,i,j}:=\frac{F_{\alpha,i,j-1} - F_{\beta^{\alpha,i,j-1},i,j-1}}{p_{a_i,i}-p_{j-1,i} }
\quad \text{for}\,\,\, 1 \leq j\leq a_i \,.
\end{equation}
Finally, we define
$c_{\alpha}:= F_{\alpha,1,a_1}$, $\alpha =(a_1,\dots,a_m) \in A$, as the \emph{Newton coefficients} of $Q_{f,A} \in \Pi_A$.
\label{MVDD}
\end{definition}

In dimension $m=1$, this definition recovers the classic \emph{divided difference scheme of 1D Newton interpolation}~\citep{Stoer,gautschi}.
In $m$D, we state:

\begin{theorem}[Multivariate Newton interpolation] \label{theo:MIP} Let the assumptions of Definition~\ref{def:DDS} hold.
Then, the unique polynomial $Q_{f,A} \in \Pi_A$ interpolating $f$ in $P_A$, $Q_f(p) = f(p), \,  \forall \, p \in P_A$, can be determined in $\Oc(|A|^2)$ operations requiring $\Oc(|A|)$ storage. It is given by
\begin{equation}\label{Newton}
  Q_{f,A}(x) = \sum_{\alpha \in A} c_\alpha N_{\alpha} (x)\,,
\end{equation}
where $c_\alpha$ are the \emph{Newton coefficients} of $Q_{f,A} \in \Pi_A$.
\end{theorem}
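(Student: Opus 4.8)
The plan is to prove four things in sequence: that $\{N_\alpha\}_{\alpha\in A}$ is a basis of $\Pi_A$; that the interpolant of $f$ in $P_A$ is unique; that the divided differences of Definition~\ref{def:DDS} are precisely its coordinates in this basis; and the stated complexity bounds. For the basis claim, expand $N_\alpha(x)=\prod_{i=1}^m\prod_{j=0}^{\alpha_i-1}(x_i-p_{j,i})$: each univariate factor is a monic polynomial in $x_i$ of degree $\alpha_i$, i.e.\ $x_i^{\alpha_i}$ plus strictly lower powers of $x_i$, so $N_\alpha = x^\alpha + \sum_{\gamma} t_{\alpha,\gamma}\,x^\gamma$ where every $\gamma$ occurring satisfies $\gamma_i\le\alpha_i$ for all $i$, hence $\gamma<_L\alpha$; since $A$ is downward closed all such $\gamma$ lie in $A$, so $N_\alpha\in\Pi_A$ and the change of basis between $\{x^\alpha\}_{\alpha\in A}$ and $\{N_\alpha\}_{\alpha\in A}$ is unitriangular with respect to $\leq_L$. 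Consequently $\{N_\alpha\}_{\alpha\in A}$ is a basis, $\dim\Pi_A=|A|$, and every element of $\Pi_A$ — in particular the sought interpolant — has the form~\eqref{Newton}. Uniqueness is then immediate: Corollary~\ref{cor:grid} gives that $P_A$ is unisolvent with respect to $\Pi_A$ and $|P_A|=|A|=\dim\Pi_A$, so the evaluation map $\Pi_A\to\R^{P_A}$ is a linear bijection and $Q_{f,A}$ exists and is unique.

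The core is identifying the coefficients, which I would do by induction on $m$ and $|A|$, following the recursive structure behind Corollary~\ref{cor:grid} and Theorem~\ref{GS}. The cases $m=1$ (classical univariate Newton) and $|A|=1$ ($Q\equiv f(p_\alpha)$) are clear. For the step, split $A=A_1\sqcup A_2$ with $A_1=\{\alpha\in A:\alpha_m=0\}$, $A_2=A\setminus A_1$; put $H=\{x_m=p_{0,m}\}$, $Q_H=x_m-p_{0,m}$, so that $\tau_H$ is canonical and, by the argument in Corollary~\ref{cor:grid}, $P_A$ is unisolvent for the splitting $(\Pi_A,H)$, allowing Theorem~\ref{GS} to be invoked. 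On $H$ the subproblem is $(m-1)$-variate Newton interpolation on the index set $A_1$ — the $N_\alpha$ with $\alpha_m=0$ do not involve $x_m$ — while off $H$ one Newton-interpolates $(f-Q_1)/Q_H$ on $P_A\setminus H$ inside $\Pi_{|H}^{\#}=\Pi_{A'}$, $A'=\{\alpha-e_m:\alpha\in A_2\}$, now using the shifted one-dimensional node sequence $p_{1,m},p_{2,m},\dots$ in the last coordinate. The observation that both legitimises the explicit recursion~\eqref{Falpha}--\eqref{BETA} and keeps it quadratic is that downward-closedness of $A$ forces every $p\in P_A\setminus H$ to project onto a node of $P_A\cap H$, so $Q_1(p)$ equals the already-sampled value $f(p_{\alpha_1,1},\dots,p_{\alpha_{m-1},m-1},p_{0,m})$; hence $(f(p)-Q_1(p))/Q_H(p)$ is literally a first-order $x_m$-divided difference, i.e.\ $F_{\alpha,m,1}$, and no evaluation of $Q_1$ is ever performed. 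Iterating the split down the last coordinate shows the $i=m$ lines of Definition~\ref{def:DDS} compute the univariate Newton divided differences along each $x_m$-fibre, and — by the inductive hypothesis — the $i<m$ lines are exactly the $(m-1)$-variate scheme applied to those data, so $c_\alpha=F_{\alpha,1,a_1}$ are the Newton coordinates of $Q_{f,A}$. Checking directly that $Q_{f,A}(p)=f(p)$ for $p\in P_A$ then reduces, on each fibre, to the reproduction property of univariate Newton interpolation, again using downward-closedness (so all lower-dimensional sub-interpolants reproduce their data) together with the vanishing of $\prod_{j=0}^{\ell-1}(p_{\alpha_m,m}-p_{j,m})$ once $\ell>\alpha_m$.

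For complexity, each instance of~\eqref{Falpha} is one subtraction and one division, and the number of admissible triples $(\alpha,i,j)$ with $\alpha\in A$, $1\le i\le m$, $1\le j\le a_i$ equals $\sum_{\alpha\in A}\|\alpha\|_1$; since $\alpha\in A$ with $A$ downward closed forces $|A|\ge\prod_{i=1}^m(\alpha_i+1)\ge 1+\|\alpha\|_1$, this is at most $|A|(|A|-1)$, so together with the $|A|$ evaluations $f(p_\alpha)$ the total cost is $\Oc(|A|^2)$. As in the one-dimensional scheme, the differences can be overwritten in place (processing the triples fibre by fibre and level by level in a suitable order), which yields the $\Oc(|A|)$ storage bound.

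I expect the main obstacle to be the middle paragraph: verifying that the index bookkeeping in~\eqref{Falpha}--\eqref{BETA} faithfully realises the recursion of Theorem~\ref{GS} at every level of the tree in Fig.~\ref{Fig:TREE} — in particular correctly tracking the shifted node sequences in the lower-dimensional subproblems, and pinning down that downward-closedness is exactly what collapses the off-hyperplane evaluation of $Q_1$ to a divided difference rather than an $\Oc(|A|)$ polynomial evaluation, which is what would otherwise inflate the runtime into the super-quadratic/cubic range of \citep{cohen2,cohen3}.
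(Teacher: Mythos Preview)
Your proof is correct and follows essentially the same route as the paper: the same hyperplane splitting $A=A_1\sqcup A_2$ along $\{x_m=p_{0,m}\}$, the same key observation that downward-closedness makes $Q_1(p_\alpha)=f(p_{\beta^{\alpha,m,0}})$ an already-sampled value (this is exactly the paper's Step~2), and the same inductive appeal to Theorem~\ref{GS} with the shifted index set $A_2-e_m$ and shifted node sequence in the last coordinate. Your complexity argument via direct counting of triples $(\alpha,i,j)$ together with the bound $\|\alpha\|_1\le|A|-1$ is a bit slicker than the paper's recursive estimate $D_0|A_1|^2+D_1|A_2|+D_0|A_2|^2\le\max\{D_0,D_1\}|A|^2$, and your upfront unitriangular basis argument is an addition the paper only records afterward as a corollary, but neither changes the substance.
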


\begin{proof} Since the statement is classic for $m=1$, we assume $m >1$ and argue by induction on $|A|$. For $|A|=1$ the claim follows immediately.
For  $|A|>1$ we consider $A_1=\li\{\alpha \in A : \alpha_m =0\re\}$, $A_2 = A \setminus A_1$. By decreasing $m$ if necessary and
w.l.o.g.,~we can assume that $A_2  \not = \emptyset$.
Consider the hyperplane
$H =\{(x_1,\dots,x_{m-1},p_{0,m}): (x_1,\dots,x_{m-1}) \in \R^{m-1}\} = Q_H^{-1}(0)$, on which $Q_H(x) = x_m - p_{0,m} \in \Pi_{m,1,1}$, and the canonical transformation $\tau_H : \R^m\lo\R^m$ with $\tau_H(x) = (x_1,\dots,x_m) - (0,\dots,0, p_{0,m})$,
$\tau_H(H) = \R^m \times\{0\}$. Let $\pi_{m-1} : \R^m \lo \R^{m-1}$, $\pi_{m-1}(x_1,\dots,x_m)=(x_1,\dots,x_{m-1})$, be the natural projection and $i_{m-1} : \R^{m-1} \hookrightarrow \R^m$, $(x_1,\dots,x_{m-1}) \mapsto  (x_1,\dots,x_{m-1},0)$,
be the natural inclusion.

{\bf Step 1:} We reduce the interpolation to $H$. We set  $P_1 = \pi_{m-1}\big(\tau_H(P_A\cap H)\big)$ and  $f_0 : \R^{m-1} \lo \R$ with
\begin{equation}\label{ftau}
 f_0(x_1,\dots,x_{m-1}) = f\big(\tau_H^{-1}(i_{m-1}(x_1,\dots,x_{m-1}))\big)=f\big(x_1,\dots,x_{m-1},p_{0,m})\,.
\end{equation}
Let $M_{\alpha}(x) \in \Pi_{A_1}$, $\alpha \in A_1$, be the Newton polynomials with respect to $A_1$, $P_1$. Induction yields that the coefficients $d_\alpha \in \R$ of the unique polynomial
$$ Q_{f_0,A_1}(x_1,\dots,x_{m-1}) = \sum_{\alpha \in A_1 } d_\alpha M_{\alpha}(x_1,\dots,x_{m-1})$$
interpolating $f_0$ in  $P_1$ can be determined in less than $D_0|A_1|^2$ operations, $D_0 \in \R^+$, requiring a linear amount of storage. The Newton polynomials $N_{\alpha} \in \Pi_A$, $\alpha \in A_1$, are given by $i_{m-1}^*\big(\tau_{H}^{*}(N_\alpha)\big) = M_\alpha$. Thus, $N_\alpha(x_1,\dots,x_m) = M_{\alpha}\big(i_{m-1}(\tau_{H}(x_1,\dots,x_{m}))\big) = M_{\alpha}(x_1,\dots,x_{m-1})$.
We set
\begin{equation}\label{poly1}
Q_1(x_1,\dots,x_m) := Q_{f_0,A_1}(x_1,\dots,x_{m-1}).
\end{equation}
Then,
$Q_1(x_1,\dots,x_m)= \sum_{\alpha\in A_1}d_{\alpha}N_{\alpha}(x_1,\dots,x_m)$ satisfies $Q_1(p) =f(p)$, $\forall p \in P_A \cap H$.

{\bf Step 2:} By definition, $Q_1$ is constant in direction $x_m$, i.e., $Q_1(x_1,\dots,x_{m-1},y) =Q_{f_0,A_1}(x_1,\dots,x_{m-1})$ for all $y \in \R$. Further, each
$\alpha \in A_2$ is given as $\alpha= \beta + (0,\dots,0,i)$ for exactly one $\beta = \beta^{\alpha,m,0} \in A_1$, $i \in \N$, as in Eq.~\eqref{BETA}. Thus, Eq.~\eqref{ftau} implies
$Q_1(p_{\alpha}) = f(p_{\beta^{\alpha,m,0}})$.
Setting $f_1(x) = (f(x) - Q_1(x))/Q_H(x)$, it then requires $D_1|A_2|$, $D_1 \in \R^+$ operations to compute all values $F_{\alpha,1,m}$ from Eq.~\eqref{Falpha} due to
\begin{equation*}
f_1(p_{\alpha})=\frac{f(p_{\alpha}) - Q_1(p_{\alpha})}{Q_H(p_{\alpha})}=\frac{f(p_{\alpha}) - f(p_{\beta^{\alpha,m,0}})}{p_{\alpha_m,m}-p_{0,m}} =
\frac{F_{\alpha,m,0}- F_{\beta^{\alpha,m,0},m,0}}{p_{\alpha_m,m}-p_{0,m}} =F_{\alpha,m,1} \quad \text{for all} \,\, \alpha \in A_2\,.
\end{equation*}
We set $\widetilde A_2 = A_2 - e_m$, $e_m=(0,\dots,0,1)\in \N^m$, and $P_{\widetilde A_2} =\{\tilde p_\gamma\}_{\gamma \in \widetilde A_2 }$ with $\tilde p_\gamma = p_{\gamma + e_m} \in P_2$
for all $\gamma \in  \widetilde A_2$.
Denote by $K_{\gamma}(x) \in \Pi_{\widetilde A_2}$ the Newton polynomials with respect to $\widetilde A_2, P_{\widetilde A_2}$. Then,
induction yields that the coefficients
$b_\gamma \in \R$, $\gamma \in \widetilde A_2$, of the unique polynomial
$$Q_2(x_1,\dots,x_m):=Q_{f_1, \widetilde A_2}(x_1,\dots,x_m)= \sum_{\gamma \in \widetilde A_2}b_\gamma K_{\beta}(x_1,\dots,x_m)$$
interpolating $f_1$ in  $P_2=P_{\widetilde A_2}$ can be determined in less than $D_0|A_2|^2$ operations, requiring linear storage. Due to Eq.~\eqref{Newt}, we observe that  $Q_H(x)K_\gamma(x) = N_{\gamma+e_m}(x)$ for all $\gamma \in \widetilde A_2$. While $P_A$ is unisolvent due to Corollary \ref{cor:grid},
Theorem \ref{theorem:PIP} implies that
the unique polynomial $Q\in \Pi_A$ interpolating $f$ in $P_A$ is given by:
\begin{equation}
  Q_{f,A}(x) =Q_1(x) + Q_H(x)Q_2(x)
  =\sum_{\alpha \in A_1}d_\alpha N_{\alpha}(x) + Q_H(x)\sum_{\gamma\in \widetilde A_2}h_\gamma K_{\beta}(x)  =  \sum_{\alpha \in A}c_\alpha N_{\alpha}(x)\,, \label{recurs}
\end{equation}
where $c_\alpha = d_\alpha$ for $\alpha \in A_1$ and $c_\alpha = h_{\alpha-e_m}$ for $\alpha \in A_2$. Due to Definition~\ref{def:DDS}, recursion of this inductive argument yields that $c_\alpha = F_{\alpha,\alpha_1,1}$ $\forall \alpha \in A$.
In total, the computation can hence be done in less than $D_0|A_1|^2 + D_1|A_2|+D_0|A_2|^2\leq \max\{D_0,D_1\}(|A_1|+|A_2|)^2 \in \Oc(|A|^2)$ operations and $\Oc(|A_1| + |A_2|))=\Oc(|A|)$ storage.
\end{proof}

Theorem \ref{theo:MIP} implies that every polynomial $Q \in \Pi_A$ can be uniquely expanded as $Q=\sum_{\alpha \in A}c_{\alpha}N_{\alpha}$, meaning that
  the Newton polynomials$ \{N_{\alpha}\}_{\alpha \in A} \subseteq \Pi_A$
  are a basis of $\Pi_A$. While evaluating a multivariate polynomial in the canonical basis requires finding a suitable factorisation of a multivariate Horner scheme \citep[see, e.g.,][]{Stoer,gautschi,Jannik},
evaluation and differentiation are straightforward in Newton basis:

\begin{theorem}[Evaluation and differentiation in Newton basis] \label{theo:DIFF} Let $A\subseteq \N^m$ be downward closed, $P_A\subseteq \square_m$ unisolvent nodes as in  Corollary~\ref{cor:grid}, $Q(x) = \sum_{\alpha \in A}c_\alpha N_{\alpha} \in \Pi_A$,
$c_\alpha \in \R$, a polynomial in Newton basis, and $x_0 \in \R^m$. Then:
\begin{enumerate}[label=\roman*)]
 \item\label{C1} there is a recursive algorithm requiring $\Oc(|A|)$ operations and $\Oc(|A|)$ storage to evaluate $Q$ at $x_0$;
 \item\label{C2} there is an iterative algorithm requiring $\Oc(m|A|)$ operations and $\Oc(|A|)$ storage to evaluate $Q$ at $x_0$;
 \item\label{C3} there is an iterative algorithm requiring  $\Oc(nm|A|)$ operations and $\Oc(|A|)$ storage to evaluate the partial derivative $\partial_{x_j}Q$, $1 \leq j \leq m$, at $x_0$.
\end{enumerate}
\end{theorem}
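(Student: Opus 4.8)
The plan is to prove all three claims by exploiting the product structure of the Newton polynomials $N_\alpha(x)=\prod_{i=1}^m\prod_{j=0}^{\alpha_i-1}(x_i-p_{j,i})$ together with the hyperplane–splitting recursion already used in the proof of Theorem~\ref{theo:MIP}. \emph{Part \ref{C1}.} For the recursive evaluation I would read the divided-difference tree of Fig.~\ref{Fig:TREE} in ``forward'' mode. Let $i^\ast$ be the largest coordinate index with $n_{i^\ast}:=\max_{\alpha\in A}\alpha_{i^\ast}\ge1$, put $A_1=\{\alpha\in A:\alpha_{i^\ast}=0\}$, $A_2=A\setminus A_1$, $\widetilde A_2=A_2-e_{i^\ast}$, and use the factorisation $N_{\gamma+e_{i^\ast}}(x)=(x_{i^\ast}-p_{0,i^\ast})\,K_\gamma(x)$, where $K_\gamma$ ($\gamma\in\widetilde A_2$) are the Newton polynomials attached to $\widetilde A_2$ on the shifted nodes exactly as in the proof of Theorem~\ref{theo:MIP}. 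This gives
\[
Q(x_0)=\Big(\sum_{\alpha\in A_1}c_\alpha N_\alpha\Big)(x_0)\;+\;(x_{0,i^\ast}-p_{0,i^\ast})\,\Big(\sum_{\gamma\in\widetilde A_2}c_{\gamma+e_{i^\ast}}K_\gamma\Big)(x_0),
\]
the two parenthesised sums being evaluated by the same recursion on $A_1$ and on $\widetilde A_2$, with base case $A=\{0\}$, $Q\equiv c_0$. Since $0\in A$ forces $A_1\neq\emptyset$ and $n_{i^\ast}\ge1$ forces $A_2\neq\emptyset$, both children are non-empty and strictly smaller, so the call tree is binary with exactly $|A|$ leaves, hence $|A|-1$ ``combine'' steps of $O(1)$ arithmetic each; the recursion depth is $\le|A|$ with $O(1)$ scalars per frame, giving $O(|A|)$ storage.

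\emph{Part \ref{C2}.} For the iterative variant I would first tabulate, for each coordinate $i$ with $n_i\ge1$ and each $0\le t\le n_i$, the one-dimensional partial products $\pi_{i,t}:=\prod_{k=0}^{t-1}(x_{0,i}-p_{k,i})$ via $\pi_{i,t+1}=\pi_{i,t}(x_{0,i}-p_{t,i})$, at cost $O(\sum_i n_i)$ in time and space. The combinatorial heart here is the bound $\sum_i n_i<|A|$: downward closedness embeds the pairwise disjoint axis segments $\{te_i:1\le t\le n_i\}$, together with $0$, into $A$. Then $N_\alpha(x_0)=\prod_{i=1}^m\pi_{i,\alpha_i}$ is assembled in $O(m)$ operations per $\alpha$ (coordinates with $n_i=0$ contribute the factor $1$ and are skipped), and $Q(x_0)=\sum_{\alpha\in A}c_\alpha N_\alpha(x_0)$ costs $O(|A|)$ more; in total $O(m|A|)$ operations and $O(|A|)$ storage.

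\emph{Part \ref{C3}.} For $\partial_{x_j}Q(x_0)$ I would additionally tabulate the derivatives of the one-dimensional Newton polynomials $\nu_{j,t}(x):=\prod_{k=0}^{t-1}(x-p_{k,j})$ using $\nu_{j,t+1}'(x)=\nu_{j,t}'(x)(x-p_{t,j})+\nu_{j,t}(x)$, which yields all $\nu_{j,t}'(x_{0,j})$, $t\le n_j$, in $O(n_j)$ operations from the already-computed $\nu_{j,t}(x_{0,j})=\pi_{j,t}$. Since $\partial_{x_j}N_\alpha(x_0)=\nu_{j,\alpha_j}'(x_{0,j})\prod_{i\ne j}\pi_{i,\alpha_i}$, assembling all of these and then forming $\partial_{x_j}Q(x_0)=\sum_{\alpha}c_\alpha\,\partial_{x_j}N_\alpha(x_0)$ stays within $O(m|A|)\subseteq O(nm|A|)$ operations and $O(|A|)$ storage; a cruder per-index expansion of the product rule into $\alpha_j\le n$ summands reproduces the stated $O(nm|A|)$ figure and, for $k$-th order, its $O(mn^k|A|)$ generalisation.

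\emph{Main obstacle.} The mathematics is light; the work is in the accounting. For \ref{C1} one must verify that the splitting $A=A_1\sqcup(\widetilde A_2+e_{i^\ast})$ is exact and both parts non-empty, so the call tree really has only $|A|$ leaves; for \ref{C2}--\ref{C3} one must pin the precomputation cost and, crucially, the storage to $O(|A|)$ through the inequality $\sum_i n_i<|A|$ and the reduction to coordinates that actually occur. The fiddliest step to write out in full is the product-rule bookkeeping in \ref{C3}, i.e., showing that every $\partial_{x_j}N_\alpha(x_0)$ is available in $O(m)$ amortised operations after an $O(|A|)$ precomputation.
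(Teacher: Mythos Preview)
Your proposal is correct and, for parts \ref{C1} and \ref{C2}, essentially identical to the paper's argument: the same recursive splitting $Q=Q_1+Q_HQ_2$ for \ref{C1}, and the same precomputation of the one-dimensional prefix products for \ref{C2}. Your bound $\sum_i n_i<|A|$ for the precomputation cost is in fact sharper and more robust than the paper's claim $|A|\ge mn$, which need not hold for arbitrary downward-closed $A$.

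For part \ref{C3} you go a slightly different route. The paper simply expands the product rule as
\[
\partial_{x_j}Q(x_0)=\sum_{\alpha\in A}c_\alpha\Big(\prod_{i\ne j}q_{i,\alpha_i-1}\Big)\sum_{h=0}^{\alpha_j-1}\prod_{\substack{l=0\\ l\ne h}}^{\alpha_j-1}(x_{0,j}-p_{l,j}),
\]
giving the $\Oc(nm|A|)$ count directly from the inner sum of length $\alpha_j\le n$. Your primary method instead precomputes the derivatives $\nu_{j,t}'(x_{0,j})$ by the two-term recurrence $\nu_{j,t+1}'=\nu_{j,t}'(x-p_{t,j})+\nu_{j,t}$, which in fact yields the stronger $\Oc(m|A|)$ bound and immediately extends to higher orders. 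You already note that the cruder expansion reproduces the paper's stated $\Oc(nm|A|)$, so both arguments coexist in your write-up; your recurrence-based variant is the cleaner one and buys a factor $n$ at no extra cost.
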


\begin{proof}
To prove \ref{C1} we follow the proof of Theorem \ref{theo:MIP} using induction over the number of coefficients. Due to Eq.~\eqref{recurs}, $Q_1$ and $Q_2$
can be evaluated in linear time. Since the evaluation of $Q_H(x)= x_m-p_{0,m}$ requires constant time, the claim follows. To show \ref{C2}, we observe that computing and storing the values of the products
$q_{i,k} = \prod_{j=0}^{k}(x_{0,i}-p_{j,i})$, $x_0 = (x_{0,1},\dots,x_{0,m}) \in \R^m$, $i =1,\dots m$, $k =1,\dots,n$,
requires $\Oc(mn)$ operations. Then
\begin{equation}
 Q(x_0) = \sum_{\alpha \in A} c_{\alpha}N_{\alpha}(x_0) = \sum_{\alpha \in A} c_{\alpha} \prod_{i=1}^m q_{i,\alpha_{i}-1}
\end{equation}
is computable in $\Oc(m|A|)$ operations. Because $|A| \geq mn$, this yields \ref{C2}. Similarly, the partial derivative
\begin{equation}
 \partial_{x_j}Q(x_0) = \sum_{\alpha \in A} c_{\alpha} \prod_{i=1, i\not = j}^m q_{i,\alpha_{i}} \sum_{h=0}^{\alpha_j-1} \hat q_{j,h} \,, \quad \hat q_{j,h} =\prod_{l =0 , l \not =h}^{\alpha_j-1}(x_{0,j}-p_{l,j})\,.
\end{equation}
Hence we obtain \ref{C3}, and the theorem is proven.
\end{proof}

\begin{remark} The recursive splitting $Q=Q_1+Q_HQ_2$ from Eq.~\eqref{recurs},  appearing in \ref{C1},  recovers the classic Aitken-Neville algorithm in dimension $m=1$~\citep{neville}.
We further note that although requiring $\Oc(m|A|)$ runtime, numerical experiments suggest that the iterative algorithm in \ref{C2} is faster in practice, while maintaining the (machine-precision) accuracy achieved by the recursive algorithm from \ref{C1}~\citep{PIP2}.
\end{remark}

\subsection{Multivariate Lagrange interpolation} \label{sec:LAG}

We can also use the above concepts to extend
1D Lagrange interpolation~\citep{berrut} and tensorial $m$D Lagrange interpolation \citep{Gasca,sauerL,sauertens,trefethen2019} to the case of $m$D non-tensorial unisolvent nodes.

\begin{definition}[Lagrange polynomials] \label{def:LagP}
 Let  $m \in \N$, $A \subseteq \N^m$ be a downward closed set of multi-indices, and let $P_A = \{p_{\alpha}\}_{\alpha \in A}$ be a unisolvent set of nodes with respect to the polynomial space $\Pi_{A}$.
We define the \emph{multivariate Lagrange polynomials}
\begin{equation}\label{DL}
  L_{\alpha} \in \Pi_{P_A}\ \quad \text{with}\quad L_{\alpha}(p_\beta)= \delta_{\alpha,\beta}\, , \,\,\, \alpha,\beta \in A\,,
\end{equation}
where $\delta_{\cdot,\cdot}$ is the Kronecker delta.
\end{definition}

\begin{corollary}[Lagrange basis]\label{cor:Lag} Let the assumptions of Definition \ref{def:LagP} hold. Then:
\begin{enumerate}
 \item[i)]  the Lagrange polynomials $L_\alpha \in \Pi_A$ are a basis of $\Pi_A$;
 \item[ii)] the polynomial $Q_{f,A}(x) = \sum_{\alpha \in A}f(p_{\alpha})L_{\alpha}(x) \in \Pi_{A}$ is the unique polynomial interpolating $f$ in  $P_A$, and it can be determined in $\Oc(|A|)$ operations.
\end{enumerate}
\end{corollary}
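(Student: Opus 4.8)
The plan is to reduce both claims to a single linear-algebra fact: unisolvence of $P_A$ with respect to $\Pi_A$, which is guaranteed by Corollary~\ref{cor:grid}, together with the elementary observation that $\dim \Pi_A = |A|$ because the monomials $\{x^\alpha\}_{\alpha \in A}$ are linearly independent in $\R[x_1,\dots,x_m]$. First I would introduce the evaluation map $\mathrm{ev} : \Pi_A \lo \R^{|A|}$, $\mathrm{ev}(Q) = (Q(p_\beta))_{\beta \in A}$, which is linear between spaces of equal finite dimension $|A|$. By definition, unisolvence says precisely that no nonzero $Q \in \Pi_A$ vanishes on all of $P_A$, i.e. $\ker \mathrm{ev} = \{0\}$; hence $\mathrm{ev}$ is a linear isomorphism. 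Setting $L_\alpha := \mathrm{ev}^{-1}(e_\alpha)$, where $e_\alpha$ is the standard basis vector of $\R^{|A|}$ indexed by $\alpha$, produces the unique polynomials in $\Pi_A$ with $L_\alpha(p_\beta) = \delta_{\alpha,\beta}$, which shows that the multivariate Lagrange polynomials of Definition~\ref{def:LagP} are well defined.

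For part i), I would note that $\{L_\alpha\}_{\alpha \in A}$ is the image under the isomorphism $\mathrm{ev}^{-1}$ of the standard basis $\{e_\alpha\}_{\alpha \in A}$ of $\R^{|A|}$, hence a basis of $\Pi_A$; alternatively, linear independence follows at once by evaluating a relation $\sum_{\alpha} c_\alpha L_\alpha = 0$ at each node $p_\beta$, which forces $c_\beta = 0$, and $|A| = \dim \Pi_A$ such elements then span. For part ii), the polynomial $Q_{f,A} = \sum_{\alpha \in A} f(p_\alpha) L_\alpha$ lies in $\Pi_A$ and satisfies $Q_{f,A}(p_\beta) = \sum_{\alpha} f(p_\alpha)\delta_{\alpha,\beta} = f(p_\beta)$ for every $\beta \in A$, so it interpolates $f$ on $P_A$; uniqueness is immediate since any two interpolants differ by an element of $\ker \mathrm{ev} = \{0\}$ (this also follows from Theorem~\ref{theorem:PIP}). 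For the complexity claim I would simply observe that, once the Lagrange basis is fixed, the coefficients of $Q_{f,A}$ in that basis are literally the sampled values $f(p_\alpha)$, so determining $Q_{f,A}$ requires only the $|A|$ evaluations of $f$ at the nodes of $P_A$ and storing them, i.e. $\Oc(|A|)$ operations --- in contrast to the $\Oc(|A|^2)$ divided-difference recursion needed for the Newton coefficients in Theorem~\ref{theo:MIP}.

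I do not expect a genuine obstacle: the statement is a routine corollary of unisolvence once $\dim \Pi_A = |A|$ is recorded. The only point that deserves a word of care is the meaning of ``determined in $\Oc(|A|)$ operations'': it should be read as producing the Lagrange-coefficient vector $(f(p_\alpha))_{\alpha \in A}$, not as expanding $Q_{f,A}$ in the canonical basis, which would additionally require constructing the $L_\alpha$ themselves.
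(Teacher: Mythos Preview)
Your proposal is correct and follows essentially the same route as the paper: both argue linear independence of the $L_\alpha$ by evaluation at the nodes together with the dimension count $\dim \Pi_A = |A|$, and derive ii) as an immediate consequence. Your framing via the evaluation isomorphism $\mathrm{ev}$ is slightly more polished---in particular it cleanly accounts for the \emph{existence} of the $L_\alpha$, which the paper takes for granted---but your ``alternatively'' argument for i) is exactly what the paper writes, and your reading of the $\Oc(|A|)$ claim matches the intended meaning.
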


\begin{proof} To show $i)$, we note that there are $|A|$ Lagrange polynomials and $\dim \Pi_A = |A|$. Given
$c_{\alpha} \in \R$, $\alpha \in A$, such that
$\sum_{\alpha \in A} c_\alpha L_\alpha = 0$, the unisolvence of $P_A$ implies that the polynomial $Q(x)= \sum_{\alpha \in A} c_\alpha L_\alpha$ vanishes in  $P_A$ and, therefore, has to be the zero polynomial.
Hence, $c_\alpha =0$ for all $\alpha \in A$,
implying that the $L_\alpha \in \Pi_A$ are linearly independent and thus a basis of $\Pi_A$. The formula and the uniqueness in $ii)$ then follow from $i)$.
\end{proof}

\begin{remark}
In the maximum-degree case, $A= A_{m,n,\infty}$, the grid $P_A$ becomes tensorial, and the above definition recovers the known  \emph{tensorial $m$D Lagrange interpolation}:
\begin{equation}\label{LagTens}
  L_{\alpha}(x)= \prod_{i=1}^m l_{\alpha_i,i} (x) \,, \quad l_{j,i} (x) = \prod_{h=0, h \not = j}^n \frac{x_i-p_{h,i}}{p_{\alpha_i,i} - p_{h,i}}  \,,
\end{equation}
where $x = (x_1,\dots,x_i,\dots,x_m) \in \R^m$, $1 \leq i,j\leq n$, $\alpha \in A$.
Using the multivariate Newton interpolation from Theorem~\ref{theo:MIP} with $f = L_{\alpha}$, an explicit expression for the Lagrange polynomials can be derived even in the general case of a downward closed $A\subseteq \N^m$ and non-tensorial grid $P_A$:
\begin{equation}\label{LN}
 L_{\alpha}(x) = \sum_{\beta \in A}c_{\alpha,\beta} N_\beta(x)\,, \quad c_{\alpha,\beta} \in \R\,.
\end{equation}
Thus, Theorem~\ref{theo:DIFF} provides for efficient evaluation and differentiation of the Lagrange interpolant.
\end{remark}

Of course, the question arises which among the
possible unisolvent node sets to choose when aiming to maximize the interpolant's approximation power. We consider this question in the next section.

\section{Leja-ordered nodes and Lebesgue constants}

\begin{figure}[t!]
\center
\includegraphics[scale=1.05]{./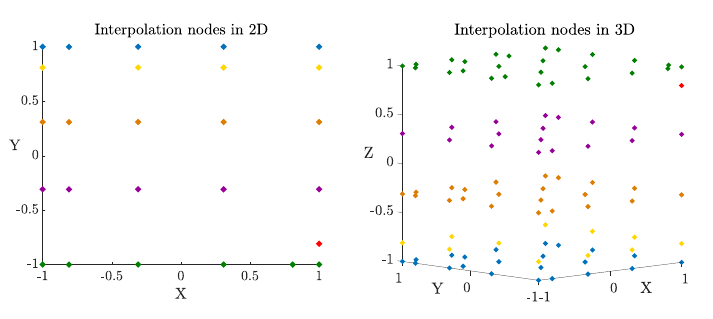}
\caption{Leja-ordered Chebyshev-Lobatto (LCL) nodes $P_A$, $A=A_{m,5,2}$ in 2D ($m=2$, left) and 3D ($m=3$, right).
Nodes in the same horizontal hyperplane are coloured equally.} \label{Nodes}
\end{figure}

The crucial contribution of \cite{Fekete1923} to interpolation and potential theory \cite{Taylor2008, Bos2010} is the notion of \emph{Fekete points}. We recall:

\begin{definition}[Fekete points]
    \label{def:Fekete}
    Let \(1 \leq k \leq n\) and $P_n = \{p_0, p_1, \ldots, p_n\}$ be a set of $n+1$ points. \emph{Fekete points} of order \(k\) are defined as \(k+1\) distinct points
    \[
        F_k \coloneqq \{p_0, p_1, \ldots, p_{k}\} \subset P_n = \{p_0, p_1, \ldots, p_n\}
    \]
    that maximizes the absolute value of the Vandermonde determinant
    \begin{equation}
        \label{eq:determinant-maximization}
        V(p_0,\ldots,p_{k}) \coloneqq \det ((p_i)^j)_{i,j=0,\ldots,k}.
    \end{equation}
    Thus, the Fekete points are (not uniquely) determined by
    \[
        F_k \in \mathrm{argmax}_{x_0,\ldots,x_{k} \in P} |V(x_0,\ldots,x_{k})| = \mathrm{argmax}_{x_0,\ldots,x_{k} \in P} \prod_{0 \leq i<j\leq k} |x_j-x_i|\,.
    \]
\end{definition}
 Fekete points are among the best choices for polynomial interpolation, which is reflected by the following fact:   Let \(\Lambda(F_k)\) denote the Lebesgue constant of Fekete points of order \(k\), and let \(\Lambda(P_n)\) denote the Lebesgue constant of the full set of nodes \(P\). Then
    \begin{equation}
        \label{eq:Lebesgue-Fekete}
        \Lambda(F_k) \leq (k+1)\, \Lambda(P), \quad 0 \leq k \leq n\,,
    \end{equation}
    see \citep{bos2018bernstein}.
However, there are two drawbacks of Fekete points. Firstly, the computation requires  \(\mathcal{O}\big(n^2{n+1 \choose k+1}\big)\) operations.
Secondly, they are not necessarily nested, i.e.,
\[
	F_k \not\subset F_{k+1}, \quad 0 \leq k \leq n\,,
\]
a property that allows bounding the Lebesgue constant for interpolation in downward closed polynomial spaces.
We propose two alternative choices of nodes, relaxing the notion of Fekete points and (partially) overcoming the stated issues.
To do so, we revisit the concept of Leja points \citep{leja}.
\begin{definition}[Leja-ordered points]\label{EA}
    Let \(K \subseteq \R\) be a compact set and $LP_n = \{p_0,\ldots,p_n\} \subseteq K$ such that
    \[
        |p_0| = \max_{p \in K} |p|, \quad \prod_{j=0}^{l-1} |p_l - p_j| = \max_{p \in K} \prod |p - p_j|, \quad p_l \in K, \quad 1 \leq l \leq n.
    \]
    then $LP_n$ are called Leja points \citep{leja} with respect to $K$ or shortly Leja points for $K=\square_1$. In case where $K=P_n$ is a set of cardinality $n+1$, we call the resulting ordered set $P_n^{\mathrm{Leja}_\geq}$ Leja ordered and specifically denote
    \begin{equation*}
        \Cheb_n^{\mathrm{Leja}_\geq} = \li\{ \cos\Big(\frac{k\pi}{n}\Big) :  0 \leq k \leq n\re\}^{\mathrm{Leja}_\geq}
    \end{equation*}
    in case of $P_n = \Cheb_n$.

Let $A\subseteq \N^m$, $m \in \N$, be downward closed.
Generating non-tensorial grids $P_A$ (see Corollary~\ref{cor:grid}) from $P_i = LP_n$ or $\Cheb_n^{\mathrm{Leja}_\geq}$ yields the \emph{Leja points (LP nodes)} or \emph{Leja-ordered Chebyshev-Lobatto nodes (LCL nodes)}, respectively.
\end{definition}

Examples of LCL nodes $P_A \subseteq \square_m$ are shown in Fig.~\ref{Nodes} in 2D (left) and 3D (right). Similarly, Leja-ordered versions of Fekete or Legendre nodes can be used to generate unisolvent grids $P_A$. While in 1D, the ordering of the points has no influence on the Lebesgue constant, the situation changes in $m$D.
The following bounds on the Lebesgue constants are crucial for proving the approximation rates of BLT-function interpolation, as formulated in \ref{CC2}.

\begin{lemma}\label{lemma:LEB} Let $P_n = \{p_0,\dots,p_n\}\subseteq [-1,1]$, $|P_n|=n+1$, $n \in \N$ be a set of nodes. We denote with $P_{h}=\{p_0,\dots,p_{h}\}$, $h < n$ the truncation to the first $h+1$ nodes.
\begin{enumerate}[label=\roman*),left=0pt]
\item\label{L1}
Let $A\subseteq \N^m$, $m \in \N$,  be downward closed and $P_A \subseteq \square_m$ be unisolvent nodes generated by
$P_i \subseteq \square_1$, $i=1,\dots,m$, according to  Corollary~\ref{cor:grid}. Then
the Lebesgue constant
$$\Lambda(P_A) = \sup_{f \in C^0(\square_m), \|f\|_{C^0(\square_m)} \leq 1 }\|\mathcal{Q}_{P_A}f\|_{C^0(\square_m)}\,,$$
given as the operator norm of the interpolation operator $\mathcal{Q}_{P_A} : C^{0}(\square_m) \to \Pi_A \subseteq C^{0}(\square_m)$, $ f \mapsto Q_{f,P_A}$, scales as
\begin{equation}
  \Lambda(P_A) = \sup_{x \in \square_m} \sum_{\alpha \in A} |L_\alpha(x)| = \Oc\big(|A|^{\theta+1}\big) \,,
\end{equation}
whenever $\Lambda(P_{i,h_i}) \leq (n_i +1)^\theta$, $\forall \, 0 \leq h_i \leq n_i=|P_{i}|$, $i=1,\dots,m$ and some $\theta \geq 1$.
In the maximum-degree case $A = A_{m,n,\infty}$,  $\Lambda(P_A) = \Oc(\prod_{i=1}^m\Lambda(P_i))$.
\item \label{L2} When extending $\mathcal{Q}_{P_A} : C^{k}(\square_m) \to \Pi_A \subseteq C^{k}(\square_m)$ up to $k$-th order derivatives, $k \in \N$, the $k$-th order Lebesgue constant
$\Lambda(P_A)_k = \sup_{f \in C^k(\square_m), \|f\|_{C^k(\square_m)} \leq 1 }\|\mathcal{Q}_{P_A}f\|_{C^k(\square_m)}$
is bounded by
\begin{equation}\label{LEB}
  \Lambda(P_A)_k = \sum_{\beta \in A_{m,k,1}}\sup_{x \in \square_m} \sum_{\alpha \in A} |\partial_\beta L_\alpha(x)| = \Oc\Big(\binom{m+k}{k}|A|^{\theta +2k+1}\Big)\,,
\end{equation}
whereas $\Oc(\binom{m+k}{k}n^{2k}\prod_{i=1}^m\Lambda(P_i))$
applies in the maximum-degree case $A = A_{m,n,\infty}$.
\end{enumerate}
\end{lemma}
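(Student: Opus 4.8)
The plan is to prove \ref{L1} first, by establishing the stated product/downward-closed bound on $\sup_{x\in\square_m}\sum_{\alpha\in A}|L_\alpha(x)|$, and then to obtain \ref{L2} by differentiating the Newton representation of the Lagrange polynomials and controlling the extra factors that the derivatives introduce. For \ref{L1}, the first step is the elementary observation that interpolation in a downward closed space with the grid $P_A$ from Corollary~\ref{cor:grid} can be built recursively along the last coordinate exactly as in the proof of Theorem~\ref{theo:MIP}: splitting $A = A_1 \sqcup A_2$ with $A_1 = \{\alpha \in A : \alpha_m = 0\}$ and $\widetilde A_2 = A_2 - e_m$, the Lagrange polynomials decompose via the relation $Q_{f,A} = Q_1 + Q_H\,Q_2$, where $Q_1$ interpolates on the hyperplane slice and $Q_2$ interpolates the divided difference $f_1 = (f-Q_1)/Q_H$ on the complementary grid. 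Tracking this through and using $\|Q_H\|_{C^0(\square_m)}\le 2$ together with the lower bound $|Q_H(p)| \ge |p_{\alpha_m,m}-p_{0,m}|$ for $p\in P_A\setminus H$, one gets a recursion of the shape $\Lambda(P_A) \lesssim \Lambda(P_{A_1})\cdot\big(1 + \text{const}\cdot\Lambda(P_{\widetilde A_2})\big)$, or more precisely a sum $\sum_{\alpha\in A}|L_\alpha(x)| \le \sum_{\alpha\in A_1}|L^{(1)}_\alpha| + \|Q_H\|\sum_{\gamma\in\widetilde A_2}|L^{(2)}_\gamma|\cdot(\text{Lebesgue factor from forming }f_1)$. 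Unwinding the recursion over the $|A|$-many "layers" — each layer being one 1D interpolation problem whose partial Lebesgue constant is bounded by $(n_i+1)^\theta \le |A|^\theta$ by hypothesis — and bounding the number of recursion steps crudely by $|A|$, yields $\Lambda(P_A) = \Oc(|A|^{\theta+1})$. In the maximum-degree case the grid is genuinely tensorial, so $L_\alpha(x) = \prod_i l_{\alpha_i,i}(x_i)$ and $\sum_{\alpha\in A_{m,n,\infty}}|L_\alpha(x)| = \prod_{i=1}^m\big(\sum_{j=0}^n |l_{j,i}(x_i)|\big) \le \prod_{i=1}^m \Lambda(P_i)$, which is the claimed product bound and needs no recursion.

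For \ref{L2}, the starting point is the explicit Newton expansion $L_\alpha(x) = \sum_{\beta\in A}c_{\alpha,\beta}N_\beta(x)$ from Eq.~\eqref{LN}, together with the product formula $N_\beta(x) = \prod_{i=1}^m\prod_{j=0}^{\beta_i-1}(x_i-p_{j,i})$. I would first bound the Newton coefficients: since $c_{\alpha,\bullet}$ are the divided differences of $L_\alpha$ and $\|L_\alpha\|_{C^0} \le \Lambda(P_A)$, and since each divided difference step divides by a node gap bounded below by the minimal gap $\delta = \min_{i,j\ne k}|p_{j,i}-p_{k,i}|$, one gets $|c_{\alpha,\beta}| \lesssim \delta^{-|A|}\Lambda(P_A)$ in the worst case — but this is too lossy. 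A cleaner route is to bound $\|L_\alpha\|_{C^k(\square_m)}$ directly using a multivariate Markov brothers inequality: for any $Q\in\Pi_A \subseteq \Pi_{m,N,\infty}$ with $N = \max_{\alpha\in A}\|\alpha\|_\infty \le |A|$, one has $\|\partial_\beta Q\|_{C^0(\square_m)} \le C(m,k)\, N^{2\|\beta\|_1}\|Q\|_{C^0(\square_m)}$ for $\|\beta\|_1 \le k$, because on each of the $m$ coordinates the univariate Markov inequality contributes a factor $N^{2\beta_i}$. Applying this with $Q = L_\alpha$ gives $\sup_x|\partial_\beta L_\alpha(x)| \le C(m,k)\,|A|^{2k}\,\|L_\alpha\|_{C^0(\square_m)}$, hence $\sum_{\alpha\in A}\sup_x|\partial_\beta L_\alpha(x)| \le C(m,k)\,|A|^{2k}\Lambda(P_A)$ by \ref{L1}. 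Summing over the $\binom{m+k}{k}$ multi-indices $\beta\in A_{m,k,1}$ and inserting $\Lambda(P_A) = \Oc(|A|^{\theta+1})$ yields the stated $\Oc\big(\binom{m+k}{k}|A|^{\theta+2k+1}\big)$. For the maximum-degree refinement, one combines the tensor-product bound $\Lambda(P_{A_{m,n,\infty}}) = \Oc(\prod_i\Lambda(P_i))$ with the Markov factor $n^{2k}$ (here $N = n$), giving $\Oc\big(\binom{m+k}{k}n^{2k}\prod_i\Lambda(P_i)\big)$.

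I expect the main obstacle to be making the recursion in \ref{L1} genuinely yield only one extra power of $|A|$ rather than an exponential blow-up: the naive estimate $\Lambda(P_A) \lesssim \Lambda(P_{A_1})\cdot\Lambda(P_{\widetilde A_2})$ would give a product over up to $|A|$ layers, which is far too weak. The resolution is to recognize that the recursion is \emph{additive} in the right variable — the sum $\sum_\alpha|L_\alpha(x)|$ splits as a sum over layers, each layer contributing a single 1D partial-Lebesgue factor $\le |A|^\theta$ multiplied by a product of at most $\|Q_H\|_{C^0}/\delta_i$ gap-ratios that, for the node families in question (and as one sees from the hypothesis $\Lambda(P_{i,h_i})\le(n_i+1)^\theta$ which already absorbs these ratios), is itself controlled by $|A|^\theta$ — so the total is a sum of $|A|$ terms each of size $\Oc(|A|^\theta)$, i.e. $\Oc(|A|^{\theta+1})$. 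Getting this bookkeeping exactly right, and in particular verifying that the hypothesis on the truncated 1D Lebesgue constants is precisely what is needed to close it at the $\theta+1$ exponent, is the delicate part; everything downstream (the Markov step for \ref{L2}, the counting of $\beta$'s, the tensorial specialisation) is routine.
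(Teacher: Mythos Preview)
Your treatment of \ref{L2} is essentially the paper's argument: apply the univariate Markov inequality coordinate-wise to each $L_\alpha\in\Pi_A$, pick up a factor $n^{2\|\beta\|_1}\le |A|^{2k}$, sum over $\alpha$ to recover $\Lambda(P_A)$, and sum over the $\binom{m+k}{k}$ choices of $\beta$. The detour through the Newton expansion and divided-difference bounds on $c_{\alpha,\beta}$ is unnecessary and, as you note, too lossy; the Markov route is the right one and is what the paper does.

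For \ref{L1}, however, your approach diverges from the paper's and carries a real gap. The paper does not run the Newton-splitting recursion of Theorem~\ref{theo:MIP} at all; it simply invokes the estimate of Chkifa--Cohen--Schwab (the reference \texttt{cohen2}), whose mechanism is structurally different from yours. Their argument writes the interpolation operator as a telescoping sum of tensorised \emph{difference} operators,
\[
\mathcal{Q}_{P_A}=\sum_{\alpha\in A}\bigotimes_{i=1}^m\Delta_{\alpha_i}^{(i)},\qquad \Delta_j^{(i)}=\mathcal{I}_j^{(i)}-\mathcal{I}_{j-1}^{(i)},
\]
so that $\|\mathcal{Q}_{P_A}\|\le\sum_{\alpha\in A}\prod_i\big(\Lambda(P_{i,\alpha_i})+\Lambda(P_{i,\alpha_i-1})\big)$, and then uses the combinatorial fact that $\sum_{\alpha\in A}\prod_i(\alpha_i+1)^\theta\le |A|^{\theta+1}$ for any downward closed $A$. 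This is genuinely additive over $A$ from the start, and no node-gap ratios ever appear.

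By contrast, your recursion through $Q_{f,A}=Q_1+Q_H Q_2$ forces you to bound $\|f_1\|_{C^0}=\|(f-Q_1)/Q_H\|_{C^0}$ on $P_A\setminus H$, and the denominator $|Q_H(p)|=|p_{\alpha_m,m}-p_{0,m}|$ can be arbitrarily small relative to $\|Q_H\|_{C^0}\le 2$. Your claim that the hypothesis $\Lambda(P_{i,h_i})\le(n_i+1)^\theta$ ``already absorbs these ratios'' is not justified: the truncated Lebesgue constants control sums of products of such ratios, not individual ratios, and there is no clean way to extract a single factor $\le|A|^\theta$ per layer from them. Unwinding your recursion honestly produces a \emph{product} over layers of factors of the form $1+C/\delta$, not a sum, and this is exactly the exponential blow-up you worried about. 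The fix is not better bookkeeping within your scheme but switching to the difference-operator decomposition, which sidesteps the division by $Q_H$ entirely. The tensorial maximum-degree case you handle correctly, and that part matches the paper.
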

\begin{proof}
\ref{L1} follows from the estimates of \cite{cohen2}. By a standard tensorial argument \citep{HOSQ}, the maximum-degree case follows.
 \ref{L2} relies on Markov's inequality \citep{markov1889problem}, stating that the derivative of any polynomial $p_n \in \Pi_{1,n,1}$, $n \in \N$, is bounded by $p_n$ itself as $\|p_n'\|_{C^0(\square_1)} \leq n^2 \|p_n\|_{C^0(\square_1)}$. Recalling $|A_{m,k,1}| = \binom{m+k}{k} \in \Oc(m^k)\cap \Oc(k^m)$ yields the stated bound.
\end{proof}

We note:
\begin{enumerate}[left=0pt,label=\textbf{N\arabic*)}]
\item When choosing LCL nodes, $P_i = \Cheb_n^{\mathrm{Leja}_\geq}$, $i=1,\dots,m$, we have the estimate $\Lambda(\Cheb_n)=\frac{2}{\pi}\big(\log(n+1) + \gamma +  \log(8/\pi)\big) + \Oc(1/n^2)$,
where $\gamma \approx  0.5772$ is the Euler-Mascheroni constant \citep{brutman2, trefethen2019}. However, though the Lebesgue constant of the truncations of
$\Cheb_n^{\mathrm{Leja}_\geq}$ seem to be be bounded linearly, no explicit bound is known.

\item For the nested LP nodes, $P_i =LP_n$, $i=1,\dots,m$, algebraic bounds of $\Lambda(LP_n) = \Oc(n^{13/4})$ apply \citep{LejaLEB}.

\item Based on Eq.~\eqref{LN}, we numerically measure the Lebesgue constants at 10,000 random points.
Figure~\ref{LEB1-3} shows that Lebesgue constants for the total and Euclidean degrees $A = A_{m,n,p}$, $p=1,2$ grow sub-exponentially, whereas those for the maximum degree (blue lines) follow the estimated logarithmic scaling.
\end{enumerate}

\begin{figure}[htp!]
\center
\includegraphics[scale=0.4]{./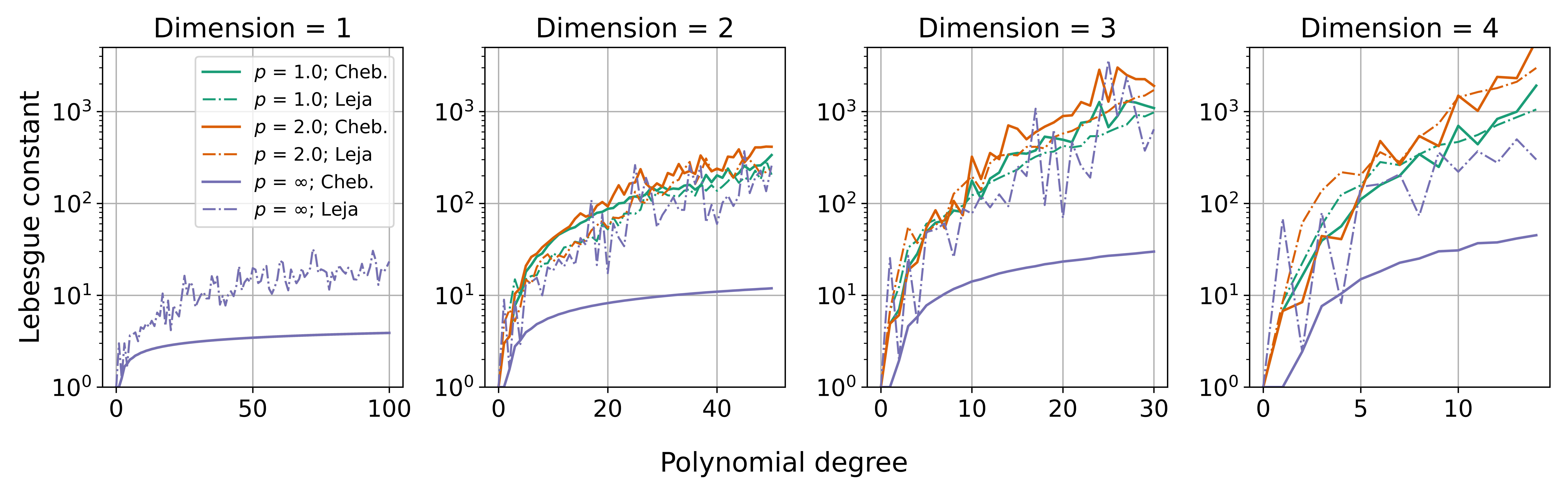}
\caption{Numerically measured Lebesgue constants for LCL nodes (solid lines) and LP nodes (dashed lines) $P_A \subseteq \square_m$ for total ($p=1$, green), Euclidean ($p=2$, red), and
maximum degree ($p=\infty$, blue) polynomial interpolants in dimensions $m =1,\dots,4$ (panels from left to right).} \label{LEB1-3}
\end{figure}

Beside the theoretical gap of bounding the Lebesgue constants of LCL nodes, the results suggest LCL and LP nodes to deliver similar approximation power.
With these ingredients we state:

\begin{theorem}\label{theo:APP} Let $m,n \in \N$, $p >0$, $A =A_{m,n,p} \subseteq \N^m$, and  $f : \square_m \lo \R$ be a BLT-function with approximation rate
\begin{equation*}
      \|f - p_n^*\|_{C^0(\square_m)}  \lesssim \rho^{-n}\,,  \quad \rho = \rho_p >1 \,,
  \end{equation*}
  where $ p_{n}^* \in \Pi_{m,n,p}$ denotes the best approximation. Given unisolvent nodes $P_A\subseteq \square_m$, satisfying \ref{L1} of Lemma~\ref{lemma:LEB} (e.g, LP nodes). Then:
  \begin{enumerate}[label=\roman*),left=0pt]
      \item\label{T1} the approximation error of the interpolant $Q_{f,P_A}$ of $f$ is bounded by
      \begin{equation*}
        \| f - Q_{f,P_A}\|_{C^0(\square_m)}  \lesssim (1 + \Lambda(P_A))\rho^{-n} = \Oc_{\ee}(\rho^{-n})\,,
      \end{equation*}
      where $\Lambda(P_A)$ is given in Lemma~\ref{lemma:LEB}\ref{L1}, respectively.
      \item\label{T2} for any order $k \in \N$, the derivatives  interpolants approximate the derivatives of $f$ with
      \begin{equation*}
        \| f- Q_{f,P_A}\|_{C^k(\square_m)} \lesssim (1 +\Lambda(P_A)_k) \rho^{-n} = \Oc_{\ee}(\rho^{-n})\,,
      \end{equation*}
     where $\Lambda(P_A)_k$ is given in Lemma~\ref{lemma:LEB}\ref{L2}.
  \end{enumerate}
\end{theorem}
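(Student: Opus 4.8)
The plan is to reduce everything to the classical Lebesgue-constant estimate for interpolation error, combined with the bounds on $\Lambda(P_A)$ and $\Lambda(P_A)_k$ already established in Lemma~\ref{lemma:LEB}. For part~\ref{T1}, I would start from the standard observation that $\mathcal{Q}_{P_A}$ is a linear projection onto $\Pi_{m,n,p}$: since the best approximation $p_n^* \in \Pi_{m,n,p}$ is fixed by the interpolation operator, $\mathcal{Q}_{P_A} p_n^* = p_n^*$. Writing
\begin{equation*}
  f - Q_{f,P_A} = (f - p_n^*) - \mathcal{Q}_{P_A}(f - p_n^*)
\end{equation*}
and taking $C^0(\square_m)$-norms gives $\|f - Q_{f,P_A}\|_{C^0(\square_m)} \leq (1 + \Lambda(P_A))\,\|f - p_n^*\|_{C^0(\square_m)} \lesssim (1+\Lambda(P_A))\rho^{-n}$. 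The final step is to absorb the polynomially-growing factor $1 + \Lambda(P_A) = \Oc(|A|^{\theta+1})$ into the geometric term: because $|A| = |A_{m,n,p}|$ grows only polynomially (or sub-exponentially) in $n$ for fixed $m$, for any $\ee > 0$ we have $|A|^{\theta+1}\rho^{-n} \in \Oc((\rho-\ee)^{-n})$, which is exactly the meaning of $\Oc_\ee(\rho^{-n})$ as defined after Theorem~\ref{theo:Nick}.

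For part~\ref{T2}, I would run the same argument in the $C^k$-norm rather than the $C^0$-norm. The key point is that $p_n^*$ still lies in $\Pi_{m,n,p}$, hence is still reproduced by $\mathcal{Q}_{P_A}$, so the identity $f - Q_{f,P_A} = (f - p_n^*) - \mathcal{Q}_{P_A}(f-p_n^*)$ holds verbatim; applying $\|\cdot\|_{C^k(\square_m)}$ and the definition of the $k$-th order Lebesgue constant $\Lambda(P_A)_k$ from Lemma~\ref{lemma:LEB}\ref{L2} yields $\|f - Q_{f,P_A}\|_{C^k(\square_m)} \leq (1+\Lambda(P_A)_k)\,\|f - p_n^*\|_{C^k(\square_m)}$. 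Here, however, I need the best-approximation bound in the $C^k$-norm, not just $C^0$: I would obtain $\|f - p_n^*\|_{C^k(\square_m)} \lesssim \rho^{-n}$ either by invoking the fact that a BLT-function, being holomorphic in the open neighbourhood $\Omega_\rho(P,K)$, admits for every $\rho' < \rho$ a best polynomial approximation whose derivatives of all orders also converge geometrically at rate $(\rho')^{-n}$ (Cauchy estimates on a slightly smaller domain give this, and $\rho'$ can be taken arbitrarily close to $\rho$ at the cost of the $\Oc_\ee$ slack), or — more self-containedly — by applying Markov's inequality (as already used in Lemma~\ref{lemma:LEB}) to the polynomial $p_n^* - p_{n-1}^*$ telescopically, which bounds $\|f - p_n^*\|_{C^k(\square_m)}$ by $\Oc(n^{2k})\rho^{-n}$, another factor that is harmlessly absorbed into $\Oc_\ee(\rho^{-n})$. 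Then the polynomial factor $(1+\Lambda(P_A)_k) = \Oc(\binom{m+k}{k}|A|^{\theta+2k+1})$ is absorbed exactly as in part~\ref{T1}.

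I expect the main obstacle to be the $C^k$-version of the best-approximation estimate in part~\ref{T2}: Theorem~\ref{theo:bos} and the definition of BLT-functions are stated only for the sup-norm error $\|f - p_n^*\|_{C^0(K)}$, so one has to argue separately that geometric convergence of the function values forces geometric convergence of the derivatives. The cleanest route is the telescoping-plus-Markov argument, which stays entirely within the machinery already cited in the paper (Markov's inequality appears in the proof of Lemma~\ref{lemma:LEB}) and does not require re-deriving anything about holomorphic extensions; the price is merely an extra $n^{2k}$ factor, which is invisible at the level of $\Oc_\ee(\rho^{-n})$. Everything else — the projection identity, the triangle inequality, and the absorption of polynomial factors into the $\ee$-slack — is routine. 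A minor bookkeeping point is to state explicitly that $\Lambda(P_A)$ and $\Lambda(P_A)_k$ are finite for the admissible node families (which is guaranteed once $P_A$ is unisolvent, by Corollary~\ref{cor:Lag}), so that the bounds are not vacuous.
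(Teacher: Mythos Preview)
Your proposal is correct and, for part~\ref{T1}, matches the paper's proof exactly: the paper invokes the classical Lebesgue inequality $\|f-Q_{f,P_A}\|_{C^0}\le(1+\Lambda(P_A))\|f-p_n^*\|_{C^0}$ and then absorbs the polynomial factor from Lemma~\ref{lemma:LEB}\ref{L1} into the $\Oc_\ee$ slack, just as you describe.

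For part~\ref{T2} the paper takes your \emph{first} route: it observes that, since $f$ is the restriction of a function holomorphic in $\Omega_\rho(P,\square_m)$, so are all its partial derivatives, hence each $\partial_\beta f$ is itself a BLT-function with the same $\rho$; the paper then says the bound follows ``analogously to~\ref{T1}'' by substituting the $k$-th order Lebesgue constant from Lemma~\ref{lemma:LEB}\ref{L2}. Your telescoping-plus-Markov argument is not in the paper but is a genuinely cleaner alternative: it stays entirely within the $C^0$ best-approximation hypothesis and the Markov inequality already cited in Lemma~\ref{lemma:LEB}, and it makes explicit the passage from $\|f-p_n^*\|_{C^0}\lesssim\rho^{-n}$ to $\|f-p_n^*\|_{C^k}\lesssim n^{2k}\rho^{-n}$, a step the paper's proof leaves implicit when it jumps from ``$\partial_\beta f$ is BLT'' to the Lebesgue inequality for $f$ (the best approximation of $\partial_\beta f$ is not a priori $\partial_\beta p_n^*$). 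So your second route is more self-contained and arguably fills a small gap in the paper's exposition, at no cost beyond the harmless $n^{2k}$ factor.
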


\begin{proof} Point \ref{T1} follows from Lemma~\ref{lemma:LEB} together with the classic Lebesgue inequality
\begin{align*}
    \| f - Q_{f,P_A}\|_{C^0(\square_m)}   \leq  \| f - p_n^*\|_{C^0(\square_m)} +  \Lambda(P_A) \| f - p_n^*\|_{C^0(\square_m)}\,,
  \end{align*}
where  $p_n^*\in \Pi_{m,n,p}$ denotes the best approximation.

To show \ref{T2},
we note that, by Theorem~\ref{theo:BL}, the function $f =F_{|\square_m}$ is
the restriction of a function $F$ holomorphic in $\Omega_{\rho(P,K)} \supset K=\square_m$, $P =A_{m,n,p}$, $\rho>1$ and so are all of its derivatives. Hence, $f$ and all of its derivatives are BLT-functions w.r.t.~the same domain $\Omega_{\rho(P,K)}$. Consequently, and
analogously to \ref{T1}, the bound follows from substituting the estimate for the corresponding $k$-th order Lebesgue constant from Lemma~\ref{lemma:LEB}\ref{L2}.
\end{proof}

We next verify these statements in numerical experiments, confirming in particular that in the Euclidean-degree case LCL-node interpolants perform better than LP ones, but the derivatives of LP-node interpolants reach similar or better approximation rates as those of LCL-node interpolants.

\section{Numerical experiments}\label{sec:NUM}

We experimentally verify our results using a MATLAB prototype named {\sc MIP}, implementing the multivariate divided difference scheme from Definition \ref{def:DDS}.
Then, we benchmark an optimised \emph{open-source Python implementation}, called {\sc minterpy}~\citep{minterpy}.

The experimental results reported in section~\ref{sec:MIP} indicate that MIP resist the curse of dimensionality best among the tested state-of-the-art alternatives. The results of section~\ref{sec:minterpy} validate Theorem~\ref{theo:APP}, showing that our approach can reach optimal approximation rates. Even if the achieved Euclidean-degree rates are mostly lower than the maximum-degree ones, differences are small.
Especially in higher dimensions, this renders Euclidean-degree interpolation the standard choice w.r.t.~\ref{Q2}. Interpolation on LCL and LP nodes performs mostly comparably in the Euclidean-degree case. However, only LCL grids achieve optimal rates, while LP nodes appear to be the better choice when evauating derivatives of the interpolants.

\subsection{{\sc MIP} benchmarks}\label{sec:MIP}
{\sc MIP} uses LCL nodes for Euclidean degree ($p=2$), and we compare it with the following alternative methods:
\begin{enumerate}[left=0pt,label=\textbf{B\arabic*)}]
\item {\sc chebfun} from the corresponding MATLAB package \citep{chebfun};
\item {\sc cubic splines} and {\sc $5^{th}$-order splines} from the MATLAB \emph{Curve Fitting Toolbox};
\item {\sc Floater-Hormann} interpolation \citep{floater} from {\sc chebpol} \citep{gaure};
\item {\sc multi-linear} (piecewise linear) interpolation  from  {\sc chebpol} \citep{gaure};
\item {\sc Chebyshev} interpolation of $1^\text{st}$ kind from  {\sc chebpol} \citep{gaure};
\end{enumerate}
Apart from {\sc MIP}, all other methods use tensorial grids as interpolation nodes. {\sc chebfun} and {\sc Chebyshev} only deliver $l_\infty$-degree interpolants.
The interpolation degree of Floater-Hormann and all spline interpolations is set as $n = \mathrm{argmax}_{n \in \N} \{ C \geq |A_{m,n,\infty}|\}$, being the largest maximum degree that results in a smaller set of coefficients than the total number of coefficients, $C \in \N$, the corresponding method requires.
All implementations are benchmarked using MATLAB version R2019b, {\sc Chebfun} package version 5.7.0, and R version 3.2.3/Linux.
The code and all benchmark data sets are available at
\emph{https://git.mpi-cbg.de/mosaic/polyapprox}.

\begin{experiment}\label{exp1}
We measure the approximation errors of the interpolants computed by the tested methods for the Runge function,  $f(x) = \frac{1}{s^2 + r^2\|x\|^2}$ (Eq.~\eqref{eq:Runge}), with $r^2 =1,10$, $s=1$, resulting in the optimal rates $\rho>1$ as reported in Table~\ref{Trate}.
To measure the approximation errors $\| f- Q_f\|_{C^0(\square_m)}$, we measure the $L_\infty$-error at $100$ random points $M\subseteq \square_m$, $|M|=100$. These points are sampled {\it i.i.d.} for each degree, but are identical across methods.
The approximation rates of {\sc MIP} are fitted with the model $y = c\rho_\mathrm{MIP}^{-n}$ with a $R$-squared of $0.99$ or better, as reported in
Table~\ref{Trate}.
\end{experiment}


%
%
%

Figure \ref{Rate_3D} shows the results in dimension $m=3$.
We observe that {\sc Floater-Hormann} is indistinguishable from {\sc $5^{th}$-order splines}.
When considering the number of coefficients/nodes required to determine the interpolant, plotted in the right panel, the polynomial convergence rates of {\sc Floater-Hormann} and all spline-type approaches become apparent. On the contrary, with a slight advantage over {\sc Chebyshev} and {\sc chebfun}, \textsc{MIP} nearly reaches the optimal exponential convergence rate from Eq.~\eqref{eq:rate}. Hereby, \textsc{MIP} requires $122^3/899028 \approx 2$ times fewer coefficients/nodes than {\sc Chebyshev} or {\sc chebfun} require to approximate $f$ to machine precision for degree $n=121$.

\begin{figure}[t!]
\center
 \includegraphics[scale=0.28]{./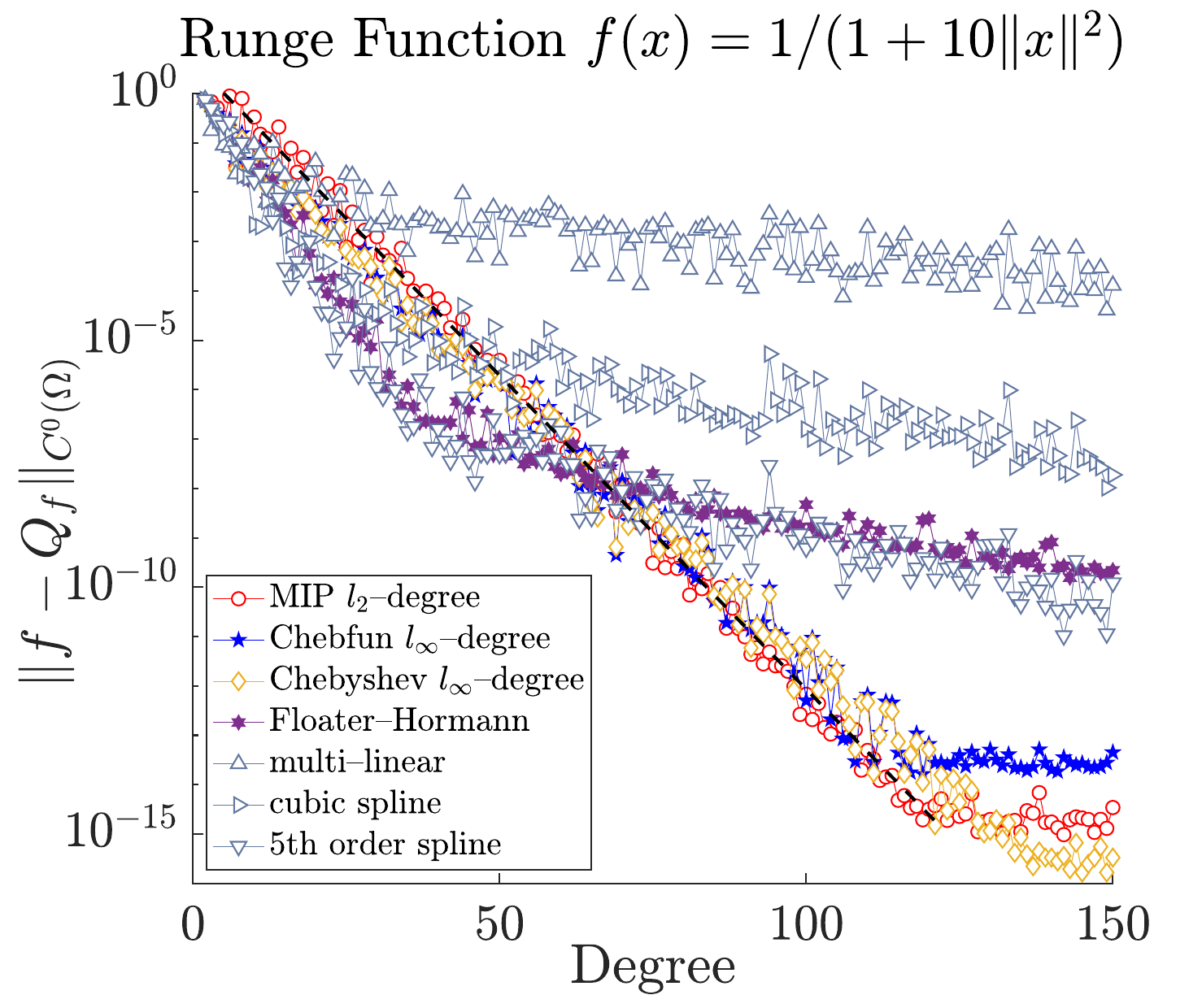}
  \includegraphics[scale=0.28]{./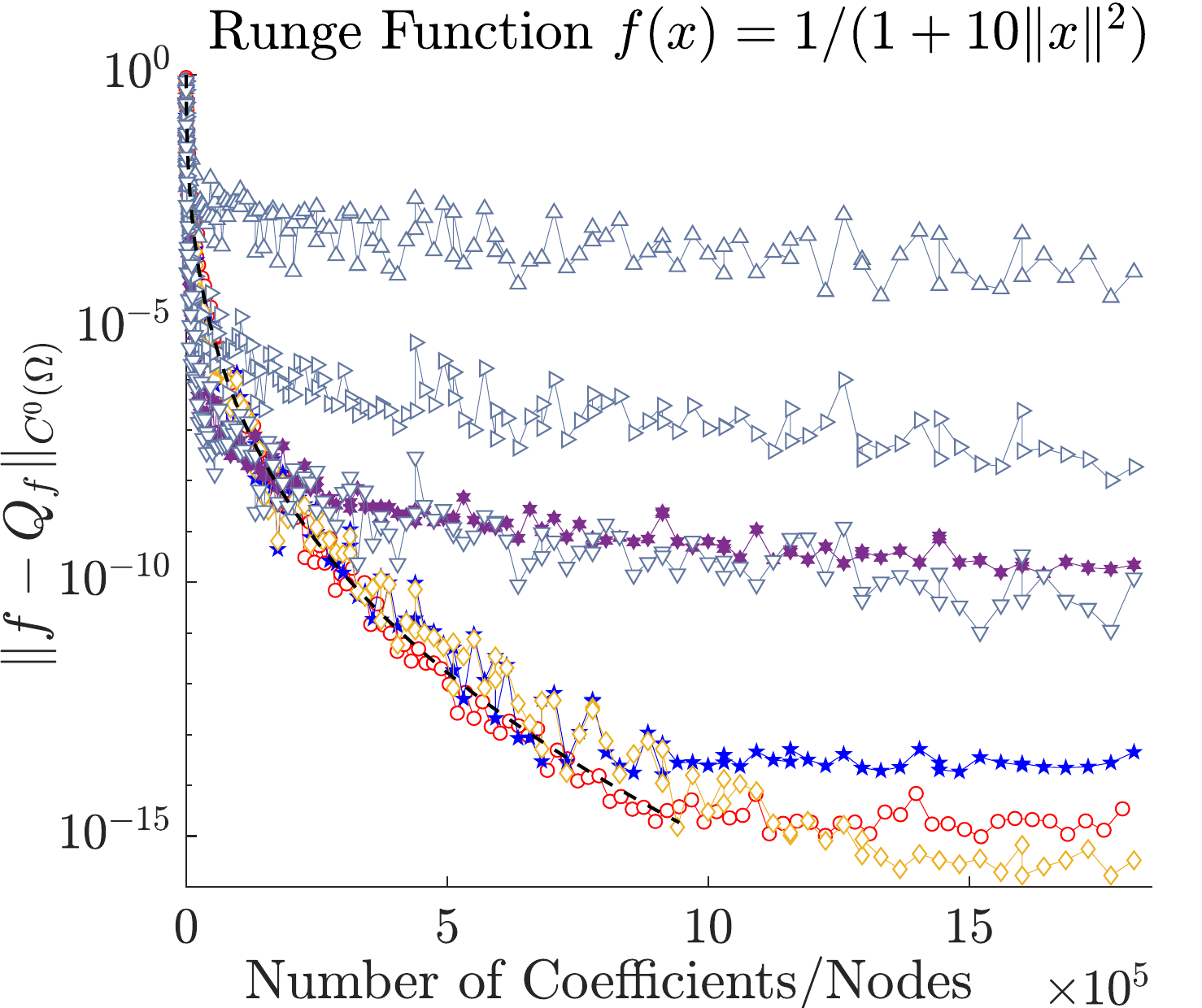}
\caption{Approximation errors for the benchmarked methods interpolating the Runge function in dimension $m=3$. The fitted asymptotic rate of {\sc MIP} from Table \ref{Trate} is indicated by the black dashed line. } \label{Rate_3D}
\end{figure}

\begin{figure}[t!]
\center
\includegraphics[scale=0.28]{./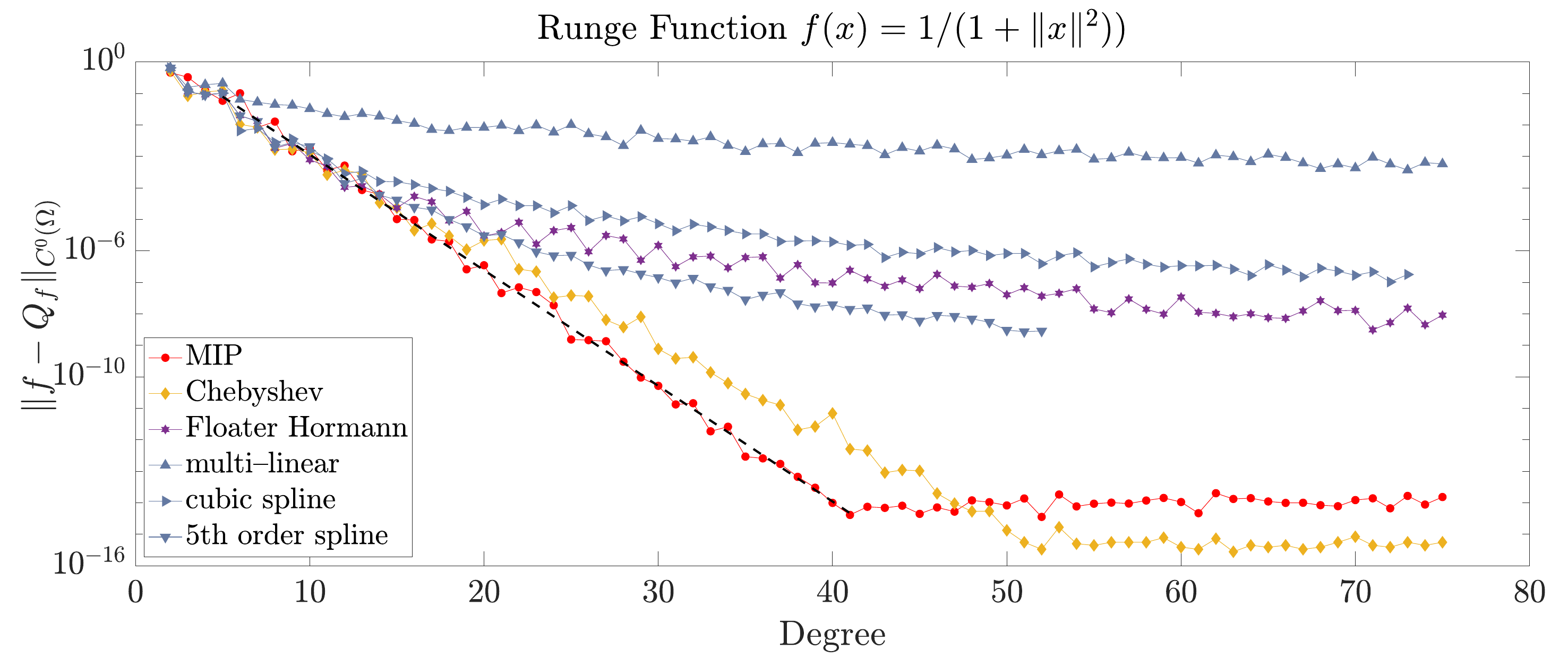}
\caption{Approximation errors for the benchmarked methods interpolating the Runge function in dimension $m=4$. The fitted asymptotic rate of {\sc MIP} from Table \ref{Trate} is indicated by the black dashed line.} \label{Rate_4D}
\end{figure}
\begin{table}[t!]
\center
\begin{tabular}{lcccc}
function &  dim   & $\rho_{\mathrm{MIP}}$ & $c$ & $\rho$   \\
 \hline
$f(x) = 1/(1+10\|x\|^2)$ &  3 &  $1.34$ & $4.41$  & $1.365$\\
$f(x) = 1/(1+1\|x\|^2)$  &  4 &  $2.33$ & $5.40$  & $2.41$ \\
$f(x) = 1/(1+1\|x\|^2)$  &  5 &  $2.35$ & $13.37$ & $2.41$ \\
\end{tabular}
\caption{Fitted approximation rates $\rho_{\mathrm{MIP}}$ for {\sc MIP} with respect to the model $y = c\rho_\mathrm{MIP}^{-n}$, compared with the
theoretical optimal rates $\rho>1$ from Eq.~\eqref{eq:Rbounds}.
\label{Trate}}
\end{table}
\begin{figure}[t!]
\center
  \includegraphics[scale=0.28]{./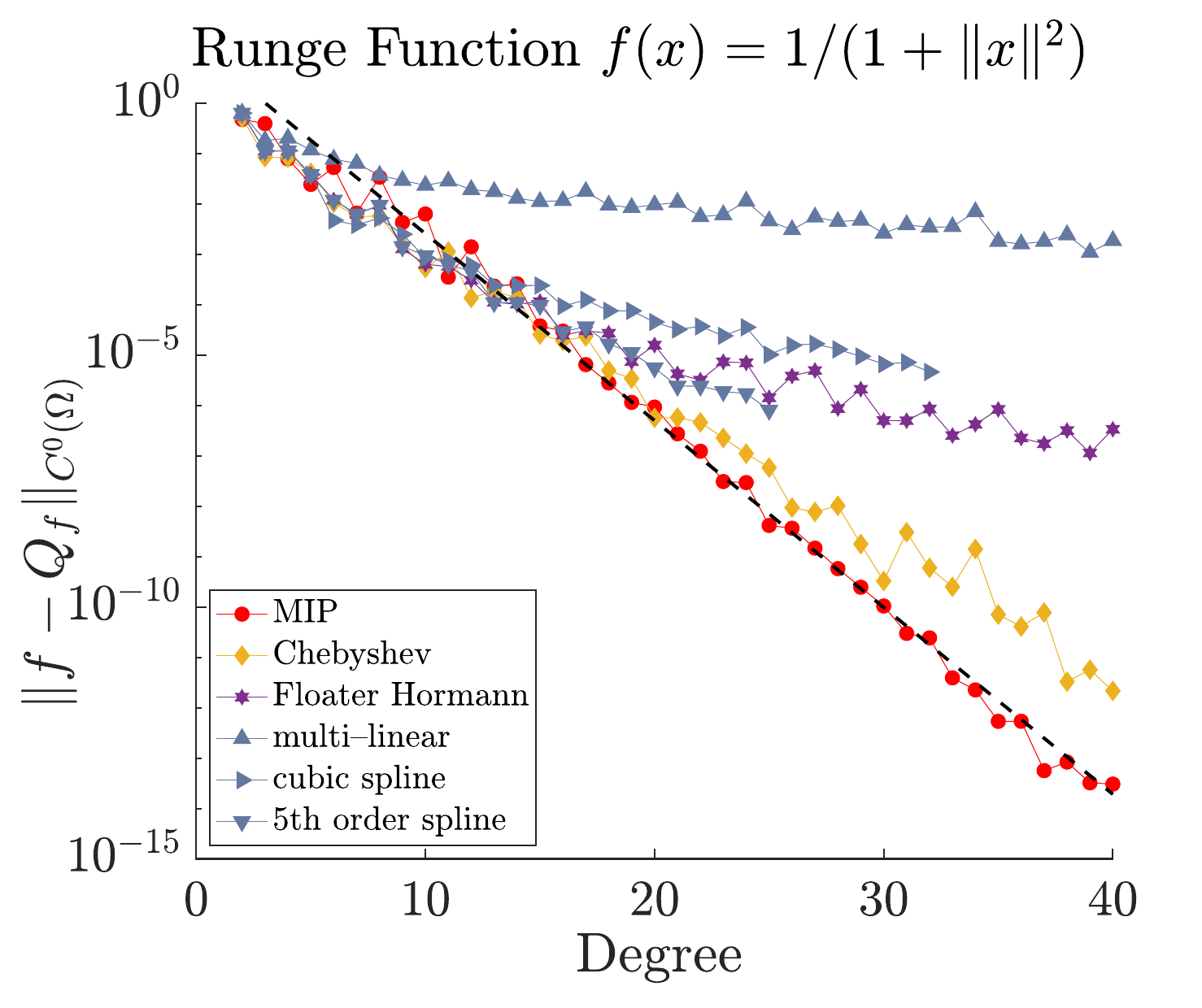}
  \includegraphics[scale=0.28]{./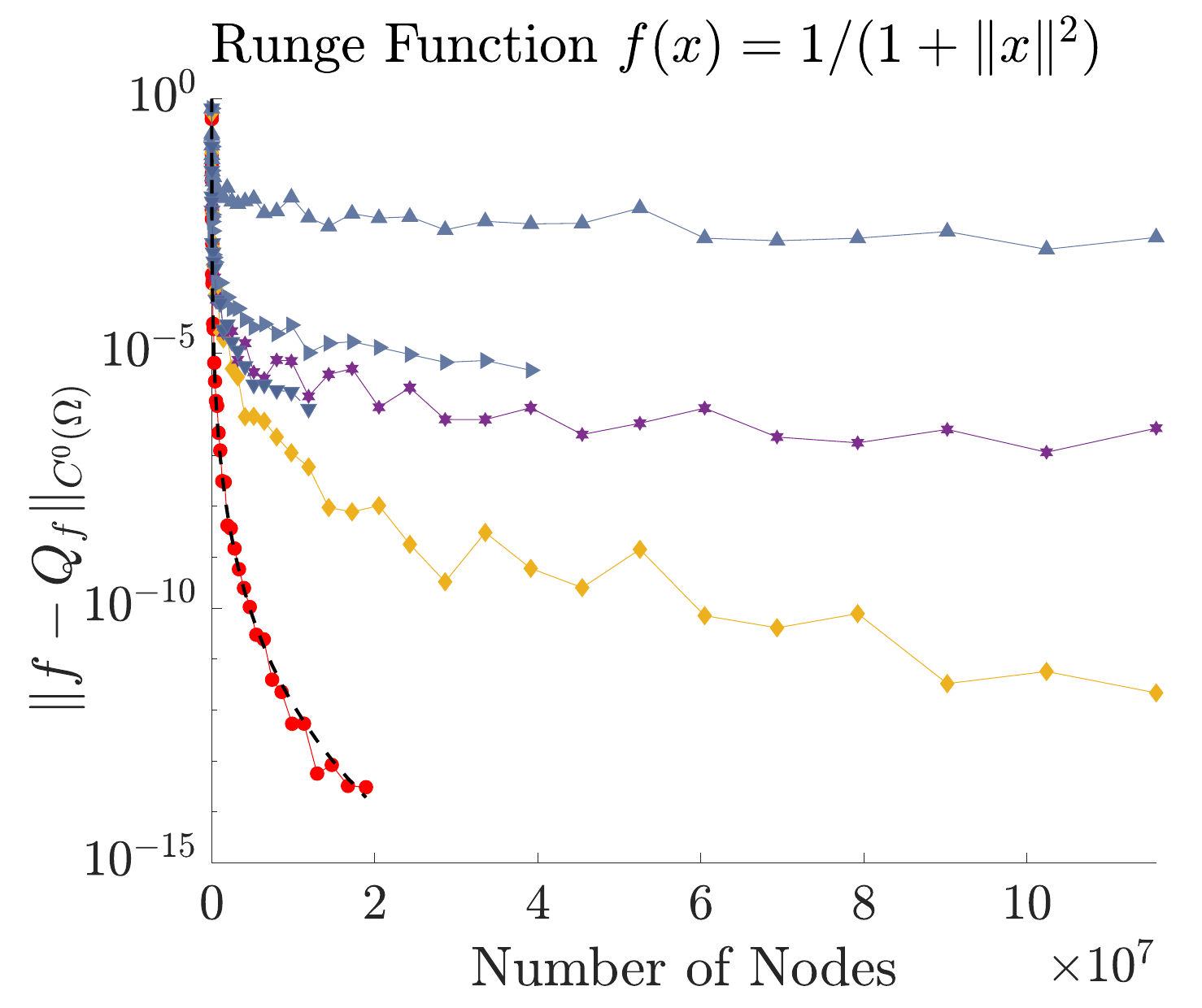}
\caption{Approximation errors for the benchmarked methods interpolating the Runge function in dimension $m=5$. The fitted asymptotic rate of {\sc MIP} from Table \ref{Trate} is indicated by the black dashed line.} \label{Rate_5D}
\end{figure}

Figure \ref{Rate_4D} shows the results in dimension $m=4$. Here, spline interpolation is unable to scale to high degrees, due to computer memory requirements.
Therefore, we interpolate the simpler Runge function $f(x) =\frac{1}{1+\|x\|^2}$, i.e. $r^2=1$. 4D interpolation is not supported in {\sc chebfun} and thus not given here.
Only {\sc Chebyshev} and {\sc MIP} converge to machine precision, with {\sc MIP} converging at nearly the optimal rate. Consequently,
{\sc MIP} reaches machine precision earlier than {\sc Chebyshev}, namely for degree $n=40$ ({\sc Chebyshev}: $n=47$).

In dimension $m=5$, the advantage of {\sc MIP} over {\sc Chebyshev} further increases, as shown in Fig.~\ref{Rate_5D}. {\sc MIP} best resists the curse of dimensionality, yielding two orders of magnitude better accuracy than {\sc Chebyshev} for $n=40$ (error $3.0\cdot 10^{-14}$ vs.~$3.2\cdot 10^{-12}$). Again, {\sc MIP} does so requiring fewer interpolation nodes $\frac{|C_{\mathrm{Chebyshev}}|}{|C_{\mathrm{MIP}}|} = \frac{115856201}{18920038} \approx 6$.
An error of $4.0\cdot 10^{-12}$ is even reached by {\sc MIP} with 10 times fewer ($7.4\cdot10^6$ vs.~$7.9\cdot 10^7$) interpolation nodes than {\sc Chebyshev}.

\subsection{{\sc minterpy} benchmarks}\label{sec:minterpy}

We continue the numerical experiments by presenting benchmarks for the Python {\sc minterpy}~\citep{minterpy} implementation of the multivariate divided difference scheme from Definition \ref{def:DDS}, as well as efficient evaluation and differentiation according to Theorem~\ref{theo:DIFF}. We numerically compute the 1D Leja points using the Python function {\sc scipy.optimize}.

\begin{experiment}\label{exp2}
We revisit Experiment~\ref{exp1}, now measuring the $L_\infty$ errors of the LCL-node and LP-node $l_p$-degree interpolants for $p=1,2,\infty$, and their derivatives, at $10^6$  random points, {\it i.i.d.} for each degree, but identical across  methods.
The approximation rates are fitted with the model $y = c\rho^{-n}$ with a $R$-squared of $0.99$ or better. The solid lines in the plots show the resulting fits.
We consider several functions with optimal rates $\rho$ mostly known, thanks to \cite{bos2018bernstein}:
\begin{enumerate}[left=0pt,label=\textbf{F\arabic*)}]
\item\label{F1} The bivariate function
\begin{equation}
    f(x_1,x_2) = \frac{1}{(x_1-r)^2 + x_2^2}\,, \,\, 1 < r \in \R\,, \quad\text{with}\quad  \rho_p =  \begin{cases}
    r & ,\, p = 1 \\
    r-1 +\sqrt{(r-1)^2 +1} & ,\, p = \infty
  \end{cases}\,,
\end{equation}
 Though not proven in \citep{bos2018bernstein}, the Euclidean and maximum degree rates are expected to coincide.
 \item\label{F2} The multivariate Runge function
 $$f(x) = \frac{1}{1 + r^2\|x\|^2}\,, \quad 1< r \in \R\,,$$
 from Eq.~\eqref{eq:Runge} with optimal rates according to Eq.~\eqref{eq:Rbounds}.
 \item\label{F3} The multivariate perturbed Runge function
\begin{equation*}
    f(x) = \frac{1}{1 + (\sum_{i=1}^m r_i x_i)^2}\,,\quad r_i =5/i^3\,.
\end{equation*}

\item\label{F4} The multivariate extension of the function from Eq.~\eqref{eq:BLT},
 $$f(x) = \frac{1}{\sum_{i=1}^m (x_i - a)^2}\,, \quad 1< a \in \R\,.$$
\item\label{F5} The  entire multivariate trigonometric function
\begin{equation*}
    f(x) = \cos(\pi \mathbf{k_1}\cdot x) + \sin(\pi \mathbf{k_2}\cdot x)\,, \quad \mathbf{k_1},\mathbf{k_2} = k_1\mathbf{1},  k_2\mathbf{1} \in \N^m\,, \mathbf{1} =(1,\dots,1) \in \N^m\,, k_1,k_2 \in \N\,,
\end{equation*}
with unbounded asymptotic rate $\rho >1$.
\end{enumerate}
\end{experiment}

\begin{figure}[!htp]
    \centering
    \captionsetup{width=1\linewidth}
    \vspace{-1em}
    \begin{subfigure}[t]{0.48\linewidth}
        \centering
        \includegraphics[scale=0.3]{./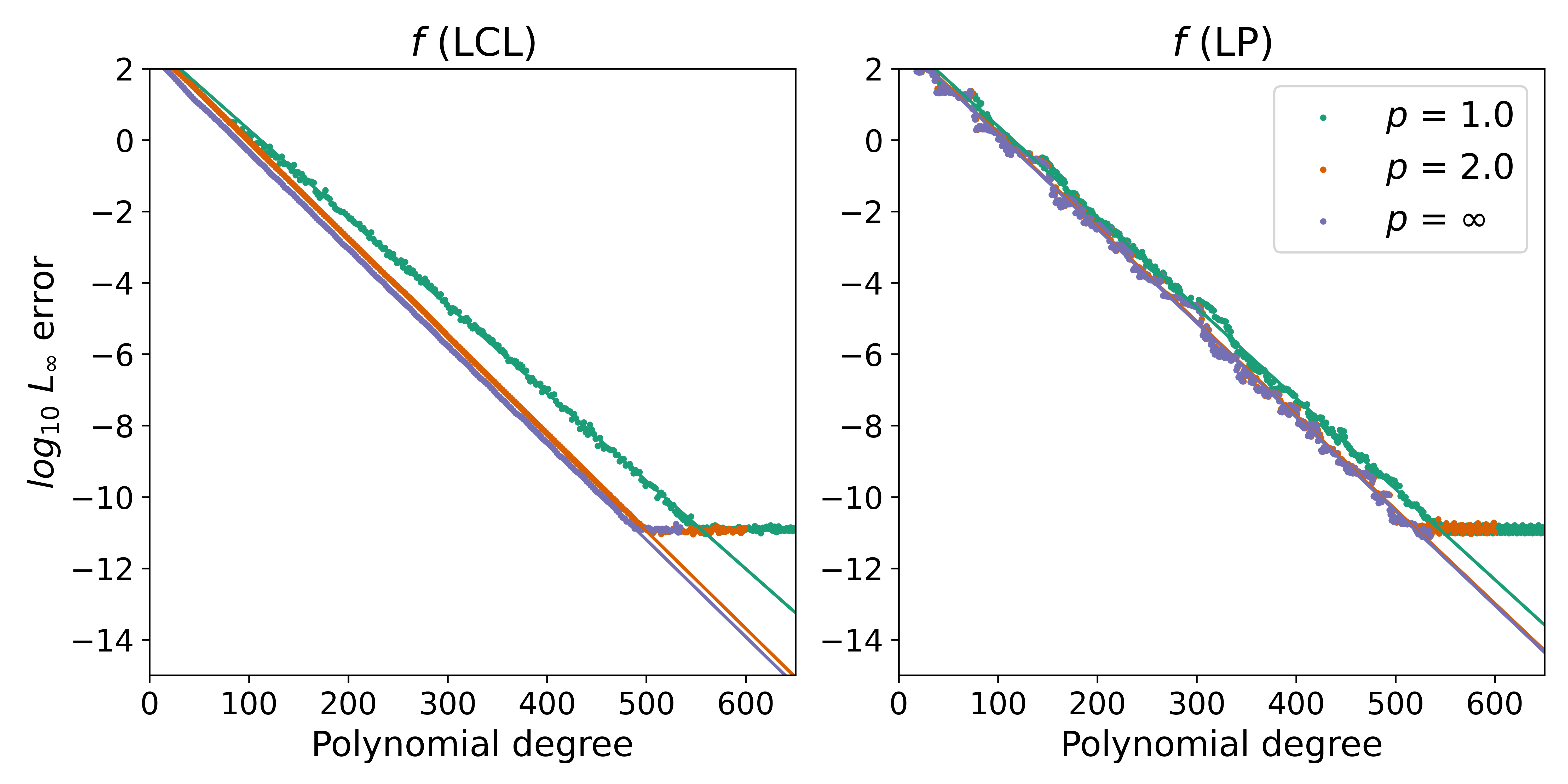}
        \caption{\ref{F1} for $r = 17/16$}
    \end{subfigure}
    \hfill
    \begin{subfigure}[t]{0.48\linewidth}
        \centering
        \includegraphics[scale=0.315]{./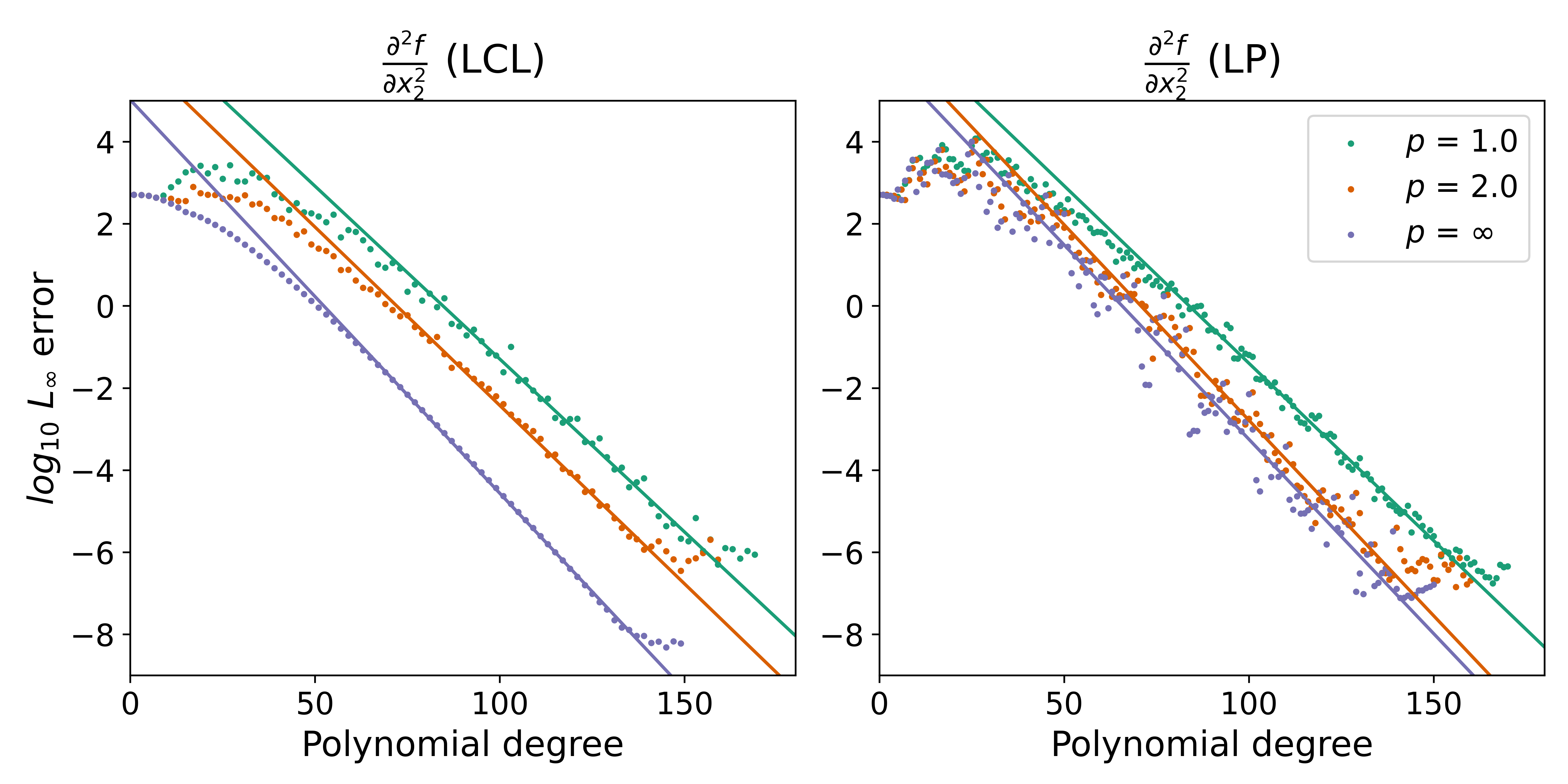}
        \caption{$2$nd derivative of \ref{F1} for $r = 5/4$}
    \end{subfigure}
    \caption{Approximation errors of the LCL- and LP-node interpolants of \ref{F1} and their $2$nd derivative in dimension $m = 2$ for $p=1,2,\infty$. Solid lines show the fitted rates from Tables~\ref{tab:diff-F1} and \ref{tab:F1}.\label{SC-F1}}
    \label{fig:F1}
\end{figure}

\begin{table}[!htp]
    \centering
    \captionsetup{width=1.0\linewidth}
    \begin{subtable}[t]{0.45\textwidth}
        \centering
        \begin{tabular}{c|c|cc}
            \toprule
            \multirow{2}{*}{$r$} &
            \multirow{2}{*}{$p$} &
            \multicolumn{2}{c}{$f$} \\
            \cline{3-4}
            &
            & LCL & LP \\
            \midrule
            & $1.0$    & $1.119$ & $1.122$ \\
            $\frac{9}{8}$ & $2.0$    & $\mathbf{1.133}$ & $1.128$ \\
            & $\infty$ & $\mathbf{1.133}$ & $1.128$ \\
            \bottomrule
        \end{tabular}
    \end{subtable}
    \hspace{1cm} 
    \begin{subtable}[t]{0.45\textwidth}
        \centering
        \begin{tabular}{c|c|cc}
            \toprule
            \multirow{2}{*}{$r$} &
            \multirow{2}{*}{$p$} &
            \multicolumn{2}{c}{$f$} \\
            \cline{3-4}
            &
            & LCL & LP \\
            \midrule
            & $1.0$    & $1.058$ & $1.060$ \\
            $\frac{17}{16}$ & $2.0$    & $\mathbf{1.065}$ & $1.063$ \\
            & $\infty$ & $\mathbf{1.065}$ & $1.063$ \\
            \bottomrule
        \end{tabular}
    \end{subtable}
    \caption{Fitted approximation rates of LCL- and LP-node interpolants of \ref{F1} for different $r$ and $p$. Bold indicates cases in which the optimal rate was actually achieved.}
    \label{tab:F1}
\end{table}

\begin{table}[!t]
    \centering
    \captionsetup{width=1.0\linewidth}
    \begin{subtable}[t]{1.0\textwidth}
     \centering
 \begin{tabular}{c|c|cc|cc|cc}
    \toprule

    \multirow{2}{*}{$r$} &
    \multirow{2}{*}{$p$} &
    \multicolumn{2}{c}{$f$}&
    \multicolumn{2}{c}{$\frac{\partial f}{\partial x_1}$} &
    \multicolumn{2}{c}{$\frac{\partial^2 f}{\partial x^2_1}$}\\
    \cline{3-8}
    &
    &
    LCL &
    LP &
    LCL &
    LP &
    LCL &
    LP\\
    \midrule

               & $1.0$    & $1.243$ & $1.243$  & $1.228$ & $1.224$ & $1.212$ & $1.216$ \\
$\frac{5}{4}$  & $2.0$    & $1.280$ & $1.277$ & $1.260$ & $1.256$ & $1.229$ & $1.241$\\
               & $\infty$ & $\mathbf{1.281}$ & $1.274$  & $1.263$ & $1.254$ & $1.252$ & $1.246$\\
\toprule

    \multirow{2}{*}{} &
    \multirow{2}{*}{} &
    \multicolumn{2}{c}{$\frac{\partial f}{\partial x_2}$} &
    \multicolumn{2}{c}{$\frac{\partial^2 f}{\partial x^2_2}$}&
        \multicolumn{2}{c}{}  \\
    \cline{3-8}
    &
    &
    LCL &
    LP &
    LCL &
    LP &
     &
    \\
    \midrule
& $1.0$ &
$1.235$ &
$1.245$ &
$1.214$ &
$1.219$  & & \\

$\frac{5}{4}$ & $2.0$ &
$1.251$ &
$1.262$ &
$1.222$ &
$1.245$ & & \\

& $\infty$ &
$1.266$ &
$1.267$ &
$1.247$ &
$1.244$ & & \\
\bottomrule
 \end{tabular}
    \end{subtable}
 \caption{Fitted approximation rates of LCL- and LP-node interpolants of the $1$st and $2$nd derivatives of \ref{F1} for different $r$ and $p$ in dimension $m = 3$.
 Bold indicates cases in which the optimal rate was actually achieved.}
 \label{tab:diff-F1}
\end{table}

\subsubsection{Discussion of results for \ref{F1}}
The approximation errors of the interpolants and their derivatives are plotted in Fig.~\ref{SC-F1}. The fitted approximation rates are reported in Tables~\ref{tab:diff-F1} and \ref{tab:F1}.

As theoretically predicted, the approximation rates of Euclidean- and maximum-degree interpolants coincide, with LCL-node interpolants performing superior to LP-node interpolants, reaching the optimal rate.
This maintains true also in the total-degree case, where LP-node interpolants are slightly closer to the optimal rate $\rho =r$,  $r \in\{ 5/4 =1.250, 9/8 = 1.125, 17/16 = 1.0625\}$ than LCL-node interpolants.

The derivatives reach, as expected, slightly lower rates (Table \ref{tab:diff-F1}).  LCL- and LP-node interpolants perform comparably for the $1$st derivatives. However,  for the $2$nd derivatives in the Euclidean- and total-degree cases, LP-node interpolants are superior to LCL-node ones.

\begin{figure}[t!]
\vspace{-0.5em}
    \centering
    \captionsetup{width=1.0\linewidth}

    \begin{subfigure}[t]{0.48\linewidth} 
        \centering
        \includegraphics[scale=0.3]{./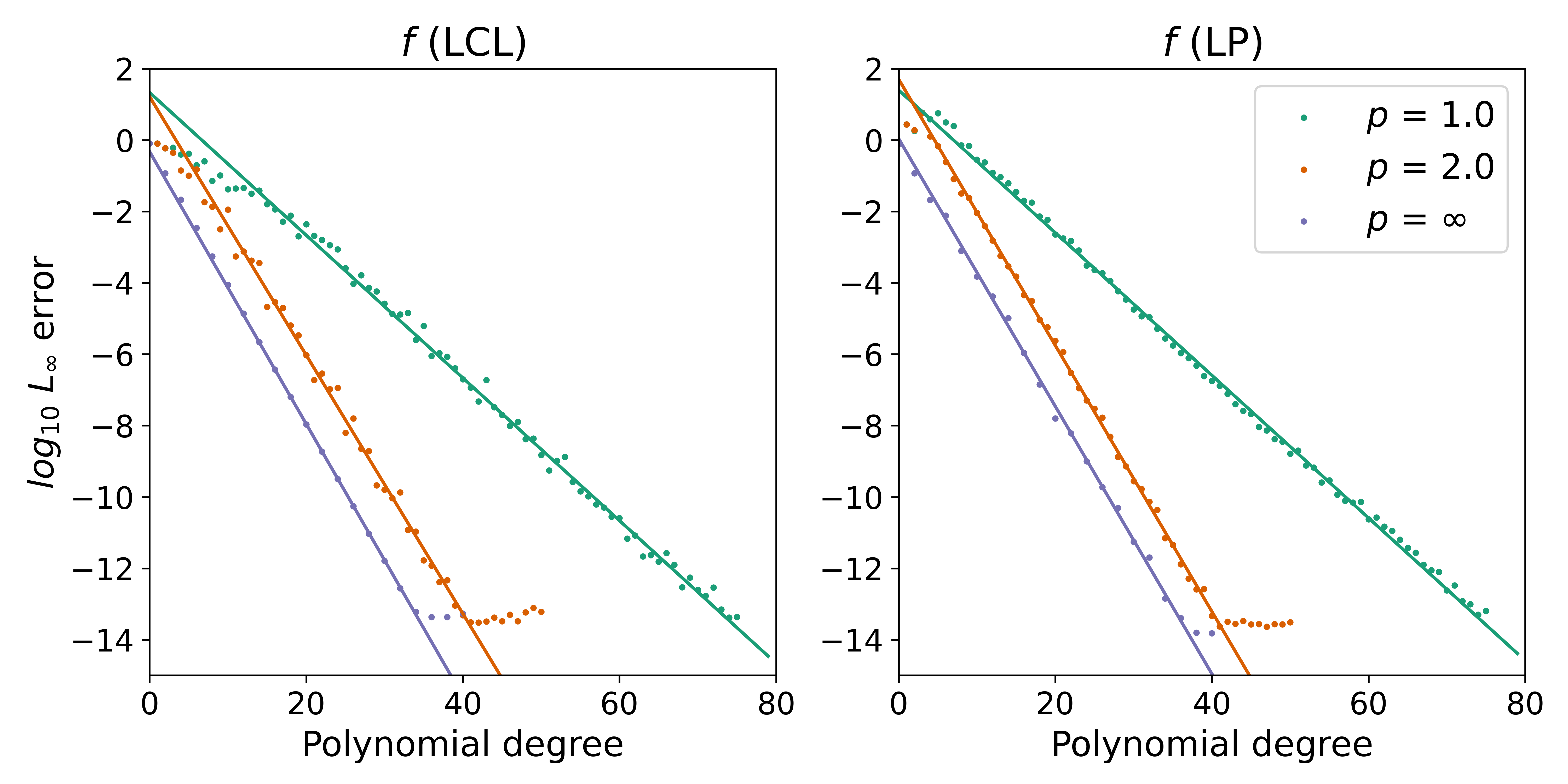}
        \caption{\ref{F2} in dimension $m = 4$ with $r = 1$}
        \label{subfig:cheb-leja}
    \end{subfigure}
    \hfill
    \begin{subfigure}[t]{0.48\linewidth} 
        \centering
        \includegraphics[scale=0.31]{./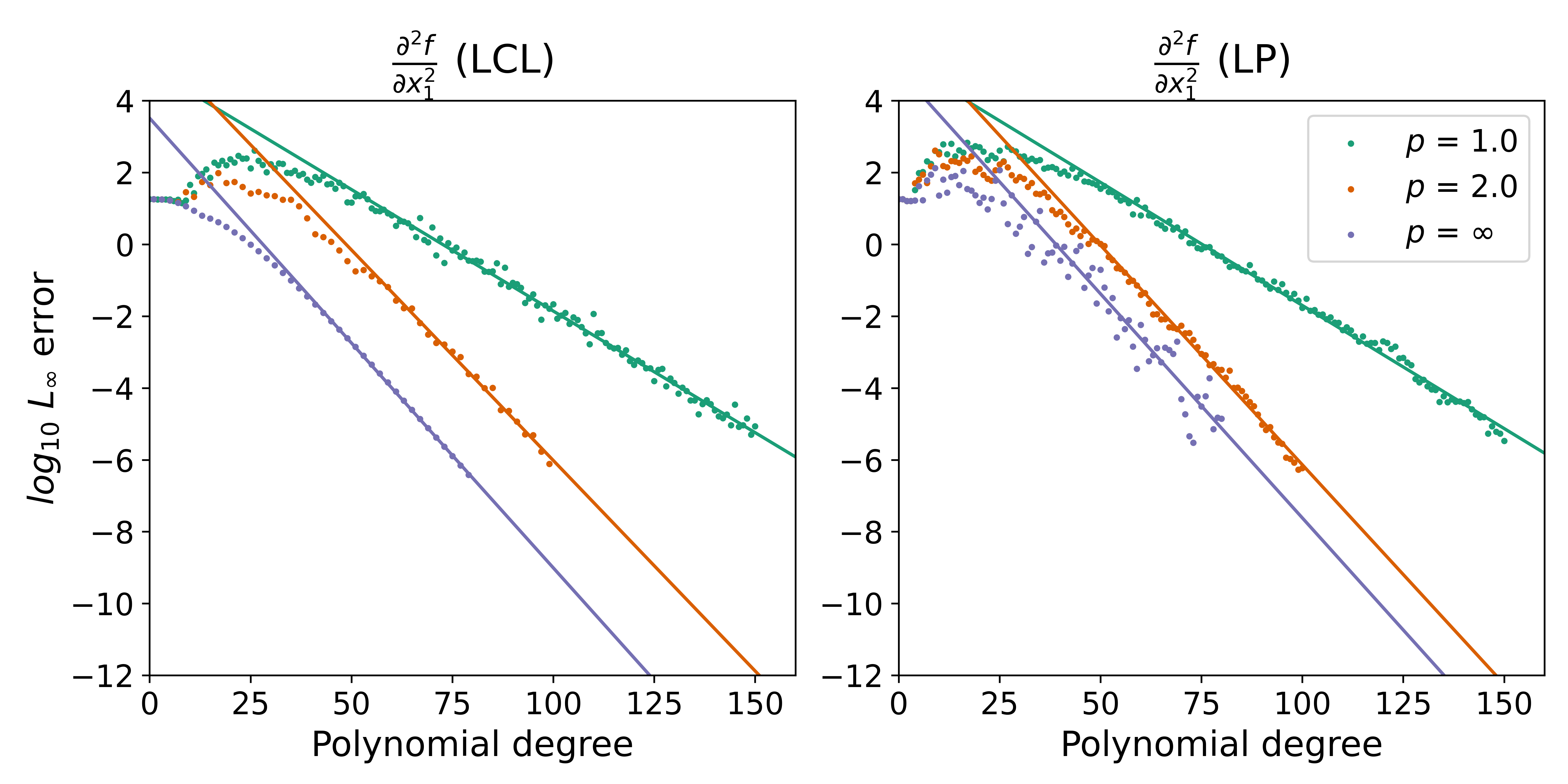}
        \caption{$2$nd derivative of \ref{F2} in dimension $m = 3$ with $r = 3$}
        \label{subfig:diff}
    \end{subfigure}
        \caption{Approximation errors of the LCL- and LP-node interpolants of \ref{F2} and their $2$nd derivative for $p=1,2,\infty$. Solid lines show the fitted rates from Tables~\ref{tab:2} and \ref{tab:diff-runge}.}
    \label{Rungerates}
\end{figure}

\begin{table}[!htp]
\vspace{-1.0em}
    \centering
    \captionsetup{width=1.0\linewidth}
    \begin{subtable}[t]{0.45\textwidth}
        \centering
        \begin{tabular}{c|c|cc}
            \toprule
            \multirow{2}{*}{$r$} & \multirow{2}{*}{$\rho$} &
            \multicolumn{2}{c}{$f$} \\
            \cline{3-4}
            &
          &
            LCL &
            LP \\
            \midrule
            $ 1$ & $2.414$ & $2.406$ & $2.336$ \\
            $3$ & $1.387$ & $\mathbf{1.387}$ & $1.370$ \\
            $5$ & $1.219$ & $\mathbf{1.219}$ & $1.212$ \\
            \bottomrule
        \end{tabular}
        \caption{$\rho = \rho_2, \rho_\infty$ in dimension $m=1$}
        \label{tab:Rungerates}
    \end{subtable}%
    \hfill
    \begin{subtable}[t]{0.45\textwidth}
        \centering
        \begin{tabular}{c|ccc}
            \toprule
            \multirow{2}{*}{$r$} &
            \multicolumn{3}{c}{$\rho$} \\
            \cline{2-4}
            &
            $m=2$ &
            $m=3$ &
            $m=4$ \\
            \midrule
            $ 1$ & $1.931$ & $1.732$ & $1.618$ \\
            $3$ & $1.263$ & $1.210$ & $1.180$ \\
            $5$ & $1.151$ & $1.122$ & $1.104$ \\
            \bottomrule
        \end{tabular}
        \caption{$\rho=\rho_1$ in dimensions $m=2,3,4$}
        \label{Rungerates1}
    \end{subtable}
    \caption{Dimension-independent optimal approximation rates for the Runge function \ref{F2} for different $r$, $p$, and $m$. The numerically achieved rates for dimension $m=1$ are reported in the left table, with bold indicating optimal achieved rates.}
\end{table}

\subsubsection{Discussion of results for \ref{F2}}
The dimension-independent optimal approximation rates for Euclidean and maximum degrees ($p =2,\infty$, Eq.~\eqref{eq:Rbounds}) are given in Table~\ref{tab:Rungerates}, along with the achieved rates for the LCL-node and LP-node interpolants in dimension $m=1$. The dimension-dependent optimal rates for the total-degree case ($p=1$) are reported in Table~\ref{Rungerates1}. The approximation errors for the interpolants of the Runge function \ref{F2} and their derivatives are plotted in Fig.~\ref{Rungerates}. The numerically reached approximation rates of all conducted cases are reported in Tables~\ref{tab:2} and \ref{tab:diff-runge}.

Both LCL- and LP-node interpolants approximate \ref{F2} with rates close to the optimum in the total- and Euclidean-degree cases. However, LCL-node interpolants perform than those on LP ones in the maximum-degree case ($p=\infty$), reaching optimal rates. While all interpolants reach rates close to optimal for the $1$st and $2$nd derivatives, LP-node interpolants seem to have a slight advantage here (Table~\ref{tab:diff-runge}).

\begin{table}[!htp]
    \centering
    \begin{tabular}{c|c|cc|cc|cc}
    \toprule
    \multirow{2}{*}{$r$} &
    \multirow{2}{*}{$p$} &
    \multicolumn{2}{c|}{$m = 2$} &
    \multicolumn{2}{c|}{$m = 3$} &
    \multicolumn{2}{c}{$m = 4$} \\
    \cline{3-8}
    &
    &
    LCL &
    LP &
    LCL &
    LP &
    LCL &
    LP \\
    \midrule

                 & $1.0$     & $1.911$ & $1.896$ & $1.700$ & $1.703$ & $1.585$ & $1.584$ \\
$ 1$ & $2.0$  &   $2.332$ & $2.351$ & $2.313$ & $2.353$ & $2.303$ & $2.360$ \\
                 & $\infty$ & $2.408$ & $2.349$ & $2.412$ & $2.359$ & $2.408$ & $2.371$ \\
\midrule
                 & $1.0$    & $1.252$ & $1.255$ & $1.201$ & $1.204$ & $1.175$ & $1.169$ \\
$3$  & $2.0$   & $1.360$ & $1.373$ & $1.370$ & $1.375$ & $1.345$ & $1.369$ \\
                 & $\infty$ & $\mathbf{1.387}$ & $1.372$ & $\mathbf{1.387}$ & $1.367$ & $1.396$ & $1.381$ \\
\midrule
                 & $1.0$    & $1.145$ & $1.147$ & $1.116$ & $1.115$ & $1.095$ & $1.101$ \\
$5$  & $2.0$     & $1.206$ & $1.212$ & $1.208$ & $1.209$ & $1.192$ & $1.211$ \\
                 & $\infty$  & $\mathbf{1.219}$ & $1.209$ & $\mathbf{1.219}$ & $1.209$ & $1.241$ & $1.208$ \\
 \bottomrule
 \end{tabular}
 \caption{Fitted approximation rates of LCL-node and LP-node interpolants of the Runge function \ref{F2} for different $r$, $p$, and $m$. Cases in which the optimal rates were achieved are marked bold.}
 \label{tab:2}
\end{table}

\begin{table}[!t]
    \centering
    \begin{tabular}{c|cc|cc}
    \toprule

    \multirow{2}{*}{$p$} &
    \multicolumn{2}{c}{$\frac{\partial f}{\partial x_1}$} &
    \multicolumn{2}{c}{$\frac{\partial^2 f}{\partial x_1^2}$} \\
    \cline{2-5}
    &
    LCL &
    LP &
    LCL &
    LP \\
    \midrule

$1.0$ &
$1.191$ &
$1.193$ &
$1.169$ &
$1.171$ \\

$2.0$ &
$1.338$ &
$1.364$ &
$1.310$ &
$1.325$ \\

$\infty$ &
$1.365$ &
$1.349$ &
$1.334$ &
$1.333$ \\
 \bottomrule
 \end{tabular}
 \caption{Fitted approximation rates of the $1$st and $2$nd derivatives of LCL-node and LP-node interpolants of the Runge function \ref{F2} for different $p$ in dimension $m = 3$, with $r = 3$.}
 \label{tab:diff-runge}
\end{table}

\begin{table}[!t]
    \centering
    \begin{tabular}{c|cc|cc|cc}
    \toprule

    \multirow{2}{*}{$p$} &
    \multicolumn{2}{c|}{$m = 2$} &
    \multicolumn{2}{c|}{$m = 3$} &
    \multicolumn{2}{c}{$m = 4$} \\
    \cline{2-7}

    &
    LCL &
    LP &
    LCL &
    LP &
    LCL &
    LP \\
    \midrule

  $1.0$     & $1.184$ & $1.183$ & $1.175$ & $1.178$ & $1.168$ & $1.159$ \\
  $2.0$     & $1.205$ & $1.207$ & $1.200$ & $1.202$ & $1.186$ & $1.209$ \\
  $\infty$   & $\mathbf{1.220}$ & $1.209$ & $\mathbf{1.220}$ & $1.209$ & $\mathbf{1.220}$ & $1.216$ \\

\bottomrule
 \end{tabular}
 \caption{Fitted approximation rates of LCL-node and LP-node interpolants of the function \ref{F3} for different $p$ and $m$. Cases in which the optimal rates were achieved are marked bold.\label{tab:F3}}
\end{table}

\subsubsection{Discussion of results for \ref{F3}} Adapting the computation of the optimal rate from \citet{bos2018bernstein}, the dimension-independent rate of \ref{F2} for $r=5$ also applies to \ref{F3} in the Euclidean- and maximum-degree cases. As reported in Table~\ref{tab:F3}, both LCL-node and LP-node interpolants achieve rates close to optimal in the Eucidean-degree case. The LCL-node interpolants reach the optimal rate in the maximum-degree case, while using LP nodes results in rates close to the optimum, reflecting the larger Lebesgue constant in this case (Lemma~\ref{lemma:LEB}). As expected, the rates in the total-degree case are lower and, as for \ref{F2}, decrease with increasing dimension $m$.

\subsubsection{Discussion of results for \ref{F4}} The numerically achieved rates for interpolating the function \ref{F4} are reported in Table~\ref{tab:F4}.
 In dimension $m=2$ and for $a = 5/4$, \cite{bos2018bernstein} computed the optimal rates $\rho_2 = 2.0518 <  2.1531 = \rho_\infty$ for the Euclidean- and maximum-degree cases. Our achieved rates for those cases are close to these predictions. As before, we observe significantly lower achieved rates in the total-degree case. In all cases, the difference between the Euclidean- and maximum-degree performance decreases with decreasing $a$. The rates only differ marginally between LCL- and LP-node interpolants.

\begin{table}[!htp]
    \centering
    \begin{tabular}{c|c|cc|cc|cc}
    \toprule

    \multirow{2}{*}{a} &
    \multirow{2}{*}{$p$} &
    \multicolumn{2}{c|}{$m = 2$} &
    \multicolumn{2}{c|}{$m = 3$} &
    \multicolumn{2}{c}{$m = 4$} \\
    \cline{3-8}

    &
    &
    LCL &
    LP &
    LCL &
    LP &
    LCL &
    LP \\
    \midrule

               & $1.0$     & $1.634$ & $1.634$ & $1.651$ & $1.654$ & $1.654$ & $1.667$ \\
$\frac{5}{4}$  & $2.0$     & $2.032$ & $1.993$ & $2.183$ & $2.190$ & $2.339$ & $2.324$ \\
               & $\infty$  & $2.148$ & $2.110$ & $2.295$ & $2.268$ & $2.423$ & $2.365$ \\
\midrule
                    & $1.0$      & $1.412$ & $1.411$ & $1.422$ & $1.421$ & $1.422$ & $1.421$ \\
$\frac{9}{8}$   & $2.0$      & $1.639$ & $1.626$ & $1.746$ & $1.734$ & $1.806$ & $1.806$ \\
                    & $\infty$   & $1.726$ & $1.695$ & $1.804$ & $1.773$ & $1.888$ & $1.869$ \\
\midrule
                    & $1.0$      & $1.275$ & $1.275$ & $1.286$ & $1.282$ & $1.283$ & $1.277$ \\
$\frac{17}{16}$ & $2.0$     & $1.421$ & $1.409$ & $1.481$ & $1.474$ & $1.510$ & $1.511$ \\
                    & $\infty$   & $1.473$ & $1.455$ & $1.522$ & $1.511$ & $1.595$ & $1.584$ \\
    \bottomrule
    \end{tabular}
    \caption{Fitted approximation rates of LCL-node and LP-node interpolants of the function \ref{F4} for different $a$, $p$, and $m$.\label{tab:F4}}
\end{table}

\subsubsection{Discussion of results for \ref{F5}} The achieved approximation rates for the interpolants of the function \ref{F5} and their $1$st and $2$nd derivatives are reported in Tables~\ref{tab:F5} and \ref{tab:diff-f5}. They clearly show that Euclidean-degree interpolation performs better than maximum-degree interpolation in this case. One might detect the increasing rate in the plots in Figs.~\ref{F5:a} and \ref{F5:b} for the Euclidean-degree case, reflecting the asymptotically unbounded $\rho$ for this function. Differences in the rates between LCL- or LP-node interpolants are mostly negligible, while LP nodes again appear to offer a slight advantage when computing derivatives of the interpolatns in the Euclidean-degree case (Table~\ref{tab:diff-f5}).

\begin{figure}[!htp]
\centering
    \captionsetup{width=1.0\linewidth}
    \vspace{-1.0em}

    \begin{subfigure}[b]{0.48\linewidth}
        \centering
        \includegraphics[scale=0.3]{./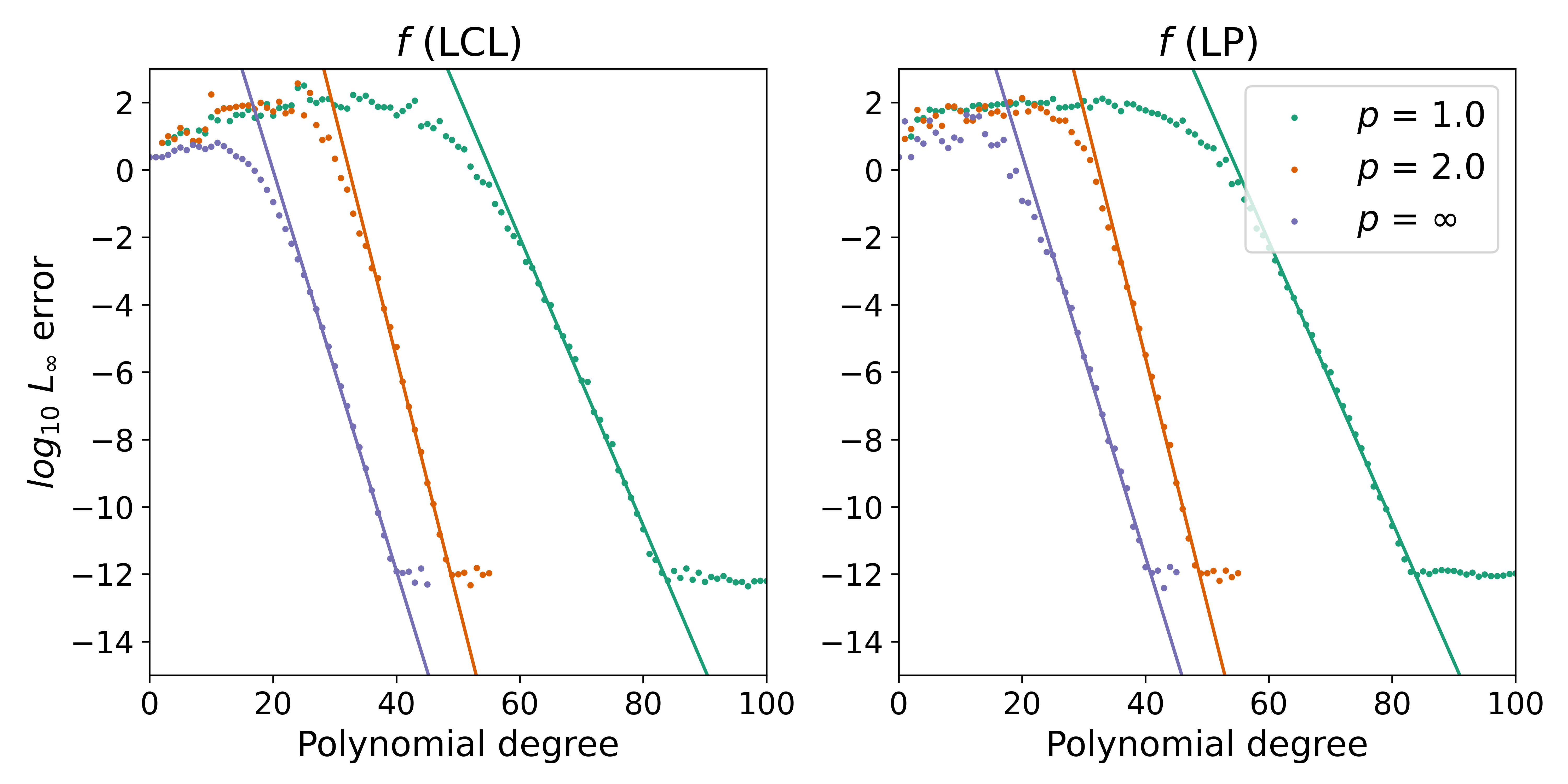}
        \caption{\ref{F5} in dimension $m =3$ with $k_1 = k_2 = 5$}    \label{F5:a}
    \end{subfigure}
    \hfill
        \begin{subfigure}[b]{0.48\linewidth}
        \centering
        \includegraphics[scale=0.3]{./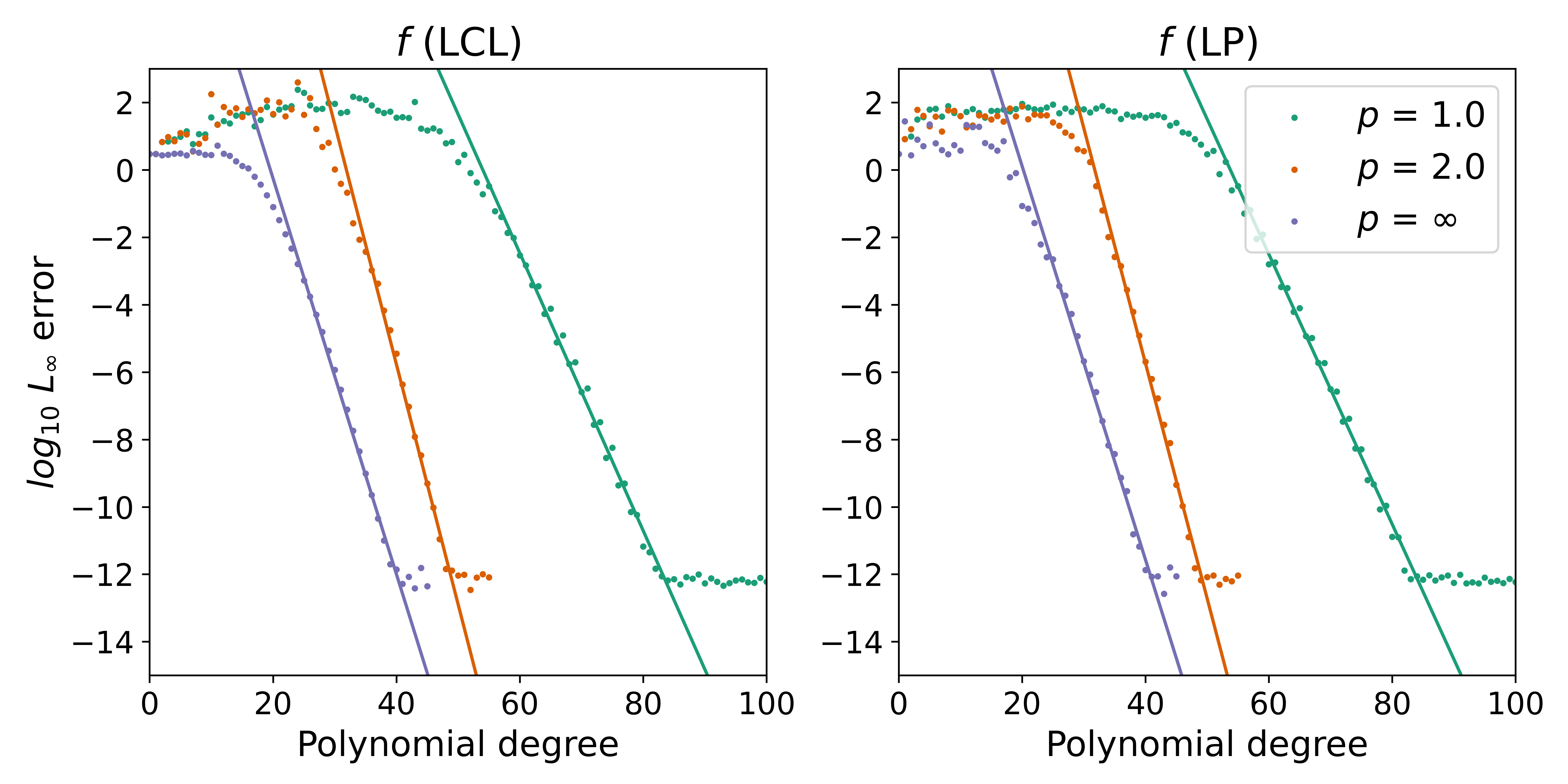}
        \caption{\ref{F5} in dimension $m =3$ with $k_1 = 1$, $k_2 = 5$}     \label{F5:b}
    \end{subfigure}

    \vspace{1em}
    \begin{subfigure}[b]{0.48\linewidth}
        \centering
        \includegraphics[scale=0.3]{./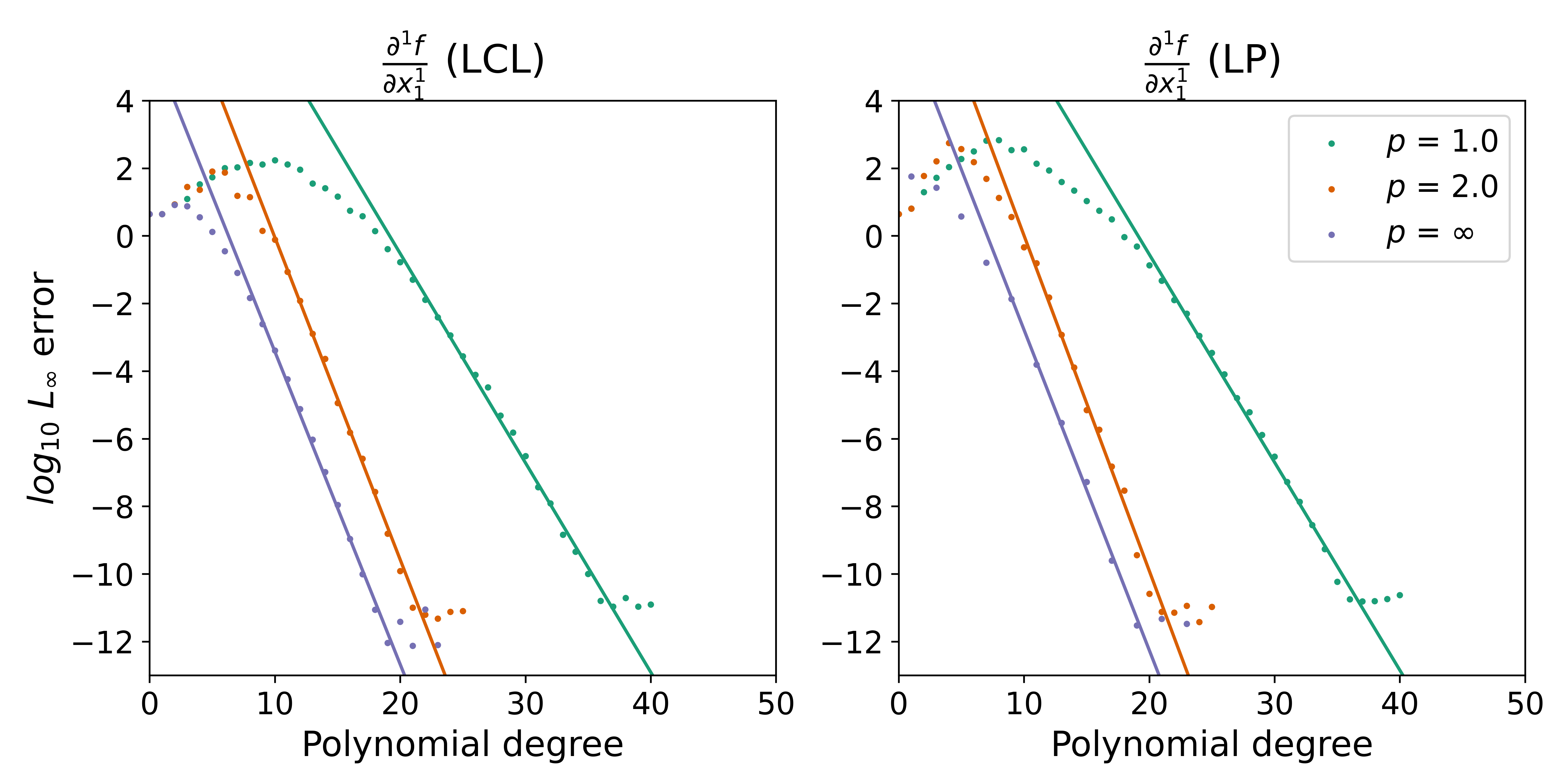}
        \caption{$1$st derivative
        of \ref{F5} in dimension $m =4$ with $k_1 = k_2 = 1$}
    \end{subfigure}%
    \hfill
    \begin{subfigure}[b]{0.48\linewidth}
        \centering
        \includegraphics[scale=0.3]{./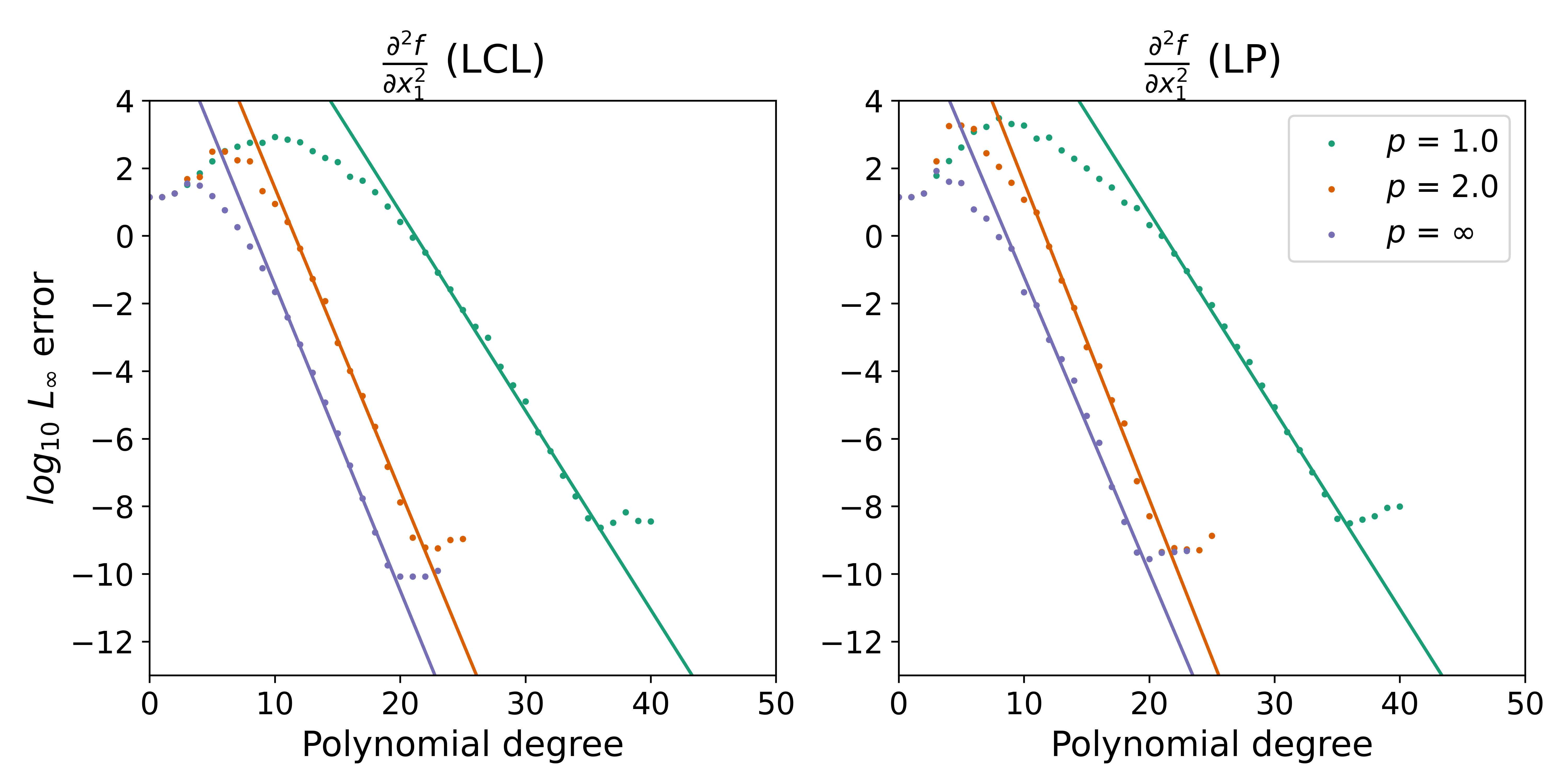}
        \caption{$2$nd derivative \ref{F5} in dimension $m =2$ with $k_1 = k_2 = 1$}
    \end{subfigure}

    \caption{Approximation errors of the LCL- and LP-node interpolants of \ref{F5} and their $1$st and $2$nd derivatives for $p=1,2,\infty$. Solid lines show the fitted rates from Tables~\ref{tab:F5} and\ref{tab:diff-f5}.}
    \label{ratesF5}
\end{figure}

\begin{table}[!t]
    \centering
    \begin{tabular}{c|c|cc|cc|cc}
    \toprule

    \multirow{2}{*}{$k_1$,$k_2$} &
    \multirow{2}{*}{$p$} &
    \multicolumn{2}{c|}{$m = 2$} &
    \multicolumn{2}{c|}{$m = 3$} &
    \multicolumn{2}{c}{$m = 4$} \\
    \cline{3-8}

    &
    &
    LCL &
    LP &
    LCL &
    LP &
    LCL &
    LP \\
    \midrule

         & $1.0$    & $6.275$ & $6.432$ & $5.271$ & $5.226$  & $4.377$  & $4.675$ \\
$1$, $1$ & $2.0$    & $9.595$ & $9.511$ & $9.849$ & $10.765$ & $11.643$ & $11.359$ \\
         & $\infty$ & $9.603$ & $9.480$ & $9.325$ & $9.139$  & $8.950$  & $8.590$ \\
\midrule
                                         & $1.0$ & $3.879$ & $4.029$ & $3.275$ & $3.120$  & $2.878$ & $2.739$ \\
$3$, $3$ & $2.0$    & $6.357$ & $6.528$ & $7.239$ & $7.020$  & $8.234$ & $8.162$ \\
                                         & $\infty$  & $5.411$ & $5.482$ & $5.034$ & $5.006$  & $5.233$ & $5.102$ \\
\midrule
                                         & $1.0$    & $3.105$ & $3.036$ & $2.673$ & $2.605$  & $2.461$ & $2.486$ \\
$5$, $5$ & $2.0$     & $4.611$ & $4.650$ & $5.342$ & $5.400$  & $5.741$ & $5.668$ \\
                                         & $\infty$  & $3.929$ & $3.865$ & $3.930$ & $3.947$  & $4.017$ & $3.997$ \\
\midrule
         & $1.0$    & $2.922$ & $2.759$ & $2.581$ & $2.516$  & $2.468$ & $2.494$ \\
$1$, $5$ & $2.0$    & $4.596$ & $4.425$ & $5.151$ & $4.987$  & $5.889$ & $5.671$ \\
         & $\infty$ & $3.958$ & $4.027$ & $3.865$ & $3.837$  & $4.024$ & $3.984$ \\
 \bottomrule
 \end{tabular}
 \caption{Fitted approximation rates of LCL-node and LP-node interpolants of the function \ref{F5} for different $k_1$, $k_2$, $p$, and dimensions $m$.\label{tab:F5}}
\end{table}

\begin{table}[!t]
    \centering
    \begin{tabular}{c|cc|cc}
    \toprule
    \multirow{2}{*}{$p$} &
    \multicolumn{2}{c}{$\frac{\partial f}{\partial x_1}$} &
    \multicolumn{2}{c}{$\frac{\partial^2 f}{\partial x_1^2}$} \\
    \cline{2-5}
    &
    LCL &
    LP &
    LCL &
    LP \\
    \midrule


$1.0$ &
$4.161$ &
$4.120$ &
$3.876$ &
$3.859$ \\

$2.0$ &
$8.968$ &
$9.816$ &
$7.866$ &
$8.671$ \\

$\infty$ &
$8.406$ &
$8.866$ &
$8.015$ &
$7.496$ \\
 \bottomrule
 \end{tabular}
 \caption{Fitted approximation rates of the $1$st and $2$nd derivatives of LCL-node and LP-node interpolants of the function \ref{F5} for different $p$ in dimension $m = 4$ for $k_1 = k_2 = 1$.}
 \label{tab:diff-f5}
\end{table}

\section{Conclusion}\label{sec:Conclusion}
We have proven and numerically demonstrated that Bos--Levenberg--Trefethen functions can be optimally approximated by multivariate Newton interpolation in downward-closed spaces in non-tensorial nodes, achieving geometric rates. In particular, the Euclidean-degree case mitigates the curse of dimensionality for interpolation tasks. By maintaining both efficiency and approximation power, even for the derivatives of the interpolants, this might establish a new standard in spectral methods for regular partial differential equations (PDEs), ordinary differential equations (ODEs), and signal-processing problems.

We also presented an algorithm to practically compute the described interpolants in quadratic time $\Oc(N^2)$ and linear storage $\Oc(N)$, where $N = \dim \Pi_A$ is the dimension of the  downward-closed space. We provided an open-source implementation of the algorithm in Python as the {\sc minterpy} package, and we validated and benchmarked it in numerical experiments.

We believe, however, that the present algorithm is not yet optimal and see potential for further reducing its time complexity to $\Oc(N m n)$. This would render it even faster than tensorial \emph{Fast Fourier Transform}, which has a complexity of $\Oc(M\log(M))$, $M = (n+1)^m \gg N$. Then, $N \ll M$ holds for a degree range $1 \leq n \leq R$, where $R>1$ is growing with dimension $m$. Realizing such a \emph{Fast Newton Transform} is the focus of our future work.


\section*{Acknowledgements}
We deeply acknowledge Leslie Greengard, Albert Cohen, Christian L.~M\"{u}ller, Alex Barnett, Manas Rachh, Heide Meissner, Uwe Hernandez Acosta, and Nico Hoffmann for many inspiring comments and helpful discussions.
We are grateful to Michael Bussmann and CASUS (G\"{o}rlitz, Germany) for hosting stimulating workshops on the subject.

This work was partially funded by the Center of Advanced Systems Understanding (CASUS), financed by Germany's Federal Ministry of Education and Research (BMBF) and by the Saxon Ministry for Science, Culture and Tourism (SMWK) with tax funds on the basis of the budget approved by the Saxon State Parliament.

\bibliographystyle{IMANUM-BIB}
\bibliography{Ref.bib}

\begin{thebibliography}{}

\bibitem[Andrievskii \& Nazarov(2022)Andrievskii \& Nazarov]{LejaLEB}
{\sc Andrievskii, V. \& Nazarov, F.} (2022)
\newblock A simple upper bound for lebesgue constants associated with leja
  points on the real line.
\newblock {\em Journal of Approximation Theory\/}, {\bf 275}, 105699.

\bibitem[Beck {\em et~al.}(2014)Beck, Nobile, Tamellini, \&
  Tempone]{Nobile2014}
{\sc Beck, J., Nobile, F., Tamellini, L. \& Tempone, R.} (2014)
\newblock Convergence of quasi-optimal stochastic galerkin methods for a class
  of pdes with random coefficients.
\newblock {\em Computers \& Mathematics with Applications\/}, {\bf 67},
  732--751.
\newblock High-order Finite Element Approximation for Partial Differential
  Equations.

\bibitem[Bernstein(1912)Bernstein]{bernstein1912}
{\sc Bernstein, S.} (1912)
\newblock {\em Sur l'ordre de la meilleure approximation des fonctions
  continues par des polyn{\^o}mes de degr{\'e} donn{\'e}\/},  vol.~4.
\newblock Hayez, imprimeur des acad{\'e}mies royales.

\bibitem[Bernstein(1914)Bernstein]{bernstein1914}
{\sc Bernstein, S.} (1914)
\newblock Sur la meilleure approximation de $|x|$ par des polynomes de
  degr{\'e}s donn{\'e}s.
\newblock {\em Acta Mathematica\/}, {\bf 37}, 1--57.

\bibitem[Berrut \& Trefethen(2004)Berrut \& Trefethen]{berrut}
{\sc Berrut, J.-P. \& Trefethen, L.~N.} (2004)
\newblock Barycentric {L}agrange interpolation.
\newblock {\em SIAM review\/}, {\bf 46}, 501--517.

\bibitem[Bos {\em et~al.}(2010)Bos, De~Marchi, Sommariva, \& Vianello]{Bos2010}
{\sc Bos, L., De~Marchi, S., Sommariva, A. \& Vianello, M.} (2010)
\newblock Computing multivariate {F}ekete and {L}eja points by numerical linear
  algebra.
\newblock {\em SIAM Journal on Numerical Analysis\/}, {\bf 48}, 1984--1999.

\bibitem[Bos \& Levenberg(2018)Bos \& Levenberg]{bos2018bernstein}
{\sc Bos, L. \& Levenberg, N.} (2018)
\newblock {B}ernstein-{W}alsh theory associated to convex bodies and
  applications to multivariate approximation theory.
\newblock {\em Computational Methods and Function Theory\/}, {\bf 18},
  361--388.

\bibitem[Brutman(1996)Brutman]{brutman2}
{\sc Brutman, L.} (1996)
\newblock Lebesgue functions for polynomial interpolation -- a survey.
\newblock {\em Annals of Numerical Mathematics\/}, {\bf 4}, 111--128.

\bibitem[Chkifa {\em et~al.}(2014)Chkifa, Cohen, \& Schwab]{cohen2}
{\sc Chkifa, A., Cohen, A. \& Schwab, C.} (2014)
\newblock High-dimensional adaptive sparse polynomial interpolation and
  applications to parametric pdes.
\newblock {\em Foundations of Computational Mathematics\/}, {\bf 14}, 601--633.

\bibitem[Chkifa(2013)Chkifa]{chifka2013}
{\sc Chkifa, M.~A.} (2013)
\newblock On the {L}ebesgue constant of {L}eja sequences for the complex unit
  disk and of their real projection.
\newblock {\em Journal of Approximation Theory\/}, {\bf 166}, 176--200.

\bibitem[Cohen \& Migliorati(2018)Cohen \& Migliorati]{cohen3}
{\sc Cohen, A. \& Migliorati, G.} (2018)
\newblock Multivariate approximation in downward closed polynomial spaces.
\newblock {\em Contemporary Computational Mathematics-A celebration of the 80th
  birthday of Ian Sloan\/}.
\newblock Springer, pp. 233--282.

\bibitem[DeVore {\em et~al.}(1989)DeVore, Howard, \& Micchelli]{Devore1989}
{\sc DeVore, R.~A., Howard, R.~A. \& Micchelli, C.~A.} (1989)
\newblock Optimal non-linear approximation.
\newblock {\em Manuscripta Mathematica\/}, {\bf 63}, 469--478.

\bibitem[Driscoll {\em et~al.}(2014)Driscoll, Hale, \& Trefethen]{chebfun}
{\sc Driscoll, T.~A., Hale, N. \& Trefethen, L.~N.} (2014)
\newblock Chebfun guide.
\newblock {\em Pafnuty Publications, Oxford\/}.

\bibitem[Dyn \& Floater(2014)Dyn \& Floater]{dyn}
{\sc Dyn, N. \& Floater, M.~S.} (2014)
\newblock Multivariate polynomial interpolation on lower sets.
\newblock {\em Journal of Approximation Theory\/}, {\bf 177}, 34--42.

\bibitem[Faber(1914)Faber]{faber}
{\sc Faber, G.} (1914)
\newblock {\"U}ber die interpolatorische {D}arstellung stetiger {F}unktionen.
\newblock {\em Jber. Deutsch. Math. Verein\/}, {\bf 23}, 192--210.

\bibitem[Fekete(1923)Fekete]{Fekete1923}
{\sc Fekete, M.} (1923)
\newblock Über die verteilung der wurzeln bei gewissen algebraischen
  glei-chungen mit ganzzahligen koeffizienten.
\newblock {\em Mathematische Zeitschrift\/}, {\bf 17}, 228--249.

\bibitem[Floater \& Hormann(2007)Floater \& Hormann]{floater}
{\sc Floater, M.~S. \& Hormann, K.} (2007)
\newblock Barycentric rational interpolation with no poles and high rates of
  approximation.
\newblock {\em Numerische Mathematik\/}, {\bf 107}, 315--331.

\bibitem[Gasca \& Maeztu(1982)Gasca \& Maeztu]{Gasca}
{\sc Gasca, M. \& Maeztu, J.~I.} (1982)
\newblock On {L}agrange and {H}ermite interpolation in $\mathbb{R}^k$.
\newblock {\em Numerische Mathematik\/}, {\bf 39}, 1--14.

\bibitem[Gaure(2018)Gaure]{gaure}
{\sc Gaure, S.} (2018)
\newblock Usage notes for package chebpol,
  https://cran.r-project.org/package=chebpol.

\bibitem[Gautschi(2011)Gautschi]{gautschi}
{\sc Gautschi, W.} (2011)
\newblock {\em Numerical analysis\/}.
\newblock Springer Science \& Business Media.

\bibitem[Griebel \& Oettershagen(2016)Griebel \&
  Oettershagen]{griebel2016tensor}
{\sc Griebel, M. \& Oettershagen, J.} (2016)
\newblock On tensor product approximation of analytic functions.
\newblock {\em Journal of Approximation Theory\/}, {\bf 207}, 348--379.

\bibitem[Guenther \& Roetman(1970)Guenther \& Roetman]{Guenther}
{\sc Guenther, R.~B. \& Roetman, E.~L.} (1970)
\newblock Some observations on interpolation in higher dimensions.
\newblock {\em Math. Comput.}, {\bf 24}, 517--522.

\bibitem[Hecht {\em et~al.}(2017)Hecht, Cheeseman, Hoffmann, \&
  Sbalzarini]{PIP1}
{\sc Hecht, M., Cheeseman, B.~L., Hoffmann, K.~B. \& Sbalzarini, I.~F.} (2017)
\newblock A quadratic-time algorithm for general multivariate polynomial
  interpolation.
\newblock {\em arXiv preprint arXiv:1710.10846\/}.

\bibitem[Hecht {\em et~al.}(2018)Hecht, Hoffmann, Cheeseman, \&
  Sbalzarini]{PIP2}
{\sc Hecht, M., Hoffmann, K.~B., Cheeseman, B.~L. \& Sbalzarini, I.~F.} (2018)
\newblock Multivariate {N}ewton interpolation.
\newblock {\em arXiv preprint arXiv:1812.04256\/}.

\bibitem[Hecht {\em et~al.}(2020)Hecht, Gonciarz, Michelfeit, Sivkin, \&
  Sbalzarini]{MIP}
{\sc Hecht, M., Gonciarz, K., Michelfeit, J., Sivkin, V. \& Sbalzarini, I.~F.}
  (2020)
\newblock Multivariate interpolation in unisolvent nodes--lifting the curse of
  dimensionality.
\newblock {\em arXiv preprint arXiv:2010.10824\/}.

\bibitem[Hecht \& Sbalzarini(2018)Hecht \& Sbalzarini]{IEEE}
{\sc Hecht, M. \& Sbalzarini, I.~F.} (2018)
\newblock Fast interpolation and {F}ourier transform in high-dimensional
  spaces.
\newblock {\em Intelligent Computing. Proc. 2018 IEEE Computing Conf., Vol.
  2,\/} (K.~Arai, S.~Kapoor \& R.~Bhatia eds). Advances in Intelligent Systems
  and Computing, vol. 857.
\newblock London, UK: Springer Nature, pp. 53--75.

\bibitem[K.~C.~Chung(1977)K.~C.~Chung]{Chung}
{\sc K.~C.~Chung, T. H.~Y.} (1977)
\newblock On lattices admitting unique {L}agrange interpolations.
\newblock {\em SIAM Journal on Numerical Analysis\/}, {\bf 14}, 735--743.

\bibitem[Kuntzmann(1960)Kuntzmann]{kuntz}
{\sc Kuntzmann, J.} (1960)
\newblock M{\'e}thodes num{\'e}riques. interpolation. d{\'e}riv{\'e}es.
\newblock {\em {D}unod Editeur, Paris\/}.

\bibitem[Leja(1957)Leja]{leja}
{\sc Leja, F.} (1957)
\newblock Sur certaines suites li{\'e}es aux ensembles plans et leur
  application {\`a} la repr{\'e}sentation conforme.
\newblock {\em Annales Polonici Mathematici\/}, vol. 1.
\newblock Instytut Matematyczny Polskiej Akademi Nauk, pp. 8--13.

\bibitem[Markov(1889)Markov]{markov1889problem}
{\sc Markov, A.} (1889)
\newblock On a problem of di mendeleev.
\newblock {\em Zap. Im. Akad. Nauk\/}, {\bf 62}, 1--24.

\bibitem[Mason(1980)Mason]{mason1980}
{\sc Mason, J.} (1980)
\newblock Near-best multivariate approximation by {F}ourier series, {Ch}ebyshev
  series and {Ch}ebyshev interpolation.
\newblock {\em Journal of Approximation Theory\/}, {\bf 28}, 349--358.

\bibitem[Meijering(2002)Meijering]{LIP}
{\sc Meijering, E.} (2002)
\newblock A chronology of interpolation: {F}rom ancient astronomy to modern
  signal and image processing.
\newblock {\em Proceedings of the {IEEE}\/}, {\bf 90}, 319--342.

\bibitem[Michelfeit(2020)Michelfeit]{Jannik}
{\sc Michelfeit, J.} (2020)
\newblock multivar\_horner: A python package for computing {H}orner
  factorisations of multivariate polynomials.
\newblock {\em Journal of Open Source Software\/}, {\bf 5}, 2392.

\bibitem[M\"uhlbach {\em et~al.}(1976)M\"uhlbach {\em et~al.}]{neville}
{\sc M\"uhlbach, G. {\em et~al.}} (1976)
\newblock Neville-{A}itken algorithms for interpolation by functions of
  {C}hebychev-systems in the sense of {N}ewton and in a generalized sense of
  {H}ermite.
\newblock {\em Theor. Approximation Appl. Conf. Proc.; Calgary; New York;
  Academic Press; pp. 200-212; Bibl. 4 Ref.}

\bibitem[Narayan \& Jakeman(2014)Narayan \& Jakeman]{Narayan2014}
{\sc Narayan, A. \& Jakeman, J.~D.} (2014)
\newblock Adaptive leja sparse grid constructions for stochastic collocation
  and high-dimensional approximation.
\newblock {\em SIAM Journal on Scientific Computing\/}, {\bf 36}, A2952--A2983.

\bibitem[Neidinger(2019)Neidinger]{neidinger2019}
{\sc Neidinger, R.~D.} (2019)
\newblock Multivariate polynomial interpolation in newton forms.
\newblock {\em SIAM Review\/}, {\bf 61}, 361--381.

\bibitem[Novak \& Wo{\'z}niakowski(2010)Novak \& Wo{\'z}niakowski]{novak2}
{\sc Novak, E. \& Wo{\'z}niakowski, H.} (2010)
\newblock {\em Tractability of multivariate problems. Vol. 2: Standard
  Information for Functionals\/},  vol.~12.
\newblock European Mathematical Society, EMS Tracts in Mathematics.

\bibitem[Sauer(2004)Sauer]{sauertens}
{\sc Sauer, T.} (2004)
\newblock Lagrange interpolation on subgrids of tensor product grids.
\newblock {\em Mathematics of Computation\/}, {\bf 73}, 181--190.

\bibitem[Sauer \& Xu(1995)Sauer \& Xu]{sauerL}
{\sc Sauer, T. \& Xu, Y.} (1995)
\newblock On multivariate {L}agrange interpolation.
\newblock {\em Mathematics of computation\/}, {\bf 64}, 1147--1170.

\bibitem[Stoer {\em et~al.}(2002)Stoer, Bulirsch, Bartels, Gautschi, \&
  Witzgall]{Stoer}
{\sc Stoer, J., Bulirsch, R., Bartels, R.~H., Gautschi, W. \& Witzgall, C.}
  (2002)
\newblock {\em Introduction to numerical analysis\/}.
\newblock Texts in applied mathematics.
\newblock New York: Springer.

\bibitem[Tal-Ezer(1988)Tal-Ezer]{tal1988high}
{\sc Tal-Ezer, H.} (1988)
\newblock High degree interpolation polynomial in {N}ewton form.
\newblock {\em Contractor Report 181677, {ICASE} report No. 88-39\/}.
\newblock NASA Langley Research Center.

\bibitem[Taylor \& Totik(2008)Taylor \& Totik]{Taylor2008}
{\sc Taylor, R. \& Totik, V.} (2008)
\newblock Lebesgue constants for leja points.
\newblock {\em IMA Journal of Numerical Analysis\/}, {\bf 30}, 462--486.

\bibitem[Trefethen(2017)Trefethen]{trefethen2017a}
{\sc Trefethen, L.~N.} (2017)
\newblock Multivariate polynomial approximation in the hypercube.
\newblock {\em Proceedings of the American Mathematical Society\/}, {\bf 145},
  4837--4844.

\bibitem[Trefethen(2019)Trefethen]{trefethen2019}
{\sc Trefethen, L.~N.} (2019)
\newblock {\em Approximation theory and approximation practice\/},  vol. 164.
\newblock SIAM.

\bibitem[Weierstrass(1885)Weierstrass]{weier1}
{\sc Weierstrass, K.} (1885)
\newblock {\"U}ber die analytische {D}arstellbarkeit sogenannter
  willk{\"u}rlicher {F}unktionen einer reellen {V}er{\"a}nderlichen.
\newblock {\em Sitzungsberichte der K{\"o}niglich Preu{\ss}ischen Akademie der
  Wissenschaften zu Berlin\/}, {\bf 2}, 633--639.

\bibitem[Wicaksono {\em et~al.}(2023)Wicaksono, Hernandez~Acosta,
  Thekke~Veettil, Michelfeit, \& Hecht]{minterpy}
{\sc Wicaksono, D.~C., Hernandez~Acosta, U., Thekke~Veettil, S.~K., Michelfeit,
  J. \& Hecht, M.} (2023)
\newblock Minterpy - multivariate polynomial interpolation (version
  0.2.0-alpha).
\newblock {\em {R}odare: http://doi.org/10.14278/rodare.2062, {G}it{H}ub:
  https://github.com/minterpy-project/minterpy\/}.

\bibitem[Zavalani {\em et~al.}(2023)Zavalani, Sander, \& Hecht]{HOSQ}
{\sc Zavalani, G., Sander, O. \& Hecht, M.} (2023)
\newblock High-order integration on regular triangulated manifolds reaches
  super-algebraic approximation rates through cubical re-parameterizations.
\newblock {\em arXiv preprint arXiv:2311.13909\/}.

\end{thebibliography}
\clearpage

\appendix

\end{document}